\newcommand{\I}{\mathrm{i}}
\newcommand{\dd}{\mathrm{d}}
\newcommand{\ee}{\mathrm{e}}
\newcommand{\R}{{\mathbb R}}
\newcommand{\C}{{\mathbb C}}
\newcommand{\Z}{{\mathbb Z}}
\newcommand{\Tr}{{\operatorname{Tr}\,}}
\newcommand{\half}{{\frac{1}{2}}}
\newcommand{\supp}{{\operatorname{Supp\,}}}
\renewcommand{\phi}{\varphi}
\newcommand{\acal}{\mathcal{A}}
\newcommand{\ccal}{\mathcal{C}}
\newcommand{\hcal}{\mathcal{H}}
\newcommand{\lcal}{\mathcal{L}}
\newcommand{\pcal}{\mathcal{P}}
\newcommand{\ncal}{\mathcal{N}}
\newcommand{\rcal}{\mathcal{R}}
\newcommand{\corri}[1]{#1}
\newcommand{\Poincare}{Poincar{\'e} }
\newcommand{\FE}{\mathrm{FE}}
\newcommand{\rmi}{\mathrm{i}}
\newtheorem{theo}{Theorem}[section]
 \newtheorem{lem}[theo]{Lemma}
 \newtheorem{cor}[theo]{Corollary}
 \newtheorem{prop}[theo]{Proposition}
 \theoremstyle{definition}
 \newtheorem{defin}[theo]{Definition}
 \newtheorem{example}[theo]{Example}
 \newtheorem{rem}[theo]{Remark}
\title{A Gutzwiller trace  formula for stationary space-times}
\author{Alexander Strohmaier}
\author{Steve Zelditch}
\address{School of Mathematics \\
University of Leeds\\
Leeds, LS2 9JT, UK}
\email{a.strohmaier@leeds.ac.uk}
\address{Department of Mathematics, Northwestern  University,
Evanston, IL 60208-2370, USA} \email{
zelditch@math.northwestern.edu}
\thanks{Research partially supported by  NSF grant DMS-1810747.}
\begin{document}
\maketitle

\begin{abstract}
 We give a relativistic generalization of  the Gutzwiller-Duistermaat-Guillemin trace formula for the wave group of  a compact Riemannian manifold  to globally hyperbolic  stationary space-times with compact Cauchy hypersurfaces. We introduce several (essentially equivalent) notions of trace of self-adjoint operators  
 on the null-space $\ker \Box$ of the wave operator  and define $U(t)$ to be translation by the flow $e^{t Z}$ of the timelike Killing vector field $Z$ on $\Box$. The spectrum of $Z$ on $\ker \Box$ is discrete and the singularities
 of $\rm{Tr} \ee^{t Z} |_{\ker \Box}$ occur at periods of periodic orbits of $\exp t Z$ on the symplectic manifold of null geodesics. The trace formula gives
 a Weyl law for the eigenvalues of $Z$ on $\ker \Box$.
\end{abstract}

{\small
\tableofcontents}

\section{Introduction}

For a closed connected Riemannian manifold $(\Sigma,h)$ the
distribution traces of the wave groups
%$U(t) = \ee^{\rmi t \sqrt{-\Delta}}$ 
\begin{equation} \label{U(t)} U(t) := \begin{pmatrix} \cos t \sqrt{-\Delta} & \frac{\sin t \sqrt{\Delta}}{\sqrt{-\Delta}} \\ & \\
-\sqrt{-\Delta} \sin t \sqrt{-\Delta} & \cos t \sqrt{-\Delta}
\end{pmatrix},\;\; {\rm and}\;\; V(t) : = \exp (\rmi t \sqrt{-\Delta})
\end{equation} are
$$\mathrm{Tr} \; U (t) = 2 \sum_{ j}  \cos \lambda_j t
= 2 \Re \sum_j \ee^{\rmi t \lambda_j}, \;\; {\mathrm{resp.}}\;\;  \mathrm{Tr} \; V (t) = \sum_j \ee^{\rmi t \lambda_j},
$$
where $(\lambda_j)_{j \in \mathbb{N}_0}$ is the non-decreasing sequence of eigenvalues of $\sqrt{-\Delta}$ repeated according to their multiplicities.
%Hence,  $Sp(\sqrt{-\Delta}) = \{ \lambda_0 = 0 < \lambda_1 \leq
%\lambda_2 \leq \dots \}$ is the spectrum of $\sqrt{-\Delta}.$ 
It was proved by Chazarain \cite{Ch74} and by Duistermaat-Guillemin \cite{DG75} that  the  positive singular points of $\Tr U (t)$ occur when $t$ lies in the length spectrum $\mathrm{Lsp}(\Sigma,h)$,
i.e. the set of lengths $L_{\gamma}$ of closed geodesics $\gamma$. The analysis of the singularity at $t=0$ was carried out by H\"ormander in \cite{Ho2} and famously gives rise to the Weyl law with its sharp remainder estimate.
As reviewed in Section \ref{DG-Product}, when the closed geodesics are non-degenerate, 
$\Tr V(t)$ admits a singularity expansion around $0<t = L_{\gamma}$ of the form  
\begin{equation} \label{TrUt} \Tr V (t) \sim  a_{\gamma, -1}  (t - L_{\gamma} + \rmi 0)^{-1}, \;\; \rm{with}\;
  a_{\gamma, -1} = \corri{\frac{1}{2 \pi \rmi}}
\frac{\ee^{-\frac {\rmi \pi }{2} m_\gamma }L_\gamma^{\#}}{|\det(\mathrm{id}-P_{\gamma})|^{\half}}, \end{equation} where  $P_\gamma$ is the  linear
Poincar\'e map, $L_{\gamma}$ is the length of $\gamma$, $L_{\gamma}^{\#}$ is the primitive period and $m_{\gamma}$ is the Maslov index (discussed below). There is a dual semi-classical expansion for the Fourier transform of $\Tr V(t)$ which preceded the rigorous mathematical work, and this expansion is often called the Gutzwiller trace formula \cite{Gutz71, M92}
or the Poisson relation.\footnote{ We use the first term to avoid confusion with Poisson integral formulae.}

As described above, the formula  is manifestly non-relativisitic.
The propagator $V(t)$ is a solution operator  for the homogeneous wave equation $\Box
u = 0,$  and both  $V(t)$ and its generator $\sqrt{-\Delta}$ are non-relativistic;  their generalizations to Schr\"odinger equations in \cite{Gutz71} is a branch of non-relativistic quantum mechanics. Second, the geometry of the terms of the singularity trace formula is the Riemannian geometry of $(\Sigma,h)$ and the symplectic geometry of the geodesic flow $G^t$ on the unit cosphere bundle $S^* \Sigma$.  
%But the  Gutzwiller trace formula can be phrased  in relativistic terms and  
% generalized  to stationary Lorentzian  spacetimes $M$. It appears that %many other  results of non-relativistic quantum mechanics can be similarly %generalized to the relativistic setting. For example, if $e^{t Z}$ acts %ergodically on $\ncal_p$ then one can prove a relativistic quantum %ergodicity theorem for the eigenfunctions of $Z$ in $\ker \Box.$
The purpose of this article is to prove a relativistic Gutzwiller trace formula
for globally hyperbolic stationary spacetimes $(M, g)$, i.e. globally hyperbolic spacetimes with a complete timelike Killing vector field $Z$. The main result, Theorem \ref{GUTZTH}, is a singularities trace formula for the trace of  the  translation operator by the flow $ \ee^{t Z}$ of the Killing vector field $Z$ acting on the nullspace $\ker \Box$ of a wave operator $\Box$. On the classical level, the Killing flow acts on the contact manifold $\ncal_p$ of unparametrized null-geodesics of $(M, g)$, and the singular times are the periods of periodic orbits of $e^{t Z}$ on $\ncal_p$.   As a corollary, one gets a Weyl counting formula (Corollary \ref{WEYLCOR}) with error estimate for the number of eigenvalues $\leq T$ of the Killing 
vector field on $\ker \Box$. One can also consider conformal timelike Killing vector fields acting on the null space of the conformal d'Alembert operator. However, this case can easily be reduced to the one we consider by a conformal change of the metric.

 \subsection{Statement of results}

%We assume thoughout that there exists a
%time-like Killing vector field $Z$. This is a direct
%generalization of the original product case,  where the
%Killing vector field is $Z = \frac{\partial}{\partial t}$. For simplicity, we
%assume there exists  a compact spacelike cross-section $\Sigma$, although it is possible to relax this assumption as in the original product case. We do not
%assume that $(M, g)$ satisfies the Einstein equations, just as
%that is not assumed in the Riemannian setting. In addition to the scalar
%propagator we are interested in the Dirac equation.

We will assume that $(M,g)$ is a spatially compact globally hyperbolic stationary space-time of dimension $n$.
Let $\Box_g$ denote the d'Alembert operator that is given in local coordinates as
$$
 \Box_g = -\frac{1}{\sqrt{|g|}} \partial_i \left(\sqrt{|g|} g^{ik}\partial_k \right),
$$
where we have used Einstein's sum convention.
Let $Z$ be the associated timelike Killing vector field. We can think of $Z$
as a first order differential operator that coincides with its Lie derivative $\lcal_Z$ on functions. Put $D_Z =
\frac{1}{i} \lcal_Z$.
More generally we consider a potential $V \in C^\infty(M)$ with $D_Z V=0$ and the operator
\begin{gather}
 \Box = \Box_g + V.
\end{gather}
This includes interesting examples of the form $\Box = \Box_g + m^2 + \kappa R$ where $\kappa,m \in \R$ and $R$ denotes the scalar curvature. Then, since we assumed that $D_Z V=0$, the operator $D_Z$ commutes with $\Box$:
\begin{equation} \label{COMMUTE} [D_Z, \Box] = 0.\end{equation}
Denote by $\Psi^t  = \exp t Z$ the Killing flow generated by $Z$. This flow acts on functions $u$ by pull-back $\Psi^t u = (\Psi^t)^* u = \ee^{\rmi t D_Z} u$.
Then $$D_Z: \ker \Box \to \ker \Box, \;\; {\rm and}\;\; \Psi_t:   \ker \Box \to \ker \Box $$  where $\ker \Box$ is the solution space of $\Box u =
0$  on $M.$ 

The eigenfunctions of $D_Z$ in $\ker \Box$ are joint eigenfunctions,
$$\left\{ \begin{array}{l}  \Box u = 0, \\ \\  D_Z u = \lambda u. \end{array} \right. $$
We will show that the spectrum $\mathrm{Sp}(\Box,D_Z)$ consists of at most finitely many non-real eigenvalues and an infinite discrete set of real eigenvalues (Theorem \ref{ponteigs}). The spectrum is also symmetric with respect to complex conjugation
and reflection at the origin.
By elliptic regularity the eigenfunctions are smooth even when considered on the space of distributional solutions. 
We can therefore, without any loss of generality, consider the eigenvalue problem as posed on the space of smooth solutions of the wave equation.

The first issue in defining a relativistic trace formula is to define a suitable
notion of trace of operators on $\ker \Box$. This would seem to require  an inner product on $\ker \Box$. 
In fact, it is sufficient to  define a Hilbert space topology on $\ker \Box$ since the trace is the same for equivalent Hilbert inner products. 
Suitable Hilbert space topologies induced from finite energy spaces are
discussed in Section \ref{kerBoxSect}, and following this we endow $\ker \Box$ with the topology of a Hilbert space.
The resulting trace, 
 \begin{equation} \label{TRD} \Tr U(t) = \Tr  \ee^{\rmi t D_Z} |_{\ker \Box} \end{equation} 
 is defined in Section \ref{WTSECT}  (see \eqref{TRUtDEF}).  Note that in general it is not possible to define an inner product that is both positive definite and invariant under the Killing flow. Using the stress energy tensor one can however define an invariant non-definite sesquilinear form on $\ker \Box$ (see Section \ref{ESECT}). This energy form is natural and invariant under the Killing flow. The completion (after possibly dividing out a null space) leads to a Pontryagin space. One can then use spectral theory on Pontryagin spaces to analyse the generator of the Killing flow. The appearance of Krein and Pontryagin spaces in the analysis of the Klein-Gordon operator has been noticed before in slightly different contexts (\cite{LNT06,LNT08,GGH17}).
    
Other types of inner products on $\ker \Box$ have been studied in many papers on quantum field theory on a curved space time.  $\ker \Box$ is naturally a symplectic vector space, so a compatible inner product is defined once a (linear) complex structure $J$ is defined.  A complex structure is equivalent to splitting $\ker \Box \otimes \C$ into $\pm \rmi$ eigenspaces of $J$. In quantum field theory this is for example achieved by a frequency splitting procedure giving rise to a vacuum-state.
 There is a natural definition in the stationary case using the positive/negative eigenvalues of $D_Z$. Such frequency splitting procedures are known to give rise to Hadamard states. In Section
  \ref{Hadamard} we explain the relation between Hadamard states and complex structures. Using the mode expansion we show that any invariant complex structure leads to a Hadamard state. In particular this means that any invariant pure quasifree state is a Hadamard state.

The next issue is to find analogues for the geodesic flow, periodic orbits and the symplectic geometry of the Poincar\'e map. 
 From the viewpoint of geometric
  quantization, the Hilbert space $\ker \Box$ is a quantization
  of the space $\ncal$ of the null-bicharacteristics of $\Box$. The Hamiltonian of the null-bicharacteristic
  flow is the energy function $$\frac{1}{2}\sigma_{\Box}(x, \xi) = \frac{1}{2} |\xi|_g^2, $$
  where $|\xi|_g^2$ is the Lorentzian `norm' squared. We  use the factor $\frac{1}{2}$ here to make sure that this flow is identical to the geodesic flow $G^t$ on the cotangent bundle.
  To be more precise, let $\mbox{Char}(\Box) = \{(x, \xi) \in T^* M \setminus 0: \sigma_{\Box}(x, \xi) = 0 \}$.  We denote the restriction of $G^t$ 
  to ${\rm Char}(\Box)$ by $G^t_0$.  $\mbox{Char}(\Box)$  is a (co-isotropic)  hypersurface whose null-foliation consists of orbits of $G^t_0$, i.e. of scaled null-geodesics (see Sec. \ref{sec1.1} for a definition).
The space $\ncal$, of scaled null-geodesics, is naturally a non-compact symplectic manifold.

    %We note that  $\mbox{Char}(\Box)$ is $7$ dimensional and the space
%of geodesics is a $6$-dimensional symplectic manifold. In the
%product case, where the symplectic quotient is $T^* M$, the
%quantization should be $L^2(M, dV_g)$ and the map to $\ker \Box$
%would be $\phi_j \to U_j$.
The flow $\Psi^t$ of the Killing vector field commutes
with the null-bicharacteristic flow and there defines  a quotient
(reduced) symplectic  flow on $\ncal$. We denote the quotient flow  by
$\Psi_{\ncal}^t$.  
\begin{lem}\label{K} $\Psi^t_{\ncal}$ is a Hamiltonian flow with Hamiltonian 
$$H(\zeta) = \xi (Z), \;\; {\rm where}\; \zeta = \{G^t(x, \xi), t \in \R\}. $$
The value $\xi(Z)$ is independent of the lift of $\zeta$ to $(x, \xi)$.
\end{lem}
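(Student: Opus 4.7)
The plan is to recognize this as a direct application of symplectic reduction (Marsden–Weinstein). The space $\ncal$ is the Marsden–Weinstein reduction of $T^*M \setminus 0$ along the coisotropic submanifold $\mathrm{Char}(\Box)$, whose null foliation is exactly the orbits of $G^t_0$. I will identify $\Psi^t_\ncal$ as the reduction of the cotangent lift of the Killing flow and exhibit its Hamiltonian.

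First, I recall that the flow $\Psi^t$ on $M$ lifts canonically to a symplectic flow on $T^*M$, namely the pullback action $(\Psi^t)^*$ on covectors. This lifted flow is Hamiltonian with generating function
\[
H(x,\xi) = \xi(Z_x),
\]
by the standard calculation identifying the cotangent lift of a vector field with the Hamiltonian vector field of its symbol on $T^*M$.

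Next, I verify the crucial Poisson bracket identity
\[
\{H, \sigma_\Box\} = 0.
\]
Since $Z$ is Killing, $\lcal_Z g = 0$; the induced cotangent action therefore preserves the dual Lorentzian metric, hence preserves $\sigma_\Box(x,\xi) = |\xi|_g^2$. Infinitesimally this is exactly $\tilde{Z}(\sigma_\Box) = \{H,\sigma_\Box\} = 0$. (Equivalently, this is the cotangent-level expression of \eqref{COMMUTE}.) As an immediate corollary, $H$ is constant along the orbits of $X_{\sigma_\Box}$, i.e. along every scaled null-geodesic. This proves the independence-of-lift statement: if $(x',\xi') = G^t(x,\xi)$ then $H(x',\xi') = H(x,\xi)$, so $\zeta \mapsto \xi(Z)$ is a well-defined function on $\ncal$.

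Finally, I invoke symplectic reduction. The identity $\{H,\sigma_\Box\}=0$ means (i) the Hamiltonian flow of $H$ on $T^*M\setminus 0$ preserves $\mathrm{Char}(\Box)$, and (ii) $H$ descends to a smooth function $\wt H$ on the reduced space $\ncal$. Standard Marsden–Weinstein reduction then asserts that the restriction of $X_H$ to $\mathrm{Char}(\Box)$ descends to the Hamiltonian vector field $X_{\wt H}$ of $\wt H$ on $\ncal$. But this descended flow is precisely $\Psi^t_\ncal$, because the cotangent lift of $\Psi^t$ covers $\Psi^t$ on $M$ and thus sends null-geodesics to null-geodesics compatibly with the projection $\mathrm{Char}(\Box) \to \ncal$. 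Hence $\Psi^t_\ncal$ is Hamiltonian with Hamiltonian $\wt H(\zeta) = \xi(Z)$, as claimed.

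There is no real obstacle here; the only mild points of care are (a) checking that the cotangent lift of a vector field really has symbol $\xi(Z)$ and (b) invoking reduction on the non-compact coisotropic $\mathrm{Char}(\Box)$, which is justified because the quotient $\ncal$ has already been asserted to be a smooth symplectic manifold in the preceding discussion.
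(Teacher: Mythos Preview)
Your argument is correct. The paper does not actually give a proof of this lemma; it is stated in the introduction and later (in the subsection on stationary spacetimes) simply asserted that ``since the Killing flow is symplectic on $T^*M$ and commutes with the metric the induced flow on $\ncal$ is homogeneous and symplectic,'' together with the remark that $\ncal$ arises from $T^*M\setminus 0$ by Marsden--Weinstein reduction. Your write-up is exactly the standard symplectic-reduction argument the authors are implicitly invoking, so there is nothing to compare.
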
 
The set of scaled null-geodesics is a symplectic cone and the quotient by its $\R_+$-action is the space $\ncal_p$ of unparametrized null-geodesics.
In our case, since $Z$ is timelike, the Hamiltonian is positive and homogeneous. Hence, for any $E>0$ the contact manifold $\ncal_p$ can be identified with the 
energy surface $\ncal_{E}: = \{\zeta: H(\zeta) = E\}$. As with any Hamiltonian flow, $\Psi^t_{\ncal}$ preserves level sets of $H$ and therefore also acts on 
$\ncal_{E}: = \{\zeta: H(\zeta) = E\}$. The induced flow on the quotient space $\ncal_p$ will be denoted by $\Psi^t_{\ncal_p}$. The identification $\ncal_p$ with $\ncal_E $ is equivariant.

We then  define the periods  and  periodic points of $\Psi^t_{\ncal_p}$ by
\begin{equation} \label{PERIODS} \pcal: = \{T \not= 0: \exists \zeta \in \ncal_p: \Psi^T_{\ncal_p}(\zeta) = \zeta\}, \;\; \pcal_T =  \{ \zeta \in \ncal_p: \Psi^T_{\ncal_p}(\zeta) = \zeta\}.  \end{equation}
Each periodic point $\zeta$ of period $T$ has an associated orbit $\cup_{t} \Psi^t_{\ncal_p}(\zeta)$. We will call such an orbit together with the period $T$ a periodic orbit 
$\gamma$ with period $T$. Thus, a periodic orbit $\gamma$ has an associated period $T_\gamma$ and an associated primitive period $T^\#_\gamma$. 
If $\zeta$ is in the orbit $\gamma$ the primitive period is by definition the smallest non-negative $t \in \R$ such that $\Psi^t_{\ncal_p}(\zeta) = \zeta$.
The period $T$ is an integer multiple of the primitive period $T_{\gamma}^{\#}$.
%We also denote the orbit of a periodic point $\zeta$ by $\gamma_{\zeta}$,
%\begin{equation} \label{gammaDef} \gamma_{\zeta}(t) = \Psi^t_{\ncal}(\zeta), \end{equation}
%and refer to the set of $\gamma_{\zeta}$ as the set of periodic orbits $\gamma$. If $\gamma$ is a periodic orbit we denote by $T_\gamma^\#$ the primitive period
%of $\gamma$. If $T \in \pcal$ and $\zeta \in \pcal_T$ then $T_{\gamma_\zeta}^\#$ is the smallest non-negative $t \in \R$ such that $\Psi^t_{\ncal}(\zeta) = \zeta$.
%In this case $T$ is an integer multiple of $T_{\gamma_\zeta}^{\#}$.

 \begin{prop}\label{POISSON}  Let $(M, g)$ be a spatially compact globally hyperbolic stationary spacetime. Then $ \Tr  \ee^{\rmi t D_Z} |_{\ker \Box} $ is a distribution on $\R$,
  and its singular support is a subset of $\{0\} \cup \pcal$. \end{prop}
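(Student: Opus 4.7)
The plan is to reduce the statement to a wavefront-calculus argument on a compact Cauchy surface. First I would fix a Cauchy surface $\Sigma$ and use the complete flow of $Z$ to trivialize $M \cong \R_t \times \Sigma$ with $Z = \partial_t$. Well-posedness of the Cauchy problem for $\Box$ on globally hyperbolic spacetimes gives an isomorphism
$$\rho: \ker \Box \stackrel{\sim}{\longrightarrow} C^\infty(\Sigma) \oplus C^\infty(\Sigma), \qquad u \mapsto (u|_\Sigma,\, \partial_t u|_\Sigma),$$
and under this isomorphism the Killing flow $\Psi^s$ intertwines with a one-parameter family $W(s) := \rho \circ \Psi^s \circ \rho^{-1}$ of (matrix-valued) operators on $\Sigma$. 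Since the Hilbert space topologies on $\ker \Box$ from Section~\ref{kerBoxSect} are equivalent to finite-energy Sobolev topologies on Cauchy data, I would check that the trace $\Tr \ee^{\rmi s D_Z}|_{\ker\Box}$ from \eqref{TRUtDEF} agrees distributionally with the Schwartz-kernel trace of $W(s)$ on $\Sigma$; polynomial growth of the eigenvalue counting function from Theorem~\ref{ponteigs} then gives convergence of $\sum_j \ee^{\rmi s \lambda_j}$ as a tempered distribution, so the object has meaning on $\R$.

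The crucial microlocal step is to show that $W(s)$ is a matrix of Fourier integral operators on $\Sigma$ whose canonical relation is a twisted graph of the null bicharacteristic flow $G^t_0$. A nonzero covector $(x, \xi) \in T^*\Sigma$ lifts to two null covectors in $T^*M|_\Sigma$ (one in each light cone of $g$); the corresponding null bicharacteristics reach the translated Cauchy surface $\Psi^s(\Sigma) = \{t = s\} \times \Sigma$, and since $Z$ is Killing they pull back by $\Psi^{-s}$ to $T^*\Sigma$. Propagation of singularities for $\Box$, combined with the commutation \eqref{COMMUTE}, confines the wavefront set of the Schwartz kernel of $W(s)$ to this canonical relation. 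Applying H\"ormander's wavefront calculus to the trace---the pullback of this kernel to the diagonal $\Delta_\Sigma \subset \Sigma \times \Sigma$ followed by pushforward to $\R_s$---places the singular support of $\Tr W(s)$ in those $s$ at which the canonical relation meets the conormal of $\Delta_\Sigma$; equivalently, those $s$ for which some null bicharacteristic in $T^*M$ closes up modulo the action of the Killing flow by parameter $s$.

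By Lemma~\ref{K} and the presentation of $\ncal_p$ as the symplectic/contact quotient of $\mbox{Char}(\Box)$ by the null geodesic flow, such $s$ are precisely the periods of periodic points of $\Psi^s_{\ncal_p}$, i.e.\ elements of $\pcal \cup \{0\}$ (with $s = 0$ appearing because the whole diagonal is trivially fixed at time zero). The principal obstacle is the FIO analysis of $W(s)$ in the genuinely stationary (non-static) case: when $Z$ is not orthogonal to $\Sigma$, the wave operator does not split as $\partial_t^2 + A(x, D_x)$ with a positive self-adjoint elliptic $A$ on $\Sigma$, so one cannot import a functional calculus of $\sqrt{A}$ as in the Riemannian proof of Duistermaat--Guillemin. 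Instead the microlocal structure of $W(s)$ must be analyzed directly, tracking the two light-cone branches separately via propagation of singularities and an Egorov-type argument, and taking care that the inner products defining the trace do not introduce spurious singularities under the diagonal restriction.
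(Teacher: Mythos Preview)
Your approach is essentially the same as the paper's. The paper also restricts to a Cauchy surface $\Sigma$, identifies the Killing translation on $\ker\Box$ with the Cauchy evolution $V(t)=R_t\circ R^{-1}$ (your $W(s)$), and then computes the singular support by pulling back to the diagonal and pushing forward to $\R_t$. The only substantive packaging difference is that, rather than invoking propagation of singularities abstractly to bound $\mathrm{WF}$ of $W(s)$, the paper expresses the kernel of $V(t)$ explicitly through the causal propagator $E$ translated by the flow, $E_t(x,y)=\ee^{\rmi t(D_Z)_x}E(x,y)$, and then reads off the canonical relation from the known FIO description $E\in I^{-3/2}(M\times M,C')$ of Duistermaat--H\"ormander. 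This buys nothing for the singular-support statement itself, but it furnishes the principal symbol data needed downstream for Theorems~\ref{HOERM} and~\ref{GUTZTH}.

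One small point: your appeal to ``polynomial growth of the eigenvalue counting function from Theorem~\ref{ponteigs}'' to make $\sum_j \ee^{\rmi s\lambda_j}$ a tempered distribution is circular in this paper's logic, since the polynomial bound is Corollary~\ref{WEYLCOR}, itself a consequence of the trace expansion. The paper instead gets the distributional existence of $\Tr U(t)$ directly from Theorem~\ref{UphiFORM} (smoothness of the integrated kernel after pushing forward in $t$), which requires no a priori spectral asymptotics; you already cite \eqref{TRUtDEF} for this, so the eigenvalue-growth remark can simply be dropped.
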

Given $\zeta \in \pcal_T$, there is a local 
symplectic transversal $S_{\zeta} \subset \ncal_E$ to $\Psi^t_{\ncal}$ and
a local first return map 
$\Phi_{\zeta}(y) = \Psi^{T_{\zeta}(y)}(y), $ where $y \in S_{\zeta}$ and  $\Psi_{\ncal}^t$-orbit of $y$ to
$S_{\zeta}$. The first return time $T_{\zeta}(y)$ is well-defined in a sufficiently small neighborhood of $\zeta \in S_{\zeta}$ and chosen such that $T=T_{\zeta}(\zeta)$.
 The 
 linear Poincar\'e map is then defined by
\begin{equation} \pcal_{\zeta}: T_{\zeta} S_{\zeta} \to T_{\zeta} S_{\zeta}, \;\; \pcal_{\zeta} = D_{\zeta} \Phi_{\zeta}. \end{equation}
The conjugacy class of the \Poincare map is independent of $E$ and of the point $\zeta$. Hence for a periodic orbit $\gamma$ it makes sense to define
$\det(I - P_{\gamma})$ as $\det(I - P_{\zeta})$, where $\zeta$ is a point in the orbit $\gamma$. 
The orbit is called {\it non-degenerate} if $\det(I-P_{\gamma}) \not=0. $
Orbits are classified as non-degenerate elliptic, hyperbolic and so on as for any Hamiltonian flow (see for instance \cite{Kl, FrG91}).
  
 We can now state the two main results:

\begin{theo} \label{HOERM}
For general spatially compact stationary globally hyperbolic spacetimes we have that
$$\Tr \ee^{\rmi t
 D_Z} \; |_{Char(\Box)}
 = e_0(t) + \psi(t)$$ where
$\psi$ is a distribution that is smooth near $0$, and $e_0(t)$ is a Lagrangian distribution with
singulariy at $t = 0$ of the form
$$e_0(t) \sim 2 (2 \pi)^{-n+1} (n-1) \mathrm{Vol}(\ncal_{H\leq1}) \mu_{n-1}(t) + c_1  \mu_{n-2}(t) 
+ \ldots,$$
where the homogeneous distribution $\mu_k(t)$ is defined by the oscillatory integral
$$
 \mu_k(t) = \frac{1}{2}\int_{-\infty}^\infty \mathrm{e}^{-\rmi t \tau} |\tau|^{k-1} \mathrm{d} \tau,
$$
(see for example \cite[Vol I]{HoI-IV} for properties of these distributions).
\end{theo}

\begin{theo} \label{GUTZTH}
Let $T \in \pcal$ and assume that the fixed point set $\pcal_T$ of $\Psi_{\ncal_p}^s$ on
$\ncal_p$ is non-degenerate, i.e.  it is a finite union of non-degenerate periodic
orbits $\gamma$. Then,
$$\Tr \ee^{\rmi t
 D_Z} \; |_{Char(\Box)}
 = 2 \sum_{\gamma: T_\gamma = T} \Re (e_{\gamma}(t)) + \psi_T, $$ where $\psi_T$ is a distribution smooth near $t=T$ and
$e_{\gamma}(t)$ are Lagrangian distributions with
singularities at $t = T_{\gamma}$. If $\gamma$ is non-degenerate, we have
$$e_{\gamma}(t) \sim \corri{\frac{1}{2 \pi \rmi}} \frac{\ee^{-\rmi \frac{\pi}{2} m_{\gamma}} T^{\#}_\gamma
}{|\det(I - P_{\gamma})|^{\half}} (t - T_{\gamma}+\rmi 0)^{-1} + \ldots,
$$
where $m_{\gamma}$ is the Conley-Zehnder index of the periodic orbit
$\gamma$. The sum is over all periodic orbits of period $T$.
The expansion above is a singularity expansion around $t=T_{\gamma}$.
\end{theo}
The factor $e^{-\rmi \frac{\pi}{2} m_{\gamma}} $ is often called the Maslov
factor and $m_{\gamma}$ the Maslov index. In \cite{R91}, the index is clarified and in \cite{M94} it is generalized to all symplectic manifolds and
identified as the Conley-Zehnder index. There is a standard generalization to `clean fixed point sets'  but we will not consider such generalizations here.

As a corollary, using a standard Fourier Tauberian argument, \cite[Lemma 17.5.6]{HoI-IV}, we may derive a Weyl law for the growth of the
spectrum of $D_Z$ in $\mbox{Char}(\Box)$.

\begin{cor}\label{WEYLCOR} 
For general spatially compact stationary globally hyperbolic spacetimes the
spectrum of $D_Z$ in $\mbox{Char}(\Box)$ is
discrete. Moreover the Weyl eigenvalue counting function
$$N_{Z}(\lambda) : = \# \{j: 0\leq \lambda_j \leq \lambda\}, $$
has the asymptotics,
$$N_Z(\lambda) = \frac{1}{(2 \pi)^{n-1}}\; \mathrm{Vol}(\ncal_{H \leq 1}) \lambda^{n-1} + O( \lambda^{n-2}),
$$
as $\lambda \to \infty.$
\end{cor}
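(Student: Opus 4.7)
The plan is to deduce the Weyl asymptotic from the singularity expansion of Theorem \ref{HOERM} via the standard Fourier Tauberian argument, namely \cite[Lemma 17.5.6]{HoI-IV}. Discreteness of the real part of the spectrum of $D_Z|_{\ker \Box}$ has already been established in Theorem \ref{ponteigs}, and only finitely many eigenvalues are non-real, so they contribute only an $O(1)$ correction to $N_Z(\lambda)$ that is harmless for the asymptotic.

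First I would introduce the spectral counting measure $\nu = \sum_{\lambda_j \in \R} \delta_{\lambda_j}$, summed with multiplicities over the real eigenvalues. Its Fourier transform $\hat \nu(t) = \sum_j \ee^{\rmi t \lambda_j}$ differs from $\Tr \ee^{\rmi t D_Z}|_{\ker \Box}$ by an entire function coming from the finite non-real spectrum, so the two distributions share their singularity expansion at $t=0$. The symmetry $\lambda_j \mapsto -\lambda_j$ of the real spectrum gives $\nu((-\lambda,\lambda]) = 2 N_Z(\lambda) + O(1)$, so it suffices to prove the two-sided asymptotic $\nu((-\lambda,\lambda]) = \frac{2\, \mathrm{Vol}(\ncal_{H\leq 1})}{(2\pi)^{n-1}} \lambda^{n-1} + O(\lambda^{n-2})$. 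Applying Hörmander's Tauberian lemma requires: (i) $\nu \geq 0$, which is trivial; (ii) $\hat \nu$ smooth on a punctured interval $(0, T_0)$, which follows from Proposition \ref{POISSON} once one knows $T_0 := \inf(\pcal \cap (0,\infty)) > 0$; and (iii) a leading singularity of the form $C\, \mu_{n-1}(t)$, which is exactly Theorem \ref{HOERM} with $C = 2(2\pi)^{-n+1}(n-1) \mathrm{Vol}(\ncal_{H\leq 1})$. Since $\mu_{n-1}(t)$ is, up to a universal constant, the inverse Fourier transform of $|\tau|^{n-2}$, the Tauberian theorem gives $\nu((-\lambda,\lambda]) = \frac{C}{n-1}\lambda^{n-1} + O(\lambda^{n-2})$, and halving yields the claimed asymptotic for $N_Z(\lambda)$.

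The main obstacle I anticipate is verifying (ii), namely the strict positivity of $T_0$. Local bounds near each non-degenerate periodic orbit do not by themselves exclude arbitrarily short orbits elsewhere in $\ncal_p$, so a global argument is needed. Since $Z$ is timelike, the Hamiltonian $H(\zeta) = \xi(Z)$ of Lemma \ref{K} is strictly positive on $\ncal$, so the Killing flow $\Psi^t_\ncal$ has nowhere-vanishing velocity on the level set $\ncal_{H=1}$. Spatial compactness of $M$ translates into compactness of $\ncal_{H=1}$ (it may be identified with a sphere-like bundle over a Cauchy surface), and a flow-box argument on a finite cover of this compact set rules out orbits of arbitrarily small period. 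A secondary, purely bookkeeping issue is the careful tracking of the factors of $2\pi$ and the factor of $2$ from spectral symmetry in order to recover the precise leading constant stated in the corollary.
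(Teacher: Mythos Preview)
Your proposal is correct and follows exactly the approach the paper indicates: the paper gives no detailed proof of Corollary~\ref{WEYLCOR} beyond the sentence ``using a standard Fourier Tauberian argument, \cite[Lemma 17.5.6]{HoI-IV}'', and your argument is precisely the standard unpacking of that reference. Your identification and treatment of the one nontrivial point---that the infimum $T_0$ of positive periods is strictly positive, proved via compactness of $\ncal_{H=1}\cong\ncal_p$ and a flow-box argument---is correct and in fact supplies a detail the paper leaves implicit.
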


As in the product case, we conjecture that the remainder term is $o(\lambda^{n-2})$ when the set of periodic orbits has Liouville measure zero.

\subsection{Outline of the proof}

The proof of Theorems \ref{HOERM} and \ref{GUTZTH} is based on the symbol calculus of
Fourier integral operators and does not require  hard analysis. In particular, it does not use separation of variables.  For this reason, the approach should allow for further results  in relativistic
spectral asymptotics. 

The  well-known important features of globally hyperbolic stationary 
spacetimes used in the proof are:\bigskip

\begin{enumerate}

\item The Cauchy problem for $\Box u = 0$ with Cauchy data on a spacelike Cauchy hypersurface $\Sigma$ is well-posed. There exist global solutions for finite energy Cauchy data.  \bigskip

\item There exist advanced, resp. retarded, Green's functions $E_{\mathrm{ret/adv}}$
satisfying $\Box E_{\mathrm{ret/adv}} = \mathrm{id}$.
%and having wave front sets in the forward, resp. backwards, lightcones.
The  Green's functions can be used
to give Fourier integral  representation formulae for solutions of $\Box u = 0$ as integrals
over $\Sigma$ in the sense of \cite{R1,R2, F} (see \cite{BGP} for a modern presentation). This allows for a Fourier integral operator representation of the Killing flow translation on null solutions. The Green's functions and representation formulae also induce traces on the null-space of $\Box$.
%In particular, they induce Hadamard states from which one can construct traces. 
\bigskip

\item Given one Cauchy hypersurface $\Sigma$, the Killing flow induces
a foliation of $M$ by Cauchy hypersurfaces $\Sigma_t = \ee^{t Z} \Sigma.$
Thus, the unitary evolution operator can be viewed as arising from the 
cobordism of $\Sigma = \Sigma_0$ and $\Sigma_t$, analogous to the elliptic case studied in \cite{BW} and elsewhere. One then uses the Killing flow $e^{t Z}$ to pull back
the Cauchy data on $\Sigma_t$  to $\Sigma_0$.
That is, we   consider the evolution operator as 
 $$CD_0(\Sigma) \to \ker \Box \to  CD(\Sigma_t) \to CD_0(\Sigma)$$
 where the last arrow is translation by $\ee^{t Z}$.  One may visualize the
 operator in terms of the following diagram:
 \begin{equation}
 \begin{tikzcd}\label{DIAGRAMA}
   & \ker \Box \arrow[ld,"CD_0"'] \arrow[rd,"CD_t"] & \\
  \hcal_{(1)}(\Sigma_0) \oplus L^2(\Sigma_0)  &  &  \arrow[ll,"\Psi(-t)"]  \hcal_{(1)}(\Sigma_t) \oplus L^2(\Sigma_t) 
 \end{tikzcd}
 \end{equation}
%One can construct $CD_t^{-1} \circ CD_0$ directly as a Fourier integral
%operator mapping 
%$$ \hcal_{(1)}(\Sigma_0) \oplus L^2(\Sigma_0) \to \hcal_{(1)}(\Sigma_t) %\oplus L^2(\Sigma_t). $$
Here, $CD_t$ denotes the restriction or Cauchy data operator with respect to $\Sigma_t$.
On the symbolic level,  the diagram becomes
\begin{equation}
 \begin{tikzcd}\label{DIAGRAMB}
   & \rm{Char}(\Box) \arrow[ld,"\pi_1"'] \arrow[rd,"\pi_2"] & \\
  \ncal \simeq T^* \Sigma_0  &  &  \arrow[ll,"\ee^{-t Z}"]  \ncal \simeq T^* \Sigma_t
 \end{tikzcd}
 \end{equation}
 where the identifications $\ncal \simeq T^* \Sigma_t$ are to take the 
 tangent co-vector of the null geodesic at $x \in \Sigma_t$ and restrict
 it to $T \Sigma_t$. In this sense, our problem is reminiscent of recent work on restriction theorems for eigenfunctions. 

As in \cite{DG75}, the trace is represented as a composition  $\pi_* \Delta^* \ee^{t Z} \circ \Psi(-t)$ of Fourier integral operators as in the top diagram. Here, as in \cite{DG75},
$\Delta^*$ represents a pull-back to a certain diagonal and $\pi_*$ denotes integration over certain fibers.  
%Also $U$ denotes the spacetime Green's function or Hadamard state that defines the evolution.
Once we make these
notions precise, the relativistic Gutzwiller-Duistermaat-Guillemin formula follows by the composition calculus of canonical relations and symbols. The main purpose of setting up the problem in this invariant form is that the role of periodic orbits of
$e^{t Z}$ on $\ncal$ comes out naturally.

\end{enumerate}

\subsection{Sign conventions and notations}

Since several statements in this paper depend critically on the correct and consistent sign conventions we explain here our choice and its relation to the literature.
The Fourier transform $\hat f$ of $f \in L^1(\R^n)$ will be defined by $$\hat f(\xi) = \int f(x) \ee^{-\rmi \, x \cdot \xi} \dd x$$ where $x \cdot \xi$ is the Euclidean inner product on $\R^n$.
For the metric we choose the sign convention $(-,+,\ldots,+)$ and the d'Alembert operator $\Box$ has principal part in local coordinates $-\sum_{i,k = 1}^n g^{ik} \partial_i \partial_k$ and therefore its principal symbol is
$\sigma_\Box(\xi)=g(\xi,\xi)$. 
%This implies that the positive-time Hamiltonian flow of the principal symbol of $\Box$ of a future directed covector is to the past. In \cite{DH} the Hamiltonian flow
%is used to orient the bicharacteristic strips which leads to some sign conventions about future and past to be different from our's. This will also be indicated in the text. 
Finally, we work with the field of complex numbers unless otherwise stated: For example $C^\infty(M)$ denotes the space of complex valued smooth functions on $M$. The set of real-valued smooth functions on $M$ is denoted by $C^\infty(M,\R)$. For the sake of computations of principal symbols of operators we identify functions on manifolds with half-densities, using the metric and then view the operators as acting between half-densities. 

\subsection{Comments}

In Section \ref{DG-Product}, we explain how to reformulate
the Gutzwiller-Duistermaat-Guillemin trace formula on the product Lorentz manifold $\R \times \Sigma$ in these terms. 
 Apart from the fact that our results are intrinsically relativistic they are also more general, and cannot be reduced in a straightforward way to the classical trace formula for compact manifolds.
 Instead of a classical eigenvalues equation of the form
 $$
  (\Delta -\lambda^2) \psi =0
 $$
 the stationary eigenvalue problem is equivalent (via separation of variables) to an eigenvalue problem of a quadratic operator pencil of the form
 $$
   ( \Delta - 2 (\I X) \lambda  - \lambda^2) \psi =0,
 $$
where $\Delta$ is a Laplace-type operator on a compact Riemannian manifold, and $X$ a vector field on that manifold. The vector field $X$ is related to the shift vector field and it vanishes in case the spacetime carries a metric of product-type.
We will show that this quadratic pencil factorises into two linear scalar pseudo-differential eigenvalue problems only in case $X$ is a Killing vector field.
Generally this is not the case for stationary space-times.

The appearance of quadratic operator pencils in stationary problems in general relativity has been observed in many situations,
for example for Kerr spacetime (\cite{B00}) or the BTZ black hole in dimension 3 (\cite{BDFK,B00}), or in the context of a general Klein-Gordon equation (\cite{GGH17}). In \cite{F08} the author claims that operator pencils of this form should have a spectrum satisfying Weyl's law, and the example of a rotating Einstein universe is considered. We are not aware of any previous work on singularity expansions of the trace in this context, nor of any rigorous work on a Weyl law and its error.

This paper deals with globally hyperbolic spacetimes with compact Cauchy surface, and
the manifolds we consider do not generally satisfy the vacuum Einstein equations. In fact,
in dimension four all stationary solutions of the vacuum Einstein equations are either
flat or are spatially non-compact ([A00, CM16]).  Our setting is analogous to that of Duistermaat-Guillemin \cite{DG75}, which considers the trace of the wave group on  all compact Riemannian manifolds, not just on Einstein manifolds. Our results similarly encompass all Lorentzian stationary globally hyperbolic spatially compact spacetimes.

\subsection{Related problems} The relativistic approach of this article
applies to many other spectral problems related to Weyl's law and the 
wave trace on a compact Riemannian manifold. Some potential applications
could include a relativistic analogue of quantum ergodicity for globally hyperbolic spacetimes for which $e^{t Z}$ acts ergodically on the space
$\ncal_p$ of unparametrized null-geodesics. A slightly different problem is that of semi-classical mass asymptotics in which both the energy and the mass increase simultaneously. A  Weyl formula in this context that was inspired by this paper has meanwhile appeared in \cite{SZ20}.

\subsection{Generalization to spatially non-compact stationary spacetimes}

It is natural to try to generalize the trace formula of this article to globally
hyperbolic stationary spacetimes with non-compact Cauchy hypersurfaces. 
The spectrum of $D_Z |_{\ker \Box}$ then becomes continuous and the formulae of this article do not apply. 
In the ultra-static (product) case, this generalization amounts to trace formulae,  scattering theory and resonances for short range (e.g. compact) metric or potential  perturbations of the Euclidean Laplacian on $\R^n$. Scattering for the exterior of several compact obstacles also falls into this class of problems.  Trace formulae in scattering theory, involving either the scattering phase or resonances, were originally  given in works of Krein, Jensen-Kato, Majda-Ralston, Melrose and many others;  singularities trace formulae in this setting were originally studied by Bardos-Guillot-Ralston. Their  results  require a well-developed scattering theory and involve many technical problems beyond those studied in the present paper. The generalization to spatially  short range perturbations of Minkowski
space will be presented in \cite{SZ19}; references to the literature in the ultra-static case can be found there.

We also hope to generalize the trace formula to vacuum black-hole  spacetimes,  such as the Schwarzschild- or Kerr-spacetimes. These are long-range perturbations of Minkowski space with many additional complications.  It would also be interesting  to
consider stationary spacetimes with timelike boundary as well as non-compact spacetimes. 

The   present article should be thought of as the first step towards a treatment of the spectral problem in the stationary setting. One of the main points is that virtually any spectral problem studied on Riemannian manifolds has a natural generalization to the stationary Lorentzian setting.

\subsection{Acknowledgements}  We are grateful to the Erwin Schr\"odinger institute in Vienna for hosting the programme ``The Modern Theory of the Wave Equation"
where this project started. The authors would also like to thanks Christian G\'erard and Colin Guillarmou for comments on a earlier version and useful discussions.

\section{The geometry of globally hyperbolic spacetimes}

By a spacetime we will mean a connected oriented time-oriented Lorentzian manifold $(M,g)$ of signature $(-,+,\ldots,+)$.
The {\it chronological future} $I_+(x)$ of a point is the set of
points that can be reached from $x$ by timelike future-oriented
curves. The {\it causal future} $J_+(x)$ is the set of points
reached from $x$ by causal curves (i.e. future directed curves whose tangent
vectors are timelike or lightlike). Given a set $A \subset M$,
$I_+(A) = \bigcup_{x \in A} I_+(x), J_+(A) : = \bigcup_{x \in A}
J_+(x).$ In general, $I_+(A) = \mbox{int} J_+(A)$ (the interior)
and $J_+(A) \subset \overline{I_+(A)}. $ There are similar
definitions for the past (using past oriented curves). For further
details, see \cite{HE} Chapter 6.

A smooth hypersurface $\Sigma \subset M$ is called {\it Cauchy surface} if every inextensible causal curve intersects $\Sigma$ exactly once.
A spacetime that admits a Cauchy surface is called {\it globally hyperbolic}. The class of globally hyperbolic spacetimes is the most natural class of spacetimes on which the initial value problem
for hyperbolic partial differential equations is well posed. Global hyperbolicity implies that $M$ is diffeomorphic to the product-manifold $\R \times \Sigma$. More precisely, by a result by Geroch \cite{G}
and Bernal-Sanches \cite{BS,BS2} there exists a smooth foliation by Cauchy hypersurfaces. On a general globally hyperbolic spacetime such a foliation by smooth Cauchy hypersurfaces is highly 
non-unique. From a physics point a view any particular choice splits the tangent space artificially into time- and spacelike directions and destroys relativistic covariance. From a more mathematical point of view
one would prefer to work with objects that are naturally associated with the category one works in (for example a suitable category of globally hyperbolic spacetimes).

Manifolds with a complete timelike Killing vector field are called {\it stationary}. If $(M,g)$ is a stationary globally hyperbolic spacetime then it is easy to see
(for example \cite[Lemma 3.3]{JS}) that \begin{equation}\label{St1}  (M,
g) \simeq (\R \times \Sigma, -(N^2-|\eta|^2_h) \dd t^2 + \dd t \otimes \eta + \eta \otimes \dd t +h),
\end{equation} 
where $(\Sigma,h)$ is a Riemannian manifold, $N: \Sigma \to \R_+$ is a positive smooth function, and $\eta$ a a co-vector field on $\Sigma$. In this case $\partial_t$ is a Killing vector field. Such stationary spacetimes are sometimes referred to as {\it standard stationary spacetimes}.
In case $\eta$ can be chosen to be zero such a stationary spacetime is called {\it static}. This means that the distribution defined by the orthogonal complement of the Killing vector field is integrable.
A very particular class of examples of globally hyperbolic spacetimes are product spacetimes $\R \times \Sigma$ with metric $g=-dt^2+h$, where $(\Sigma,h)$ is a complete Riemannian manifold. 
Spacetimes isometric to such products
are commonly referred to as {\it ultrastatic}. 
Hence, stationary spacetimes form a more general class than static spacetimes, and static spacetimes are more general than ultrastatic spacetimes.
Examples of stationary spacetimes that are in general non-static are Kerr and Kerr-Neuman spacetimes. The Schwarzschild spacetime is static but not ultrastatic.

It is known (see \cite{BGP} for references)
that a globally hyperbolic spacetime is isometric to a product
\begin{equation}\label{SPLIT}  (M, g) \simeq (\R \times \Sigma,  -N^2 \dd t^2 + h_t), \end{equation}
where $N: M \to \R_+$ is a positive smooth function and $h_t$ is a smooth family of metrics on $\Sigma$. Again, this representation is highly non-unique. If $(M,g)$ is stationary and globally hyperbolic
one cannot in general
choose this foliation compatible with the Killing flow in the sense that $\partial_t$ is the Killing vector field so that $h$ and $N$ do not depend on $t$.

A globally hyperbolic spacetime will be called {\it spatially compact} if there exists a compact Cauchy surface. In this case all Cauchy surfaces will be compact.

%As described in \cite{G}, global hyperbolicity means that there
%exist no closed, causal curves and that,  given any two points,
%the set of timelike or null causal curves joining these points is
%compact. Thus, there are no asymptotic regions, holes or
%singularities in the region between these points.

%\subsection{Gerard notation}
%
%Let $(M, g)$ be a globally hyperbolic Lorentzian manifold.
%Let $$P = \Box = - \nabla^{\mu} \nabla_{\mu} + m  $$ be the Klein-Gordon operator of $(M, g)$. Let
%$$\sigma_P = \xi g^{-1} \xi $$
%be its principal symbol. Put
%$$\ncal = \{\sigma_P = 0\} = \ncal_+ \cup \ncal_- $$
%\edit{My notation for the space of null geodesics}

  \subsection{Symplectic geometry of the space of null-geodesics} \label{sec1.1}
 
\corri{In the following we review the symplectic and contact structures on the space of geodesics on a Lorentzian manifold. These structures have been subject to investigation in the context of Lorentzian geometry and we refer the reader to \cite{CN10,KT07,L01}}.

 Given a globally hyperbolic  spacetime $(M,g)$ we denote by $\ncal_p$ be the set of unparametrized inextensible lightlike geodesics. This set is the space of leaves of the null-foliation
 $\{(x, \xi) \in T^*M: g^{-1}_x( \xi,\xi) = 0 \}$. Here $g^{-1}$ denotes the induced metric on the cotangent space. 
 Similarly, we denote by $\ncal_a$ the space of affinely parametrised future-directed geodesics. If we identify affinely parametrised geodesics whose parameters $s',s$ are related by a simple shift
 $s'= s + c, c \in \R$ we obtain the space $\ncal$ of future-directed {\it scaled null-geodesics}. 
 Note that $\ncal$ carries an $\R_+$ action by rescaling the parameter. 
 
 By  Proposition \ref{symplmap} below, $\ncal$  is a symplectic manifold, and by definition it is a symplectic (Marsden-Weinstein) quotient. When $(M,g)$ is geodesically complete, the  quotient is defined as the symplectic manifold obtained by considering $\{(x,\xi) \in T^*M \setminus 0 \mid g^{-1}_x(\xi,\xi))=0\}$ and dividing out the $\R$-action induced by the Hamiltonian vector field. That way $\ncal$ is obtained by Hamiltonian reduction from the set of nullcovectors. This gives an  invariant characterisation of the symplectic structure \corri{(see \cite[Th 2.1]{KT07})}. 
  %We can therefore use this map to endow $\ncal$  with the structure of a symplectic cone.

 If $(M,g)$ is globally hyperbolic and $\Sigma$ a Cauchy surface then each element in $\ncal$ intersects $\Sigma$ exactly once. 
 The quotient exists as a manifold because the Cauchy surface provides us with a smooth cross section.
The  tangent vector of the geodesic is lightlike
 and, identifying $T^*M$ and $TM$ using the metric, so is the associated cotangent vector. The pull back of this co-vector to $\Sigma$ defines a covector in $T^*\Sigma \setminus 0$.
 This defines a map $\ncal$ to $T^*\Sigma \setminus 0$, which is invertible as for each element $\eta \in T^* \Sigma$ there is precisely one lightlike future directed covector $\xi \in T^*M \setminus 0$ 
 whose pull-back is $\eta$. This map is equivariant with respect to the $\R_+$-actions on $\ncal$ and $T^*\Sigma\setminus 0$.  
If the geodesic flow is complete on $M$ then it defines a Hamiltonian $\R$-action on $T^*M \setminus 0$.

 The following Proposition is closely related to results of Low \cite{L98, L01}.
 \begin{prop} \label{symplmap}
  If $(M,g)$ is a globally hyperbolic spacetime then the smooth structure and the symplectic structure on $\ncal$ do not depend on the Cauchy surface, and are therefore invariantly defined. Hence,
  for any Cauchy surface $\Sigma$ the above defined map
  $\ncal \to  T^* \Sigma \setminus 0$ is a homogeneous symplectic diffeomorphism.
 \end{prop}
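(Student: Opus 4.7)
The plan is to realize $\ncal$ intrinsically as a symplectic reduction of the characteristic hypersurface $\mbox{Char}(\Box) \subset T^*M \setminus 0$, and then to verify that every Cauchy surface $\Sigma$ provides a global smooth symplectic transversal to the null foliation. The identification of this transversal with $T^*\Sigma \setminus 0$ then gives the stated symplectic diffeomorphism, and the intrinsic description of the reduction makes the $\Sigma$-independence automatic.

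First, $\mbox{Char}(\Box) = \{(x,\xi) \in T^*M \setminus 0 : g^{-1}_x(\xi,\xi)=0\}$ is a coisotropic smooth hypersurface of $T^*M$, being a regular level set of the geodesic Hamiltonian $H(x,\xi)=\frac{1}{2}g^{-1}_x(\xi,\xi)$ away from the zero section. Its symplectic null direction is spanned by the Hamiltonian vector field $X_H$, and integral curves of $X_H$ are precisely the cotangent lifts of affinely parametrised null geodesics. Modding out by a shift of the affine parameter is the same as passing to the leaf space of this foliation, so as a set we have $\ncal = \mbox{Char}(\Box)/X_H$. Provided this quotient is a smooth Hausdorff manifold, Marsden--Weinstein reduction endows it with a canonical symplectic form $\omega_\ncal$, and the $\R_+$-action on $\ncal$ that comes from rescaling the affine parameter corresponds to the fibre dilation $(x,\xi)\mapsto(x,\lambda\xi)$ on $\mbox{Char}(\Box)$.

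Second, given a spacelike Cauchy surface $\Sigma$ with future-directed unit conormal $\nu$ and induced metric $h$, and given $\eta \in T^*_x\Sigma \setminus 0$ with $x\in\Sigma$, we look for $\xi = \tilde\eta + \alpha\nu \in T^*_xM$ with $\tilde\eta$ the extension of $\eta$ to $T^*_xM$ annihilating $\nu^\sharp$. The null condition $g^{-1}(\xi,\xi)=0$ forces $\alpha^2 = |\eta|^2_{h^{-1}}$, and the future-directed condition selects one of the two roots, producing a smooth map $\iota_\Sigma: T^*\Sigma \setminus 0 \to \mbox{Char}(\Box)$. Because $\iota_\Sigma$ covers the inclusion $\Sigma\hookrightarrow M$ on the base and $\iota_\Sigma(\eta)$ restricts to $\eta$ along $T\Sigma$, a direct computation with the tautological one-forms gives $\iota_\Sigma^*\theta_M=\theta_\Sigma$, hence $\iota_\Sigma^*\omega_M=\omega_\Sigma$. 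Thus $\iota_\Sigma$ is a symplectic embedding, and it is $\R_+$-equivariant by construction.

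Third, global hyperbolicity implies that every inextensible null geodesic meets $\Sigma$ exactly once; lifted to $\mbox{Char}(\Box)$, this says each orbit of $X_H$ meets $\iota_\Sigma(T^*\Sigma\setminus 0)$ in exactly one point. Transversality at that point is automatic because the base projection of $X_H$ at $\iota_\Sigma(\eta)$ is $g^{-1}\xi$, whose $\nu^\sharp$-component equals $\pm|\eta|_{h^{-1}}\neq 0$, so $X_H$ is transverse to the lifted hypersurface. This simultaneously shows that $\mbox{Char}(\Box)/X_H$ is a Hausdorff smooth manifold (with the slice $\iota_\Sigma(T^*\Sigma\setminus 0)$ a smooth global cross-section) and that the induced map $T^*\Sigma \setminus 0 \to \ncal$ is a homogeneous symplectic diffeomorphism whose inverse is the map described in the proposition. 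Since the smooth and symplectic structures on $\ncal$ are fixed by the intrinsic reduction of $\mbox{Char}(\Box)$, independently of $\Sigma$, the two transversal identifications for any $\Sigma_1,\Sigma_2$ automatically differ by a symplectic diffeomorphism, namely the ``propagation along null geodesics'' map. The one nontrivial point in the whole argument is the smoothness and Hausdorffness of the leaf space; this is precisely the step where the Cauchy property is essential, and it is settled by exhibiting the global smooth transversal $\iota_\Sigma$.
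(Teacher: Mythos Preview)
Your proof is correct and uses the same key ingredients as the paper: the pull-back computation $\iota_\Sigma^*\theta_M=\theta_\Sigma$ for the tautological one-forms, and the fact that the geodesic flow is Hamiltonian and hence symplectic. The organization differs slightly: the paper first \emph{defines} the smooth and symplectic structure on $\ncal$ by transporting it from $T^*\Sigma\setminus 0$ for one fixed $\Sigma$, and then checks independence of $\Sigma$ by noting that the transition map between two Cauchy surfaces is realized by the geodesic flow; you instead define $\ncal$ intrinsically as the Marsden--Weinstein reduction of $\mbox{Char}(\Box)$ and exhibit each $\Sigma$ as a global symplectic transversal, so that independence is built in. The paper in fact remarks immediately after its proof that your viewpoint is equivalent (``$\ncal$ is obtained by Hamiltonian reduction from the set of null covectors. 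This gives a more invariant characterisation of the symplectic structure.''). One small point: since $\ncal$ in the paper consists only of \emph{future-directed} scaled null geodesics, your quotient should be of the future component $T^*_{0,+}M\setminus 0$ rather than all of $\mbox{Char}(\Box)$; this is implicit in your choice of the future-directed root for $\alpha$ but should be stated for the identification $\ncal=\mbox{Char}(\Box)/X_H$.
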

 \begin{proof}
  Fix a Cauchy surface $\Sigma$. Let $T_{0,+}^* M$ be the fibre bundle of future directed nonzero null-covectors over $M$, and let $T_{0,+}^* M\vert_\Sigma$ be its restriction to $\Sigma$.
  As explained above we have a well defined  diffeomorphism $\nu: T^* \Sigma \setminus 0 \to T_{0,+}^* M\vert_\Sigma$. The latter is a smooth submanifold in $T^*M$.
  All together we have the following commutative diagram
  \begin{equation}
 \begin{tikzcd}\label{DIAGRAM-sympl}
   T^*\Sigma \setminus 0   \arrow[r,"\nu"]   \arrow[d,"\pi_1"] & T_{0,+}^* M\vert_\Sigma \arrow[hookrightarrow]{r}  \arrow[d,"\pi_2"] & T^* M \arrow[d,"\pi_3"] \setminus 0 \\
  \Sigma \arrow[r,"\mathrm{id}"]  & \Sigma  \arrow[hookrightarrow]{r}& M,
  \end{tikzcd}
 \end{equation}
 where $\pi_1,\pi_2,\pi_3$ are the fibre projections.  The map $T^* \Sigma \setminus 0 \to T_{0,+}^* M\vert_\Sigma, (x,\eta) \mapsto (x,\xi)$
 has the property that $\xi$ and $\eta$ project to the same co-vector on $\Sigma$, so they differ by an element in the conormal bundle. It follows that the pull-back of the tautological one-form
 on $T^*M$ to $T^*\Sigma$ is the tautological one-form on $T^*\Sigma$. Hence, the maps in the first line of the diagram are symplectic.  
 If $p \in \ncal$ and $\Sigma, \Sigma'$ are two Cauchy surfaces, then the corresponding points on $T^*\Sigma$ and $T^* \Sigma'$ are in the same orbit of the geodesic flow.
  The result now follows from the fact that the geodesic flow is Hamiltonian, with $H(\xi)=\frac{1}{2} g^{-1}(\xi,\xi)$, and therefore preserves the symplectic structure.
  \end{proof}
  
\begin{rem}
Note that the geodesic vector field on a spatially compact globally hyperbolic spacetime does not necessarily have to be complete. An example is given by the globally hyperbolic spacetime
$((-1,1) \times \Sigma, -\dd t^2 + h)$, where $(\Sigma,h)$ is a compact Riemannian manifold. 
\end{rem}

If $(M,g)$ is a spatially compact globally hyperbolic spacetime then $\ncal$ is a symplectic cone whose quotient, $\ncal_p$ by the $\R_+$-action is then a compact contact manifold.
For a given Cauchy surface $\Sigma$ this contact manifold is isomorphic to the projectivised cotangent bundle $\mathbb{P} (T^* \Sigma)$.

%A tangent vector  $Y \in T_{\zeta} \ncal$ is by definition a vector field on the
%null-bicharacteristic $\zeta$ coming from a variation of null-bicharacteristics. Thus, $Y(t)$ is a special Jacobi field along $\zeta $ arising from a null-bicharacteristic variation.
%
%
%\begin{rem} It appears that the projective space $\PP N$ of null-twistors
%is the same as the space of real null geodesics. See \cite{HT}
%page 56 or \cite{P} page 351. \end{rem}

\subsection{Stationary spacetimes}

Assume now that $Z$ is a complete Killing vector field on the globally hyperbolic spacetimes $(M,g)$. Then the Killing flow $\Phi$ acts on the spaces $\ncal$ and $\ncal_p$.
Since the Killing flow is symplectic on $T^*M$ and commutes with the metric the induced flow on $\ncal$ is homogeneous and symplectic, and the flow on $\ncal_p$ is contact.
Our assumptions imply that the bicharacteristic flow is defined for all times.

\begin{prop}
 Assume that $(M,g)$ is a spatially compact stationary globally hyperbolic spacetime. Then $(M,g)$ is geodesically complete. 
\end{prop}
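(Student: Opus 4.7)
My plan is to use the stationary structure to produce two conservation laws along any geodesic, to combine them with spatial compactness so as to bound the velocity in an auxiliary \emph{complete} Riemannian metric on $M$, and to finish by the standard ODE continuation principle. By the splitting \eqref{St1} (as cited in the excerpt), I may write
$(M,g) \simeq (\R \times \Sigma,\, -(N^2-|\eta|_h^2)\,\dd t^2 + \dd t \otimes \eta + \eta \otimes \dd t + h)$
with $Z = \partial_t$ and $\Sigma$ compact (all Cauchy hypersurfaces are compact by assumption). On $\R \times \Sigma$ I introduce the product Riemannian metric $\hat g := \dd t^2 + h$, which is complete since $(\Sigma,h)$ is compact. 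Because $Z$ is timelike and $\Sigma$ is compact, the continuous function $A := N^2 - |\eta|_h^2$ satisfies uniform bounds $0 < \delta_1 \le A \le \delta_2$ on $\Sigma$, and $|\eta|_h$ is uniformly bounded as well.

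For any affinely parametrised geodesic $\gamma$, two scalars are constant along $\gamma$: the ``norm'' $c_1 = g(\dot\gamma, \dot\gamma)$, and, because $Z$ is Killing, the ``momentum'' $c_2 = g(\dot\gamma, Z)$. Writing $\dot\gamma = \dot t\,\partial_t + X$ with $X \in T\Sigma$, the explicit form of $g$ gives
\[
c_2 = -A\,\dot t + \eta(X), \qquad c_1 = -A\,\dot t^2 + 2\dot t\,\eta(X) + |X|_h^2,
\]
and eliminating $\eta(X)$ between the two yields $|X|_h^2 = c_1 - A\,\dot t^2 - 2 c_2\,\dot t$. The non-negativity of the left side, combined with $A \ge \delta_1 > 0$ and (in the timelike case $c_1 < 0$) the reverse Cauchy--Schwarz inequality $c_2^2 \ge -c_1 A$ applied to the timelike pair $\dot\gamma, Z$, confines $\dot t$ to an interval depending only on $c_1, c_2, \delta_1, \delta_2$; substituting back gives a uniform bound on $|X|_h$. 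Hence $|\dot\gamma|_{\hat g}^2 = \dot t^2 + |X|_h^2$ is uniformly bounded along $\gamma$.

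Suppose $\gamma:(a,b) \to M$ were inextensible with $b < \infty$. The uniform bound on $|\dot\gamma|_{\hat g}$ would force the $\hat g$-arclength of $\gamma$ on any $[a_0,b)$ to be finite, so by Hopf--Rinow for the complete metric $\hat g$ the image $\gamma([a_0,b))$ lies in a $\hat g$-compact subset of $M$; the geodesic ODE for $g$ then extends $\gamma$ past $b$, contradicting maximality. The same argument at $a$ yields $I = \R$. The only real conceptual inputs are the splitting \eqref{St1} in a form compatible with $Z = \partial_t$ and the fact that compactness of $\Sigma$ transfers to the uniform bound $A \ge \delta_1 > 0$; the conservation-law calculation and the ODE continuation are standard, so I do not anticipate a serious obstacle beyond careful bookkeeping.
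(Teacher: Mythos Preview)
Your argument is correct. The paper does not actually give a proof of this proposition; it simply cites \cite[Lemma 1.1]{A00}. Your approach---using the conserved quantities $g(\dot\gamma,\dot\gamma)$ and $g(\dot\gamma,Z)$, the uniform positivity of $A=N^2-|\eta|_h^2$ on the compact Cauchy hypersurface, and the continuation principle with respect to the complete auxiliary metric $\hat g=\dd t^2+h$---is the standard one and is essentially what the cited reference does. One minor remark: the appeal to the reverse Cauchy--Schwarz inequality in the timelike case is not strictly necessary, since the nonnegativity of the discriminant $c_2^2+Ac_1$ already follows from the fact that the quadratic inequality $A\dot t^2+2c_2\dot t-c_1\le 0$ has a solution (the actual value of $\dot t$) at each point; but invoking it does no harm.
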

For a proof see for example \cite[Lemma 1.1]{A00}.
Note that we require the Killing vector field to be complete on a stationary spacetime. This is essential in our results.
If $(M,g)$ is a spatially compact stationary globally hyperbolic spacetime then $(M,g)$ isometric to a product $\R \times \Sigma$
with metric \eqref{St1}.
It is sometimes convenient to write this metric in the form
\begin{equation} \label{STANDARD}
  g =  -N^2 \dd t^2 + h_{ij}( \dd x^i + \beta^i \dd t)(dx^j + \beta^j \dd t),
\end{equation}
where $\beta$ (the shift vector field) is the vector field obtained from $\eta$ by identifying vectors with co-vectors using $h$ the above metric, i.e. $\beta^i = h^{ij} \eta_j$ in local coordinates.  
The coefficients are independent of $t$ so that the vector field
 $Z = \frac{\partial}{\partial t}$ is a timelike Killing vector field.

We would like to understand the periodic orbits of length $T \in \R$ of the flow. A periodic trajectory of length $T$ in $\ncal_p$ is then a maximal affinely parametrised geodesic 
$\gamma: \R \supset I \to \R \times \Sigma$ (in a standard stationary spacetime), $\gamma(s) =
(t(s), \gamma_\Sigma(s))$ such that the shifted geodesic $\gamma(s + T)$ is a reparametrisation of the original one.
In other words, $\gamma_\Sigma(s + T) = \gamma_\Sigma(s)$ for all $s \in I$. Obviously such a geodesic is defined on $\R$ and corresponds to a periodic curve on $\Sigma$.

\begin{example}
Lightlike $T$-periodic trajectories of Schwarzschild spacetime have
constant radial coordinate $r = 3 M$; other lightlike geodesics
escape to infinity or meet the boundary. Schwarzschild represents
empty space outside a non-rotating spherical massive body. Periodic trajectories of Kerr spacetime are
discussed in \cite{B}. One
might consider the exterior of a sphere in a spatially compact
spacetime as well. 
\end{example}

Note that in the ultrastatic case $(M,g) = (\R \times \Sigma, -\dd t^2+h)$ the curve $\gamma$ is a lightlike geodesic if and only if $\gamma_t$ is a unit speed geodesic on
the Riemannian manifold $\Sigma$. Hence, the set of periods $T$ of periodic trajectories on $\ncal_p$ is precisely the set of lengths of periodic geodesics of the Riemannian manifold $(\Sigma,h)$.
This is, by definition, the length spectrum of $(\Sigma,h)$.

\subsection{Residue symbol on $\ncal$ and the spatial volume of a stationary spacetime}

Assume that $(M,g)$ is a spatially compact globally hyperbolic spacetime of dimension $n$. Then $\ncal$ is a symplectic cone of dimension $2n-2$, and $\ncal_p \cong \ncal/\R_+$ is a compact contact manifold of dimension $2n-3$. 
On $\ncal$ we have the Euler vector field $D$ generating the $\R_+$-action and we have the symplectic volume form $\mathrm{dV}_{\ncal} = \frac{1}{(n-1)!}\omega^{n-1}$. Interior multiplication of this volume form
by $D$ gives a non-zero $(2n-3)$-form $\alpha = \iota_D \mathrm{dV}_{\ncal}$. This form satisfies $\mathcal{L}_D \alpha = (n-1) \alpha$ and hence is homogeneous of degree $n-1$. Suppose that $f \in C^\infty(\ncal)$
is homogeneous of degree $-(n-1)$. Then the form $f \alpha$ is homogeneous of degree $0$ and therefore is the pull-back of a unique $(2n-3)$-form $\alpha_f$ on $\ncal_p$. Integrating this form over $\ncal_p$
defines the symplectic residue of $f$.

\begin{defin}
 If $f \in C^\infty(\ncal)$ is homogeneous of degree $-(n-1)$ then the symplectic residue $\mathrm{res}(f)$ of $f$ is defined by $\mathrm{res}(f) = \int_{\ncal_p} \alpha_f$.
\end{defin}

The symplectic residue was introduced in \cite{Gu85} in the context of Weyl's law in algebras that quantize symplectic cones.
Now suppose that in addition $(M,g)$ is stationary. Then the Hamiltonian $H$ generating the Killing flow $\Psi^t_\ncal$ (see Lemma \ref{K}) is homogeneous of degree $1$ and everywhere positive.
Therefore the function $H^{-n+1}$ is homogeneous of degree $-n+1$ and we can form its symplectic residue $\mathrm{res}(H^{-n+1})$.
Using homogeneity of $H$ one arrives at
\begin{gather*}
 \mathrm{res}(H^{-n+1})  = (n-1) \mathrm{Vol}(\ncal_{H\leq 1}).
\end{gather*}

It is instructive to compute this number for a standard stationary spacetime with metric of the standard  form \eqref{STANDARD}.
Then, $|g|^{\frac{1}{2}} = N |h|^{\frac{1}{2}}$ and the inverse metric on the cotangent space takes the form
$$
 g^{-1} =  N^{-2}\left( \begin{matrix} -1 & \beta \\ \beta^T &  N^2 h^{-1} -\beta \otimes \beta \end{matrix} \right).
$$
If $\xi \in T^*\Sigma$ then the lightlike future directed lift has the form $(\beta(\xi) + N |\xi|_h) \dd t + \xi$.
Since the Killing field is $\partial_t$ the set $\mathcal{N}_{H \leq 1}$ is identified in $T^* \Sigma$ with the set
$$
 (\beta(\xi) + N |\xi|_h)  \leq 1.
$$
In an orthonormal basis in $T^*_p \Sigma$ with respect to the metric $h$ we can compute the fibre volume of this set. We choose the orthonormal basis so that $\beta(\xi_1)=|\beta|$
and $\beta(\xi_k)=0$ for $k \not=1$.
Denote $\kappa=N^{-1} |\beta|$. This is a function on $\Sigma$. The level set defined by $H=1$ is an ellipsoid satisfying the equation
$$
 (1 - \kappa^2) (\xi_1 + \frac{N^{-1} \kappa}{1-{\kappa}^2})^2 + \xi_2^2 + \ldots \xi_{n-1}^2 = \frac{1}{N^2(1- \kappa^2)}.
$$
Its volume is
$$
 N^{-n+1} \mathrm{Vol}({B}_{n-1}) {(1- \kappa^2)}^{-\frac{n}{2}} = N^{-n+1} \frac{1}{n-1} \mathrm{Vol}({S}_{n-2}) {(1- \kappa^2)}^{-\frac{n}{2}} 
$$
where $\mathrm{Vol}(B_{n-1})$ is the volume of the unit ball, and $\mathrm{Vol}(S_{n-2})$ the volume of the unit sphere in $\R^{n-1}$.
Hence, integrating over $M$ we obtain
\begin{gather}
  \mathrm{res}(H^{-n+1}) =  \mathrm{Vol}({S}_{n-2}) \int_\Sigma N(x) \left(N^2(x)-|\beta|_h^2(x)\right)^{-\frac{n}{2}} \mathrm{dVol}_h,
\end{gather}
and
\begin{gather}
  \mathrm{Vol}(\ncal_{H\leq 1})  =  \mathrm{Vol}({B}_{n-1}) \int_\Sigma N(x) \left(N^2(x)-|\beta|_h^2(x)\right)^{-\frac{n}{2}} \mathrm{dVol}_h.
\end{gather}

\section{The space of smooth solutions on  globally hyperbolic spacetimes}

Globally hyperbolic spacetimes are the most natural spacetimes for which the Cauchy problem for the wave operator is well-posed.
Suppose that $(M,g)$ is globally hyperbolic and $\Sigma$ is a smooth Cauchy surface. The Cauchy problem is to find a solution $u$ of $\Box u =0$
with given initial data $(u\vert_\Sigma, \nu_\Sigma u\vert_\Sigma)$ on $\Sigma$. Here $\nu_\Sigma$ denotes the future directed unit normal derivative at $\Sigma$.
There are various equivalent ways to show that this Cauchy problem has a unique solution in suitably defined function spaces. 
One way is via the construction of parametrices and fundamental solutions.
A continuous map $F: C^\infty_0(M) \to C^\infty(M)$ is called {\it fundamental solution} if $\Box F = F \Box = \mathrm{id}_{C^\infty_0(M)}$.
A {\it parametrix} is a map $F: C^\infty_0(M) \to C^\infty(M)$ such that $\Box F = F \Box = \mathrm{id}_{C^\infty_0(M)} \textrm{ mod } C^\infty$. This means a parametrix 
is an inverse modulo smoothing operators, i.e. operators whose integral kernels are in $C^\infty(M \times M)$. By the Schwartz kernel theorem $F$ has an integral
kernel in $\mathcal{D}'(M \times M)$ which we will also denote by the letter $F$ unless there is danger of confusion. If $f,g \in C^\infty_0(M)$ then 
$F(f \otimes g)$ will denote the pairing of the distributional kernel with the test function $f \otimes g \in C^\infty_0(M \times M)$, i.e.
$$
F(f \otimes g): = \int_M f(x) (F  g)(x) \mathrm{dVol}_g(x).
$$

Obviously every fundamental solution is also a parametrix.
A fundamental solution $E_\mathrm{ret/adv}$ is called {\it retarded/advanced} if $\supp(E_\mathrm{ret/adv} f) \subset J_\pm(\supp f)$.

The key theorem is the following (see for example Theorem 3.4.7 in \cite{BGP}).
\begin{theo}
 If $(M,g)$ is a globally hyperbolic spacetime, then there exist unique retarded fundamental solutions $E_\mathrm{ret/adv}$ for $\Box$. The difference $E=E_\mathrm{ret}-E_\mathrm{adv}$ gives rise to an exact sequence
 \begin{equation}
 \begin{tikzcd}\label{DIAGRAMexact}
  0 \arrow[r] &  C^\infty_0(M) \arrow[r,"\Box"]& C^\infty_0(M) \arrow[r,"E"] & C^\infty_{sc}(M)\arrow[r,"\Box"] & C^\infty_{sc}(M) 
 \end{tikzcd}
 \end{equation} 
 Here $C^\infty_{sc}(M)$ is the space of spacelike compactly supported smooth functions, i.e. functions whose support have compact intersection with any Cauchy surface.
\end{theo}

It follows from the uniqueness and formal self-adjointness of $\Box$ that the adjoint of $E_\mathrm{ret}$ is $E_\mathrm{adv}$ and vice versa.
This allows to extend the maps to the space of distributions 
$$
 E_\mathrm{ret/adv}: \mathcal{E}'(M) \to \mathcal{D}'(M).
$$
In particular $E$ is skew-adjoint.
The way these fundamental solutions are constructed is classical. 
Parametrices can be obtained in several ways. Constructions are due to  Hadamard \cite{H1,H2} and M. Riesz \cite{R1,R2}. Contemporary expositions include for example \cite{B,F,Gun, Be, BGP}. One can also construct fundamental solutions using global Fourier integral operator calculus. Using the fact that these parametrices are Fourier integral operators one obtains global information about the mapping properties of $E$. As before let $T_0^*M$ be the set of null covectors (the bundle of light-cones in cotangent space). Then $T_0^* M \setminus 0$ is a closed conic subset in $T^*M$
and the set
\begin{equation} \label{CDEF} C = \{(x,\xi,x',\xi')  \in (T_0^* M \setminus 0)^2 \mid (x,\xi) = G^t (x',\xi') \textrm{ for some } t \in \R\} \end{equation}
defines a homogeneous canonical relation from $T^*M \setminus 0$ to $T^*M \setminus 0$. Here $G^t$ denotes the geodesic flow.

\begin{theo}
 The map $E$ is a Fourier integral operator in $I^{-\frac{3}{2}}(M \times M, C')$.
\end{theo}
See \cite[Theorem 6.5.3]{DH}. 
Suppose that $\Sigma$ is a Cauchy surface in $M$. If $f \in C^\infty_0(\Sigma)$ we can define the distributions $\delta_{\Sigma,f}$ and
$\delta'_{\Sigma,f}$ 
by
\begin{equation} \label{distcauchy}
   \delta_{\Sigma,f}(\phi) = \int_\Sigma   f(x) \phi(x) \;\mathrm{dVol}_\Sigma(x), \quad  \delta'_{\Sigma,f}(\phi) =-\int_\Sigma f(x) (\nu_\Sigma \phi)(x) \;\mathrm{dVol}_\Sigma(x).
\end{equation}
Here $\nu_\Sigma$ is the future directed normal vector field to $\Sigma$. 
By the mapping properties of Fourier integral operators in $I^{-\frac{3}{2}}(M \times M, C')$ the function
\begin{equation} \label{SOLCP}
 u = E ( \delta'_{\Sigma,f_1} + \delta_{\Sigma,f_2})
\end{equation}
is well defined and smooth. 
Green's identity applied to the causal future of $\Sigma$ shows that $u$ is the unique solution of the Cauchy problem
$$
 \Box u = 0, \quad (f_1,f_2)=(u\vert_\Sigma, \nu_\Sigma u\vert_\Sigma).
$$
Indeed, by the support properties of retarded and advanced fundamental solutions we have $u = u_+ + u_-$, where $u_\pm = E_\mathrm{ret/adv}(\delta'_{\Sigma,f_1} + \delta_{\Sigma,f_2})$ is supported in $J^\pm(\Sigma)=:M_\pm$.
If $\phi \in C^\infty_0(M)$ is an arbitrary test function, then, 
\begin{gather*}
 (\delta'_{\Sigma,f_1} + \delta_{\Sigma,f_2}, \phi)= (u_+,\Box \phi)=\int_{M_+} (u \Box \phi) \mathrm{dVol}_g \\=
 \int_{M_+} (u \Box \phi - \phi \Box u) \mathrm{dVol}_g = \int_{\Sigma} - u(x)(\nu_{\Sigma} \phi)(x) + \phi(x) (\nu_{\Sigma} u )(x) \mathrm{dVol}_\Sigma(x).
\end{gather*}
Comparison with \eqref{distcauchy} shows that
$(u |_{\Sigma},  \partial_{\nu_{\Sigma}}u )  |_{\Sigma} = (f_1,f_2)$.
The space of smooth solutions of $\Box u=0$ is naturally a symplectic space, \corri{sometimes referred to as covariant phase space structure}, with symplectic form defined by
\begin{equation} \label{SYMPLECTIC}
 \sigma(u,v) = \int_{\Sigma} (\nu_x u)(x) v(x) - v(x) (\nu_x u) \mathrm{dVol_\Sigma},
\end{equation}
where integration is over any Cauchy surface $\Sigma$, and $\nu$ denotes the future directed unit normal vector field to $\Sigma$, i.e. $\nu_x = \nu_\Sigma(x)$.
Note that the definition does not depend on this choice of Cauchy surface as one can see immediately from Green's identity.
There is a close connection of this symplectic form to the 
propagator $E$. 
Integration  by parts shows that
$$
 E(f_1 \otimes f_2) = \sigma(u,v), \quad \textrm{if } u = E(f_1), v = E(f_2).
$$

Since the operators $\Box$ are real, the fundamental solutions $E_\mathrm{ret/adv}$ and the propagator $E$ commute with complex conjugation. They therefore
can also be viewed as operators acting on real-valued functions. 

\subsection{\label{kerBoxSect}Topologies and inner products on $\ker \Box$ for globally hyperbolic spacetimes}

As explained before there is in general no distinguished positive definite inner product on the space of solutions of $\Box u=0$. For general spatially compact globally hyperbolic spacetimes
there is however a distinguished topology of a Hilbert space, which we now describe.
%\subsection{\label{FESPSECT} Finite energy solution space and the %Cauchy problem}
%In this subsection 
We assume that $M$ is a spatially compact globally hyperbolic spacetime.
For the moment we fix a foliation $M= \R \times \Sigma$ by Cauchy surfaces. Let us denote $\Sigma_t:=\{t\} \times \Sigma$.
We also fix a compact slice $M_T := [-T,T] \times \Sigma \subset M$.
We define the finite energy space $\FE^s(M_T)$ to be
$$
 \FE^s(M_T):= C([-T,T],H^s(\Sigma)) \cap C^1([-T,T],H^{s-1}(\Sigma)).
$$
Similarly define 
$$
 \FE^s(M_T,\Box) := \{ u \in   \FE^s(M_T) \mid \Box u \in L^2([-T,T],H^{s-1}(\Sigma))\}
$$
equipped with the norm
$$
 \| u \| := \| u \|_{C([-T,T],H^s)} + \| u \|_{C^1([-T,T],H^{s-1})} + \| \Box u \|_{L^2([-T,T],H^{s-1})}.
$$
This space is a Banach space.

For each $t \in [-T,T]$ there is a natural map $R_t$ from $\FE^s(M_T)$ to $H^s(\Sigma) \oplus H^{s-1}(\Sigma)$
given by
$$
 R_t u = u|_\Sigma \oplus (\nu_\Sigma u)|_\Sigma,
$$
and similarly we have a continuous map
$$
 \FE^s(M_T,\Box) \to H^s(\Sigma) \oplus H^{s-1}(\Sigma) \oplus L^2([-T,T],H^{s-1}(\Sigma)), 
 \quad u \mapsto R_t \oplus \Box u.
$$
This map is a continuous bijection of Banach spaces. Since the right hand side has the topology of a Hilbert space,
so does the left hand side. A priory the space  $\FE^s(M_T,\Box )$ and its topology depend on the choice of foliation.
However, in the case $s=1$ the topology of the space $L^2([-T,T],H^{s-1}(\Sigma))=L^2(M_T)$ is independent of the chosen
foliation. Hence, the topology on $\FE^1(M_T,\Box)$ is invariantly defined.
In particular, the closed subspace
$$
 \FE^1(M_T,\Box) \cap \ker \Box
$$
has the topology of a Hilbert space that is defined independently of the chosen foliation.
It is also isomorphic to  $H^1(\Sigma_t) \oplus L^2(\Sigma_t)$ and therefore we can identify all the spaces
$\FE^1(M_T,\Box) \cap \ker \Box$ for $T>0$.   

The above estimates can be found  for example in the book \cite{T2} or in the more geometric setting \cite{BW} (see also \cite{Ho1}).

\begin{defin}
 The space $\ker \Box$ is defined to be $\FE^1(M_T,\Box) \cap \ker \Box$ equipped with its topology inherited from $H^1(\Sigma_t) \oplus L^2(\Sigma_t)$.
 As explained above this topology is that of a Hilbert space and is independent of the chosen foliation and $T>0$. We will denote by
 $\ker_\R \Box$ the set of real valued functions $\{ u \in \ker \Box \mid u=\overline u\}$ in $\ker \Box$.
\end{defin}

It is easy to see from the explicit integral representation that symplectic form, originally defined on the space of smooth solutions of $\ker \Box$, extends continuously to the space $\ker \Box$.

%\section{Topologies and Inner products on $\ker \Box$ in the stationary case}

\subsection{\label{ESECT} Energy form for stationary spacetimes}

The inner products induced from the $H^1$ and $L^2$ inner products on a Cauchy surface $\Sigma$ depend on the Cauchy surface
and on a choice of an $H^1$-inner product on $\Sigma$. The above construction of a Hilbert space topology on $\FE^1 \cap \ker \Box$
therefore does not give an intrinsically defined Hilbert space structure.

Assume now that $(M,g)$ is a spatially compact stationary globally hyperbolic spacetime. In this case, if one drops the requirement of positive definiteness, 
one can give an intrinsically defined inner product on $\ker \Box$ using the stress energy tensor.
For $u \in C^\infty(M,\mathbb{R})$ let us define the stress-energy tensor $T(u)$ by the section in $\otimes_S T^*M$ given by
\begin{gather}
d u \otimes d u - \frac{1}{2} |du|^2 g -  \frac{1}{2} g V u^2.
\end{gather}
 Thus, $T$ is a symmetric $(0,2)$-tensor that is given in local coordinates (using Einstein's sum convention) by
$$
 T_{j k} = \nabla_j  u \nabla_k u - \frac{1}{2} g_{jk} \nabla_m u \nabla^m u -  \frac{1}{2} g_{jk} V u^2 .
$$
The divergence of $T$ satisfies
$$
 \nabla_j T^{jk} = -(\nabla^k  u)((\Box_g +V) u) -\frac{1}{2} u^2 \nabla^k (V).
$$
Hence, if $\Box = \Box_g + V$ and $\Box u = 0$, then
$$
 \nabla_j T^{jk} = -\frac{1}{2} u^2 \nabla^k (V).
$$
If $Z$ is a Killing vector field that commutes with $\Box$ this means that $Z V =0$ and therefore the covector field $T(u)(Z)$ is divergence free if $\Box u=0$. Indeed,
$$
 \nabla_j (T^{jk} Z_k) =  \frac{1}{2} \left( -|u|^2 Z (V) + T^{jk} (\nabla_j Z_k + \nabla_k Z_j) \right)=0.
$$

\begin{defin} The energy (quadratic) form of an element in $\ker \Box \cap C^\infty(M,\R)$ is defined by
$$Q(u) = \int_{\Sigma} \langle T(u)(Z), \nu \rangle \dd S$$
where $\nu$ is the unit normal to $\Sigma$, a spacelike
hypersurface and $Z$ is the timelike Killing vector field.
\end{defin}
This quadratic form is independent of the chosen Cauchy surface.
For a standard stationary spacetime with metric of the form \eqref{STANDARD}
a lengthy computation shows that
$$
 Q(u) = \frac{1}{2}\int_\Sigma \frac{1}{N} \left(  |\partial_t u|^2 + (N^2 h^{ij} - \beta^{i}\beta^{j}) (\partial_i u)(\partial_j u)  + V |u|^2 \right) \mathrm{dVol}_h.
$$
Therefore $Q(u)$ is actually defined on the space of real valued functions in $\ker \Box$ and it
can be extended and polarized to define a (possibly degenerate, possibly indefinite) hermitian form $Q(\cdot,\cdot)$ on $\ker \Box$.
\begin{lem}
 The energy quadratic form is invariant under the Killing flow, i.e.
 $$
  Q(e^{\rmi t D_Z} u,e^{\rmi t D_Z} u) = Q(u,u)
 $$
 for all $u \in \ker \Box$.
\end{lem}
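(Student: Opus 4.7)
The plan is to exploit three facts: that $\Psi^t=\exp tZ$ is an isometry of $(M,g)$ preserving $V$, that the current $T(u)(Z)$ is divergence--free on $\ker\Box$ (already established in Section \ref{ESECT}), and that $Q(u)$ is accordingly independent of the Cauchy surface of integration. Granting these, the proof is essentially a change of variables in which the isometry $\Psi^t$ moves the Cauchy surface $\Sigma$ to another Cauchy surface $\Psi^t(\Sigma)$.

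First I would check naturality of the stress--energy tensor under the flow. Since $Z$ is Killing, $(\Psi^t)^*g=g$; since $D_ZV=0$, we have $(\Psi^t)^*V=V$; and $\Psi^t_*Z=Z$ because $\Psi^t$ is the flow of $Z$ itself. The tensor $T(u)$ is built solely by natural operations (exterior derivative, the metric, and multiplication by $V u^2$) from $u$, $g$, $V$, so for $v=(\Psi^t)^*u=u\circ\Psi^t$ one gets
$$
 T(v)=(\Psi^t)^*T(u)
$$
as symmetric $(0,2)$-tensors. Contracting with $Z$ and using $(\Psi^t)_*Z=Z$, the covector field $T(v)(Z,\cdot)$ equals $(\Psi^t)^*\bigl(T(u)(Z,\cdot)\bigr)$, and raising the index by the invariant metric gives an analogous identity for the corresponding divergence--free vector field $J(v)=(\Psi^t)^*J(u)$.

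Next I would combine these with the Cauchy--surface independence of the flux. Writing $Q(v)=\int_{\Sigma}\iota_{J(v)}\,\mathrm{dVol}_g$, the naturality above and invariance of the volume form give
$$
 Q\bigl((\Psi^t)^*u\bigr) \;=\; \int_{\Sigma}(\Psi^t)^*\bigl(\iota_{J(u)}\mathrm{dVol}_g\bigr) \;=\; \int_{\Psi^t(\Sigma)}\iota_{J(u)}\mathrm{dVol}_g.
$$
Since $\Psi^t$ is an isometry and $\Sigma$ is a Cauchy surface, $\Psi^t(\Sigma)$ is also a Cauchy surface; divergence--freeness of $J(u)$ on $\ker\Box$ then yields $\int_{\Psi^t(\Sigma)}\iota_{J(u)}\mathrm{dVol}_g=Q(u)$. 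This proves invariance on $\ker_{\R}\Box\cap C^\infty(M,\R)$.

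Finally I would extend to all of $\ker\Box$. Polarization gives a Hermitian form on the smooth complex solutions, and the explicit formula for $Q$ on a Cauchy slice $\Sigma$ shows that $Q$ is continuous in the $H^1(\Sigma)\oplus L^2(\Sigma)$ topology that defines the Hilbert space $\ker\Box$; since $e^{\rmi tD_Z}$ acts continuously on this space and smooth solutions are dense, the identity $Q(e^{\rmi tD_Z}u,e^{\rmi tD_Z}u)=Q(u,u)$ extends to the whole space by continuity. The main (very mild) obstacle is simply bookkeeping conventions for pullback versus pushforward and verifying the naturality of the index-raising step, together with justifying the passage from smooth real solutions to the full Hilbert completion.
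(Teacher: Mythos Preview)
Your argument is correct, but it proceeds by a different route than the paper. The paper differentiates in $t$: using $\mathcal{L}_Z g=0$, $\mathcal{L}_Z V=0$, $\mathcal{L}_Z Z=0$ and $[\mathcal{L}_Z,*]=0$, it identifies $\frac{d}{dt}Q(\ee^{\rmi tD_Z}u)$ with the integral over $\Sigma$ of the Lie derivative $\mathcal{L}_Z\bigl(*\,T(u)(Z)\bigr)$; by Cartan's formula this Lie derivative of a closed $(n-1)$-form is exact, and Stokes on the compact $\Sigma$ gives zero. Your proof is instead a finite change-of-variables: naturality of $T(\cdot)$ under the isometry $\Psi^t$ converts $Q(\Psi^{t*}u)$ computed on $\Sigma$ into the flux of $T(u)(Z)$ through $\Psi^t(\Sigma)$, and you then invoke the Cauchy-surface independence of $Q$ already recorded in Section~\ref{ESECT}. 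Both arguments ultimately rest on the divergence theorem, but yours packages it via the previously established surface independence rather than redoing Stokes infinitesimally; this makes your version slightly more conceptual and avoids computing the Lie derivative explicitly. The paper's infinitesimal version, on the other hand, never needs to verify that $\Psi^t(\Sigma)$ is again a Cauchy surface.
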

\begin{proof}
 Since $Z$ is Killing we have $\mathcal{L}_Z Z =0, \mathcal{L}_Z g = 0$, $\mathcal{L}_Z V=0$, and $[\mathcal{L}_Z,*]=0$, where $*$ is the Hodge star operator.
 By the product rule that $Q(u,\mathcal{L}_Z u)+Q(\mathcal{L}_Z u,u)$ is the integral over the Lie-derivative of the closed $(n-1)$-form $* T_V(u)(Z)$.
 This Lie derivative is $\mathcal{L}_Z * T_V(u)(Z)$ equals the exact form $d T_V(u)(Z,Z)$. The integral of this exact form over $\Sigma$ vanishes by Stoke's theorem. 
 This shows that $\frac{d}{dt} Q(e^{\rmi t D_Z} u)=0$.
\end{proof}

Note that for every $(f_1,f) \in H^1(\Sigma) \oplus L^2(\Sigma)$ there exists a unique solution $u \in \ker \Box$ such that $u_t |_\Sigma=f_2$ and $u |_\Sigma = f_1$. 
Under this identification the quadratic form $Q$ is equivalent to the quadratic form $q=q_1 \oplus q_2$ on $H^1(\Sigma) \oplus L^2(\Sigma)$ defined by
\begin{gather}
q_1(f_1) = \frac{1}{2}\int_\Sigma \frac{1}{N} \left(   (N^2 h^{ij} - \beta^{i}\beta^{j}) (\nabla_i \overline f_1)(\nabla_j f_1)   + V |f_1|^2 \right) \mathrm{dVol}_h, \\
q_2(f_2) =  \frac{1}{2}\int_\Sigma \frac{1}{N}   |f_2|^2 \mathrm{dVol}_h.
\end{gather}
Note that $q_2$ is positive definite, whereas $q_1$ is the quadratic form associated to an elliptic self-adjoint operator $P$ with positive definite principal symbol.
Hence, the null space
$\ker Q = \{ u \in \ker \Box \mid Q(u,\cdot)=0$  is finite dimensional and is isomorphic to the kernel of the operator $P$. 

\begin{lem} \label{luckyzero}
 If $u \in \ker Q$, then $D_Z u =0$.
\end{lem}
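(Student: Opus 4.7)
The plan is to show, for any $u \in \ker Q$, that both Cauchy data of $D_Z u$ on a Cauchy surface $\Sigma$ vanish, and then to invoke uniqueness of the Cauchy problem for $\Box$. The latter is applicable because $[D_Z,\Box]=0$ ensures $\Box(D_Z u)=0$.

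First I would verify that $\ker Q$ is itself invariant under $D_Z$. Polarising the identity from the previous lemma to the sesquilinear form $Q(\ee^{\rmi t D_Z}u,\ee^{\rmi t D_Z}v)=Q(u,v)$ and differentiating at $t=0$ yields $Q(D_Z u, v)=\pm Q(u, D_Z v)$ (the sign depending only on the convention for sesquilinearity in the first vs.\ second argument). Hence, for $u \in \ker Q$ and any $v \in \ker\Box$, one has $Q(D_Z u, v)=\pm Q(u, D_Z v)=0$, so $D_Z u \in \ker Q$.

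Next, I would exploit the explicit decomposition $Q \cong q_1 \oplus q_2$ under the Cauchy data identification $\ker\Box \cong H^1(\Sigma)\oplus L^2(\Sigma)$. The form $q_2(f)=\tfrac{1}{2}\int_\Sigma N^{-1}|f|^2\,\mathrm{dVol}_h$ is \emph{positive definite}, so any $u \in \ker Q$ must have Cauchy data of the form $(g,0)$ with $g \in \ker P$; in particular $\partial_t u|_\Sigma = 0$. Since $Z=\partial_t$ in the standard stationary coordinates \eqref{STANDARD}, this already gives $(D_Z u)|_\Sigma = \tfrac{1}{\rmi}\partial_t u|_\Sigma = 0$. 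Applying the same observation to $D_Z u \in \ker Q$ (provided by the first step) gives also $\partial_t(D_Z u)|_\Sigma = 0$. Thus both Cauchy data of $D_Z u$ vanish on $\Sigma$, and uniqueness of the Cauchy problem for $\Box$ yields $D_Z u = 0$.

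I do not anticipate any serious obstacle; the only slightly subtle point is keeping the sesquilinearity convention consistent in the first step, but this only affects an overall sign and does not alter the conclusion that $D_Z$ preserves the radical of $Q$. Everything else is immediate from the explicit form of $Q$ recorded just before the lemma and from well-posedness of the Cauchy problem on globally hyperbolic spacetimes.
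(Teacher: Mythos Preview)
Your argument is correct. Both your proof and the paper's rely on the same key observation, namely that the positive definiteness of $q_2$ forces $\partial_t u|_\Sigma=0$ whenever $u\in\ker Q$. The difference is in how you upgrade this to $D_Z u=0$ on all of $M$.

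You fix a single Cauchy surface, first prove that $D_Z$ preserves the radical of $Q$ (via the invariance of $Q$ under the flow), then use this to kill \emph{both} Cauchy data of $D_Z u$ on that surface and invoke uniqueness for the Cauchy problem. The paper instead bypasses the invariance step entirely: since the decomposition $Q\cong q_1\oplus q_2$ is available for every leaf $\Sigma_t$ of the Killing foliation, and $Q$ itself is independent of the leaf, the conclusion $\partial_t u|_{\Sigma_t}=0$ holds for every $t$, giving $\mathcal{L}_Z u=0$ pointwise on $M=\bigcup_t\Sigma_t$ immediately. Your route is slightly longer but has the mild advantage that it isolates the useful fact ``$\ker Q$ is $D_Z$-invariant'' explicitly; the paper's route is a one-liner but implicitly uses that same invariance through the flow-independence of $Q$.
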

\begin{proof}
 The above decomposition of $Q$ into direct summands $q_1$ and $q_2$ shows that if $u \in \ker Q$, then $u_t|_\Sigma=0$. But this must hold for any Cauchy surface $\Sigma$,
 so $\mathcal{L}_Z u =0$ everywhere.
\end{proof}

\subsection{Relation between the symplectic form and the energy form}

A direct computation shows that for $u,v \in \ker \Box$ we have
$$
 \sigma(D_Z u, v) = -\sigma(u, D_Z v). 
$$
In particular this implies that if $u$ and $v$ are (generalised) eigenvectors of $D_Z$ with eigenvalues $\mu$ and $\lambda$, then either $\sigma(u,v)=0$ or $\lambda = - \mu$.
The following Lemma is well known. 
\begin{lem} \label{Lemma12}
 Suppose $u,v \in \ker \Box$. Then
 $$
  Q(u,v)=\frac{\rmi}{2}  \sigma(\bar u, D_Z v) = \frac{1}{2} \sigma(\bar u, \mathcal{L}_Z v).
 $$
\end{lem}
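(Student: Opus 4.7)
The second equality is immediate from $\mathcal{L}_Z = \rmi D_Z$ and the $\C$-bilinearity of $\sigma$, so it suffices to prove
\[
2Q(u,v) = \sigma(\bar u, \mathcal{L}_Z v).
\]
Observe that $\mathcal{L}_Z v \in \ker \Box$ by \eqref{COMMUTE}, so the right-hand side is well defined. The plan is to represent the pointwise difference of the two integrands as the integral over $\Sigma$ of an exact $(n-1)$-form on $M$; Stokes' theorem on the compact boundaryless Cauchy hypersurface $\Sigma$ then yields the equality. Concretely, introducing the auxiliary vector field $A^j := \bar u\,\nabla^j v$, I set
\[
K^j := 2\,T^{jk}(\bar u, v)\,Z_k \;-\; \bigl[(\nabla^j \bar u)\,\mathcal{L}_Z v - \bar u\,\nabla^j\mathcal{L}_Z v\bigr],
\]
so that $2Q(u,v) - \sigma(\bar u, \mathcal{L}_Z v) = \int_\Sigma K^j\nu_j\,dS$. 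The central algebraic claim is that
\[
K^j = (\mathcal{L}_Z A)^j - Z^j\,\mathrm{div}(A).
\]

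To prove this, first expand $2T^{jk}(\bar u, v)Z_k = (\nabla^j\bar u)\mathcal{L}_Z v + (\mathcal{L}_Z\bar u)\nabla^j v - Z^j L$, where $L := \nabla_m\bar u\,\nabla^m v + V\bar u v$, and expand $\nabla^j\mathcal{L}_Z v = (\nabla^j Z^k)\nabla_k v + Z^k\,\nabla^j\nabla_k v$ using the symmetry of the Hessian of a function. Direct substitution gives $K^j = (\mathcal{L}_Z\bar u)\nabla^j v - Z^j L + \bar u(\nabla^j Z^k)\nabla_k v + \bar u Z^k\nabla^j\nabla_k v$. Comparing with $(\mathcal{L}_Z A)^j = (\mathcal{L}_Z\bar u)\nabla^j v + \bar u Z^k\nabla^j\nabla_k v - \bar u(\nabla_k Z^j)\nabla^k v$, the two terms involving derivatives of $Z$ cancel via the Killing antisymmetry $\nabla^j Z^k + \nabla^k Z^j = 0$, leaving $K^j - (\mathcal{L}_Z A)^j = -Z^j L$. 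Finally, the wave equation $\Box v = 0$ gives $\nabla_j\nabla^j v = V v$, whence $\mathrm{div}(A) = \nabla_m\bar u\,\nabla^m v + \bar u V v = L$, completing the claim.

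The proof concludes with a Cartan-calculus identity. Because $Z$ is Killing, $\mathcal{L}_Z\mathrm{dVol}_g = 0$, so $\iota_{\mathcal{L}_Z A}\mathrm{dVol}_g = \mathcal{L}_Z(\iota_A\mathrm{dVol}_g)$. Applying Cartan's magic formula $\mathcal{L}_Z = d\iota_Z + \iota_Z d$ together with the standard identity $d(\iota_A\mathrm{dVol}_g) = \mathrm{div}(A)\mathrm{dVol}_g$ yields
\[
\iota_{\mathcal{L}_Z A}\mathrm{dVol}_g = d(\iota_Z\iota_A\mathrm{dVol}_g) + \mathrm{div}(A)\,\iota_Z\mathrm{dVol}_g,
\]
so $\iota_K\mathrm{dVol}_g = d(\iota_Z\iota_A\mathrm{dVol}_g)$ is exact on $M$. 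Pulling back to $\Sigma$ and applying Stokes' theorem on a compact boundaryless manifold gives $\int_\Sigma K^j\nu_j\,dS = 0$, which is the desired identity. The only non-routine insight is that the wave equation is precisely what upgrades the scalar $L$ appearing in the stress-energy tensor to the divergence of the auxiliary vector field $A = \bar u\,\nabla v$, absorbing the otherwise-dangling $Z^j L$ term into Cartan's formula and making the whole combination exact.
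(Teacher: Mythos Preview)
Your proof is correct and follows essentially the same strategy as the paper's: both show that the difference of the two integrands equals the restriction to $\Sigma$ of an exact $(n-1)$-form on $M$, using the Killing equation and Cartan's formula. The paper reduces by polarization to the real quadratic case and manipulates the form $*((\mathcal{L}_Z u)\,du - u\,d\mathcal{L}_Z u)$ directly, while you work with the sesquilinear version throughout in index notation, organizing the computation around the auxiliary vector field $A^j=\bar u\,\nabla^j v$ and the clean identity $K^j=(\mathcal{L}_Z A)^j - Z^j\,\mathrm{div}(A)$; these are notational rather than substantive differences, and your version makes the role of the wave equation (turning $L$ into $\mathrm{div}(A)$) slightly more transparent.
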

\begin{proof}
 Note that $b(u,v))=\sigma(\bar u,\mathcal{L}_Z v)$ is a hermitian form that is obtained by extending the real quadratic form $b_\mathbb{R}(u,v))=\sigma(u,\mathcal{L}_Z v)$
 for real valued functions. We can therefore assume without loss of generality that $u,v$ are real-valued.
 By the polarization identity it is therefore sufficient to show that
 $2Q(u) = \sigma(u, \mathcal{L}_Z u)$ for any $u \in \ker \Box$. By continuity we can assume that $u \in \ker \Box \cap C^\infty$.
 Note that $\sigma(u, \mathcal{L}_Z u)$ is obtained by integrating the $n-1$-form $* \left( (\mathcal{L}_Z u) d u   -   u (d \mathcal{L}_Z u) \right)$.
 Now note that, using that $\mathcal{L}_Z$ commutes with the Hodge star operator $*$ and with the differential,
 \begin{gather*}
  * \left( (\mathcal{L}_Z u) d u   -    u (d \mathcal{L}_Z u) \right) = 2 * \left( (\mathcal{L}_Z u) d u \right) - \mathcal{L}_Z * (u \;du)=\\=
  2 * \left( (\mathcal{L}_Z u) du \right) - d \left( \iota_Z * (u \;du) \right) -  \iota_Z  d (u * du) = \\ =
  2 * \left( (\mathcal{L}_Z u) du \right) - d \left( \iota_Z * (u \;du) \right) -  \iota_Z \left( d u \wedge * du \right) + \iota_Z ( u * \delta du)=\\=
  2 * \left( (\mathcal{L}_Z u) du \right) - d \left( \iota_Z * (u \;du) \right) -  \iota_Z \left( d u \wedge * du \right) + \iota_Z  (u * \Box_g u).
   \end{gather*}
   The exact $n-1$-form $d \left( \iota_Z * (u \;du) \right)$ integrates to zero over any Cauchy surface and therefore we are left with
   \begin{gather*}
    \int_\Sigma * \left( (\mathcal{L}_Z u) du   -    u (d \mathcal{L}_Z u) \right) = \int_\Sigma \left( 2 d u(Z) d u (\nu) - g(\nu,Z) | du |_g - V |u|^2 \right)\mathrm{dVol}_{\Sigma}=\\=
    2 \int_\Sigma T_{jk} Z^j \nu^k \mathrm{dVol}_{\Sigma}.
   \end{gather*}
\end{proof}

\subsection{Krein space structure of $\ker \Box$ for stationary spacetimes}

Since the energy form is not necessarily positive definite a proper treatment of the spectrum of $D_Z$ requires a form of spectral theory of operators in Krein spaces.
A Krein-space is, by definition, an indefinite inner product space $(\mathcal{K},[\cdot,\cdot]) $ with the topology of a Hilbert space and the property that 
there exists a bounded operator $\mathcal{J}: \mathcal{K} \to \mathcal{K}$ with $\mathcal{J}^2=\mathrm{id}$ and such that $[\cdot,\mathcal{J}\cdot]$ is a Hilbert space inner product defining the topology. 
A choice of such a fundamental symmetry $\mathcal{J}$ gives a splitting
of the Krein space into positive and negative definite subspaces. A choice of $J$ and a choice of splitting is non-unique. If the dimension of the negative definite subspace with respect to any (and hence with respect to all) splitting is finite dimensional then the Krein space is called Pontryagin space.
Up to a certain point the theory of closed operators is parallel to the theory of closed operators in Hilbert spaces. The Krein-adjoint $A^+$ of a densely defined operator $A: \mathcal{K} \to \mathcal{K}$
can be defined as usual by $[A^+ \psi, \cdot] = [\psi, A\cdot]$ with domain consisting of these $\psi$ for which the right hand side extends to a bounded linear functional on $\mathcal{K}$.
An operator $A$ is called Krein-self-adjoint if $A^+=A$. 
An example of a Pontryagin space is given by $H^1(\Sigma)/\ker P$ with inner product $[u,v]_P = \langle u, P v \rangle_{L^2(\Sigma)}$, where $P$ is the self-adjoint elliptic differential operator with positive principal symbol introduced before on a compact Cauchy surface $\Sigma$. An example of a fundamental symmetry is the operator $J$ that commutes with $P$
and restricts to $-1$ on the non-positive eigenspaces of $P$ and to $+1$ on the positive ones. Since $P J = J P$ is still a second order elliptic pseudo-differential operator that differs from $P$ by a smoothing operator the intrinsic Krein space topology of $H^1(\Sigma)/\ker P$ is the quotient topology.
Keeping in mind the identification $\ker \Box$ with $H^1(\Sigma) \oplus L^2(\Sigma)$ and the explicit formula for the energy form we have shown:
\begin{prop} Suppose $(M,g)$ is a spatially compact globally hyperbolic stationary spacetime.
  Then the quotient space $\ker \Box / \ker Q$ with the energy form is a Pontryagin space. Its intrinsically defined topology coincides with the quotient topology.
  The dimension of this negative definite subspace equals the number of negative eigenvalues of the operator $P$ with multiplicities.
\end{prop}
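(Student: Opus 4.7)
The plan is to reduce the proposition to the spectral structure of the elliptic operator $P$ that appears in the explicit formula for $Q$ in Section~\ref{ESECT}. I would proceed in four steps.

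First, I would identify $\ker\Box$ with $H^1(\Sigma)\oplus L^2(\Sigma)$ by Cauchy data on a chosen Cauchy surface $\Sigma$, under which $Q$ pulls back to $q_1\oplus q_2$. Because $Z$ is timelike, the condition $g(Z,Z)<0$ reads $|\beta|_h<N$, so the matrix $N^2h^{ij}-\beta^i\beta^j$ is pointwise positive definite; combined with $1/N$ being bounded between positive constants on the compact $\Sigma$, this makes $q_2$ positive definite and topologically equivalent to the standard $L^2$ inner product, and $q_1$ is then the hermitian form of a formally self-adjoint second-order elliptic differential operator $P$ on $\Sigma$ with positive principal symbol.

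Second, I would apply elliptic theory on the compact manifold $\Sigma$: $P$ has discrete real spectrum accumulating only at $+\infty$, with finite-dimensional kernel and only finitely many negative eigenvalues. Since $q_2$ is strictly positive definite on $L^2$, any element of $\ker Q$ must have vanishing $L^2$-component, so
$$
\ker Q \;\cong\; \ker P \oplus \{0\}, \qquad \ker\Box/\ker Q \;\cong\; (H^1(\Sigma)/\ker P)\oplus L^2(\Sigma).
$$

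Third, I would construct a fundamental symmetry. Let $J_P$ be the bounded $L^2$-involution that commutes with $P$ and equals $\pm 1$ on the positive/negative spectral subspaces of $P$, so that $PJ_P=|P|$ on $(\ker P)^\perp$. Setting $\mathcal{J}:=J_P\oplus\mathrm{id}$ on the quotient gives $\mathcal{J}^2=\mathrm{id}$, and the form
$$
[u,\mathcal{J}v] \;=\; \langle u,|P|v\rangle_{L^2} + q_2(u,v)
$$
is positive definite. The $(-1)$-eigenspace of $\mathcal{J}$ is finite-dimensional of dimension equal to the number of negative eigenvalues of $P$ counted with multiplicity, giving the Pontryagin structure with the stated negative index.

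The fourth and main step is to verify that the intrinsic Pontryagin topology agrees with the quotient topology. The $L^2$-factor is immediate from the norm equivalence of $q_2$ and the standard $L^2$ inner product. On the $H^1(\Sigma)/\ker P$-factor, since the spectral projector of $P$ onto its finitely many zero and negative eigenspaces is a finite-rank smoothing operator, $J_P-\mathrm{id}$ is smoothing on $(\ker P)^\perp$; therefore $|P|=PJ_P$ differs from $P$ by a smoothing operator and so is a positive elliptic pseudo-differential operator of order two with trivial kernel on the complement. By G\aa rding's inequality, $\sqrt{\langle u,|P|u\rangle}$ then defines a norm equivalent to the quotient $H^1$-norm. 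The principal obstacle lies precisely in this last step: one needs the spectral sign $J_P$ to be benign enough as an operator that $|P|$ inherits second-order ellipticity, which is secured by the finite-rank-smoothing remark made in the example immediately preceding the proposition.
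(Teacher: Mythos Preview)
Your proposal is correct and follows essentially the same route as the paper, which treats the proposition as an immediate consequence of the preceding discussion: the identification $\ker\Box\cong H^1(\Sigma)\oplus L^2(\Sigma)$, the splitting $Q=q_1\oplus q_2$ with $q_1$ the form of the elliptic operator $P$, and the observation that the spectral sign $J_P$ differs from the identity by a finite-rank smoothing operator so that $PJ_P$ remains second-order elliptic. You have simply organized that discussion into explicit steps and supplied a few details (the timelike condition $|\beta|_h<N$ giving positivity of the principal symbol, and the appeal to G\aa rding's inequality for the norm equivalence) that the paper leaves implicit.
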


 If $V \geq 0$ and $V(x)>0$ for some point $x \in M$ then $Q$ is positive definite. If $V=0$ then $Q$ is positive and has a one dimensional null space spanned by the constant function.

\section{\label{WTSECT} Wave-Trace on $\ker \Box$ for stationary spacetimes}

Since the operator $e^{\mathrm{i}D_Z t}$ commutes with $\Box$
and continuously maps Cauchy surfaces to Cauchy surfaces it restricts to a strongly continuous one parameter group $U(t)$
on $\ker \Box$. Since $e^{\mathrm{i}D_Z t}$ commutes with $\Box$ is also commutes with $E_\mathrm{ret}, E_\mathrm{adv},$ and $E$.

Recall that if $T$ is an operator acting on a Hilbert space then the property of $T$ being in the ideal of compact operators or in a Schatten ideal depends only on the topology of the Hilbert space and not on the inner product. Hence, the ideals of compact and trace-class operators on $\ker \Box$ are well-defined on $\ker \Box$. \corri{In particular the notions of trace-class operators and their trace do not depend on a choice of Cauchy surface.}

\begin{theo}\label{UphiFORM}
 Suppose that $\phi \in C^\infty_0(\R)$. Then the operator 
 $$
  U_\phi = \int_\R \phi(t) U(t) \dd t:   \ker \Box \to  \ker \Box
 $$
 is trace-class, and its trace equals
 $$
  \Tr(U_\phi)= \int_\Sigma \int_\R \phi(t) \left(  \nu_x \ee^{\mathrm{i}(D_Z)_x t}E(x,y) -  \nu_y \ee^{\mathrm{i}(D_Z)_x t}E(x,y)   \right) \dd t |_{y=x} \;\mathrm{dV}_\Sigma(x).
 $$
\end{theo}
Above, $\ker \Box$ is understood to be equipped with the norm of  $ \FE^1(M_T,\Box)$.

\begin{proof}
Fix a Cauchy surface $\Sigma$ to identify $\ker \Box$ with $H^1(\Sigma) \oplus L^2(\Sigma)$. Denote by $R$ the corresponding restriction map
$$
 R : \ker \Box \to H^1(\Sigma) \oplus L^2(\Sigma).
$$
One can describe the induced map $V(t) = R \circ U(t) \circ R^{-1}$ as a Fourier integral operator as follows.
The surface $\Sigma_t=\Phi_t \Sigma$ is again a Cauchy surface and we therefore obtain a foliation of a compact subset $M_T=\cup_{t \in [-T,T]} \Sigma$
of $M$. We can use this foliation to identify $M_T$ as a smooth manifold with the product $[-T,T] \times \Sigma$. This gives a global time coordinate $t$ on $M_T$
and the vector field $Z$ is given by $\partial_t$. The unit-normal to $\Sigma$ defines a vector field $\nu$ on $M_T$.
 The map $V(t)$ is then identified with the Cauchy evolution map
\begin{equation} \label{CDEVOLMAP}
 V(t) = R_t \circ R^{-1}.
\end{equation}
In $H^1(\Sigma) \oplus L^2(\Sigma)$ let $\mathrm{pr}_1$ and $\mathrm{pr}_2$ be the projections onto the components.
Then with 
$$
 V_\phi = \int_\R \phi(t) V(t) \dd t
$$
we have
$$
 \mathrm{pr}_1 V_\phi = I^*( \phi R^{-1}), \quad  \mathrm{pr}_2 V_\phi = I^*( \nu(\phi R^{-1}))
$$
Here $I_*$ is the push forward map that amounts to integration over $t$.
Since the wavefront set of $R^{-1}$ contains only light-like vectors in the second variable the push-forward results in a smooth kernel.
Hence, both maps $ I^*( \phi R^{-1})$ and $I^*( n(\phi R^{-1}))$ have smooth integral kernels and are therefore trace-class on all Sobolev spaces.

In the following consider the map $V_\phi: H^1(\Sigma) \oplus L^2(\Sigma) \to H^1(\Sigma) \oplus L^2(\Sigma)$ as a block matrix
$$
 V_\phi = \left( \begin{matrix} V_{11} & V_{12} \\ V_{21} & V_{22} \end{matrix} \right).
$$

Hence, $\Tr(V_\phi) = \Tr_{H^1}(V_{11}) + \Tr_{L^2}(V_{22})$. Now it is an easy observation that the trace on a smoothing operator is the same on every Sobolev space, in particular we have
$ \Tr_{H^1}(V_{11}) =  \Tr_{L^2}(V_{11})$.
We can use the fundamental solution $E$ to express the integral kernels of the maps $V_{11}$ and $V_{22}$ as follows.
\begin{gather*}
 V_{11}(x,y) =  -\int_\R   \phi(t) \ee^{\mathrm{i}(D_Z)_x t} \nu_y E(x,y) \dd t,\\
 V_{22}(x,y) =  \int_\R  \phi(t) \nu_x \ee^{\mathrm{i}(D_Z)_x t}E(x,y) \dd t.
 \end{gather*}
Integration over the diagonal yields the claimed form for the trace.
\end{proof}

\begin{rem}
 Instead of $\mathrm{FE}^1(M_T,\Box)$ we could also have used $\mathrm{FE}^s(M_T,\Box)$ for another $s \in \R$ to define the trace. Although the finite energy  spaces induce different topologies on the space of solutions the trace of a smoothing operator is independent of $s$.
\end{rem}

The above means that $\Tr(U(t))$ exists as a distribution in $\mathcal{D}'(\R)$ and is equal to
\begin{equation} \label{TRUtDEF}
 \Tr(U(t)) = \int_\Sigma \left(  \nu_x \ee^{\mathrm{i}(D_Z)_x t}E(x,y) -  \nu_y \ee^{\mathrm{i}(D_Z)_x t}E(x,y)   \right) |_{y=x} \;\mathrm{dV}_\Sigma(x).
\end{equation}
There is a more coordinate invariant way to write this expression, namely,
\begin{equation} \label{TRUtDEF2} \begin{array}{ll}
 \Tr(U(t)) =  \int_\Sigma * \left( \dd_x E_t(x,y) - \dd_y E_t(x,y) \right)|_{y=x},
% \\  E_t(x,y) = \ee^{-\mathrm{i}(D_Z)_x t} E(x,y).
\end{array} \end{equation}
where $*$ is the Hodge star operator on $M$ and where
\begin{equation} \label{EtDEF} E_t(x,y) = \ee^{\mathrm{i}(D_Z)_x t} E(x,y). \end{equation}
Note that the form $* \left( \dd_x E_t(x,y) - \dd_y E_t(x,y) \right)|_{y=x}$, with values in $\mathcal{D}'(\R)$, is closed as
\begin{gather*}
 \delta\left(  \dd_x E_t(x,y) - \dd_y E_t(x,y) \right)|_{y=x} = \left( \square_x E_t(x,y) - \square_y E_t(x,y)  \right)|_{y=x} \\= - \left( v(x) E_t(x,y) + v(y) E_t(x,y)  \right)|_{y=x} =0.
\end{gather*}
Therefore, it can be integrated over the cycle $\Sigma$.
Since all Cauchy surfaces are homologuous, the integral is independent of the chosen Cauchy surface.

Since $E$ is skew-symmetric and commutes with the flow
one obtains
$$
 E_t(x,y) = -E_{-t}(y,x).
$$
Hence, we also have
\begin{cor} The distributional trace defined above equals
\begin{equation} \label{TRUtDEF3}
 \Tr(U(t)) =  \int_\Sigma * \left( d_x (E_t(x,y) + E_{-t}(x,y)) \right) |_{y=x}.
 \end{equation}
\end{cor}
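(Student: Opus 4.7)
The plan is to take \eqref{TRUtDEF2} and rewrite the $-d_y E_t(x,y)|_{y=x}$ contribution as $d_x E_{-t}(x,y)|_{y=x}$ using the identity $E_t(x,y) = -E_{-t}(y,x)$ that was just established from the skew-symmetry $E(x,y)=-E(y,x)$ and the Killing-flow invariance $E(\Psi^t x,\Psi^t y)=E(x,y)$.

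The key step is a transposition of differentiation under the swap of arguments. Differentiating the identity $E_t(x,y) = -E_{-t}(y,x)$ in $y$ gives
\[
d_y E_t(x,y) \;=\; -\,(d_1 E_{-t})(y,x),
\]
where $d_1$ denotes differentiation in the first slot of $E_{-t}$. Restricting to the diagonal $y=x$ and relabeling the slot variable, this becomes $d_y E_t(x,y)|_{y=x} = -\,d_x E_{-t}(x,y)|_{y=x}$. Substituting into \eqref{TRUtDEF2} yields
\[
\bigl(d_x E_t - d_y E_t\bigr)(x,y)\big|_{y=x} \;=\; d_x\bigl(E_t(x,y) + E_{-t}(x,y)\bigr)\big|_{y=x},
\]
and integrating the Hodge dual of this $(n{-}1)$-form over $\Sigma$ gives the corollary.

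The one point that needs a little care is that these manipulations involve restriction of distributional kernels to the diagonal, so they are not literal pointwise identities. However, the wavefront analysis already used in the proof of Theorem \ref{UphiFORM} shows that after pairing with a test function $\phi(t)\in C^\infty_0(\R)$ the relevant pushforward operators are smoothing on $\Sigma$, so the above identity makes sense as an identity of distributions in $t$ and can be differentiated and restricted slot-by-slot. This is the only technical subtlety; once it is granted, the corollary is a direct algebraic consequence of $E_t(x,y) = -E_{-t}(y,x)$ together with \eqref{TRUtDEF2}.
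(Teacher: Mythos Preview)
Your proposal is correct and follows exactly the (implicit) argument in the paper: the corollary is stated immediately after the identity $E_t(x,y)=-E_{-t}(y,x)$ with no further proof, since substituting this into \eqref{TRUtDEF2} converts the $-d_y E_t$ term into $d_x E_{-t}$ on the diagonal. Your added remark about the distributional interpretation of the diagonal restriction is a reasonable elaboration, but the paper treats this as already justified by the smoothing established in Theorem~\ref{UphiFORM}.
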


\section{Spectral Theory of $D_Z$}
 
Since the group $e^{\rmi t D_Z}$ leaves the null space of $Q$ invariant it also acts on the quotient space $\ker \Box / \ker Q$. It is a classical result by Naimark (\cite{N66}) that the analog of Stone's theorem about the correspondence between strongly continuous unitary one parameter groups and self-adjoint operators also holds for Krein spaces.
Hence, the operator $D_Z$ is naturally a Krein-self-adjoint operator on the Pontryagin space $\ker \Box / \ker Q$. The spectral theory for Krein-self-adjoint operators in Pontryagin spaces
(or more generally for definitizable Krein-self-adjoint operators in Krein spaces) is based in the existence of a spectral function with possible critical points  (see e.g. \cite{L82,DR96}). The treatment of Krein-selfadjoint operators in Pontryagin spaces can be simplified by the observation that given any self-adjoint operator $A$ on a Pontryagin space $\mathcal{K}$ there exists a decomposition $\mathcal{K}_0 \oplus \mathcal{K}_1$ into invariant subspaces $\mathcal{K}_0$ and $K_1$ such that
\begin{itemize}
 \item $\mathcal{K}_0$ a Pontryagin space,
 \item $\mathcal{K}_1$ is a Hilbert space,
 \item the restriction of $A$ to $\mathcal{K}_0$ is bounded and the restriction of $A$ to $\mathcal{K}_1$ is self-adjoint.
\end{itemize}
This decomposition is achieved by applying the spectral projectors avoiding the possible critical points (see for example \cite{LNT06}, Section 2, Subsection 5 for the detailed argument)

\begin{lem} \label{ftracelemma}
 Let $A$ be a Krein-selfadjoint operator in a Pontryagin space $\mathcal{K}$ and let $U(t)=e^{-\rmi A t}$ be the corresponding unitary group. Suppose that
 for each $\varphi \in C^\infty_0(\R)$ the operator
 $$
  U_\varphi := \int_\R \varphi(t) \ee^{-\rmi A t} \dd t
 $$
 is compact. Then $A$ has discrete spectrum consisting of eigenvalues $\lambda_j$ of finite algebraic multiplicity $m_j$. If $U_\varphi$ is trace-class, then
 $$
  \Tr U_\varphi = \sum_j m_j \hat \varphi(\lambda_j),
 $$
 where $\hat \varphi$ is the Fourier transform of $\varphi$. \corri{Moreover, in the decomposition $\mathcal{K}= \mathcal{K}_0 \oplus \mathcal{K}_1$
 above the space $\mathcal{K}_0$ is finite dimensional.}
\end{lem}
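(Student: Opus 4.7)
My plan is to exploit the $A$-invariant decomposition $\mathcal{K} = \mathcal{K}_0 \oplus \mathcal{K}_1$ recalled just before the lemma. Since both summands are $A$-invariant they are invariant under $\ee^{-\rmi A t}$ and hence under $U_\varphi$, so the compactness (resp.\ trace-class property) of $U_\varphi$ passes to both pieces, and both the spectrum and the trace split as a direct sum. The proof thus reduces to two sub-cases: $A$ genuinely self-adjoint on the Hilbert space $\mathcal{K}_1$, and $A$ bounded Krein-self-adjoint on the Pontryagin space $\mathcal{K}_0$. I treat them separately and add the contributions.

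On $\mathcal{K}_1$, the Borel functional calculus identifies $U_\varphi$ with $\hat\varphi(A)$. For any $R > 0$ I pick $\varphi \in C^\infty_0(\R)$ whose Fourier transform is nonnegative and bounded below by a positive constant on $[-R,R]$; this is achieved by taking $\varphi = \psi * \psi^{\ast}$ with $\psi \in C^\infty_0(\R)$ supported in a sufficiently small neighbourhood of the origin and $\psi^{\ast}(t) := \overline{\psi(-t)}$, so that $\hat\varphi = |\hat\psi|^2$ varies slowly. Then $g(\lambda) := \chi_{[-R,R]}(\lambda)/\hat\varphi(\lambda)$ is a bounded Borel function and
$$
E_A([-R,R]) \;=\; g(A)\,\hat\varphi(A) \;=\; g(A)\,U_\varphi
$$
is a product of a bounded and a compact operator, hence a compact orthogonal projection, and therefore has finite-dimensional range. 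Letting $R \to \infty$ shows that $A|_{\mathcal{K}_1}$ has purely discrete spectrum with eigenvalues of finite multiplicity accumulating only at $\pm\infty$, and when $U_\varphi$ is trace-class the spectral decomposition $U_\varphi|_{\mathcal{K}_1} = \sum_j \hat\varphi(\lambda_j) P_j$ gives the trace $\sum_j m_j \hat\varphi(\lambda_j)$.

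On $\mathcal{K}_0$, the operator $A$ is bounded, and by Paley--Wiener $\hat\varphi$ extends to an entire function, so the Riesz holomorphic functional calculus applies. The identification $U_\varphi = \hat\varphi(A|_{\mathcal{K}_0})$ is verified by expanding $\ee^{-\rmi A t}$ in a norm-convergent power series and applying Fubini. The spectrum of a bounded Krein-self-adjoint operator on a Pontryagin space of negative index $\kappa$ is a compact subset of $\C$ whose non-real part consists of at most $\kappa$ conjugate pairs of eigenvalues of finite algebraic multiplicity (cf.\ \cite{L82,DR96}). Choosing $\varphi$ of the same convolution form, with $\hat\varphi$ additionally nonzero at these finitely many non-real spectral points (a generic perturbative condition on $\psi$), the Riesz calculus produces an invertible operator $\hat\varphi(A|_{\mathcal{K}_0})$ that is simultaneously compact. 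Hence $\mathcal{K}_0$ is forced to be finite-dimensional, $A|_{\mathcal{K}_0}$ has only finitely many eigenvalues $\lambda_j$ with algebraic multiplicities $m_j$, and the elementary finite-dimensional trace identity $\Tr\hat\varphi(A|_{\mathcal{K}_0}) = \sum_j m_j \hat\varphi(\lambda_j)$ (Jordan form, nilpotent blocks contributing zero) completes the argument once the two summand contributions are added.

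The main technical obstacle is the explicit construction of a single $\varphi \in C^\infty_0(\R)$ whose Paley--Wiener extension is nonvanishing on a prescribed compact subset of $\C$, namely the entire spectrum of $A|_{\mathcal{K}_0}$; this is a soft density argument combining the convolution trick $\psi * \psi^{\ast}$ on the real axis with a small perturbation of $\psi$ to avoid the finitely many non-real spectral points. Beyond that construction the proof is an orchestration of standard tools: Stone's theorem and the Borel calculus on $\mathcal{K}_1$, and the Riesz holomorphic calculus together with the Pontryagin-space structure theorem on $\mathcal{K}_0$.
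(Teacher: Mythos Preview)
Your proof is correct and follows the same overall strategy as the paper: reduce via the invariant splitting $\mathcal{K}=\mathcal{K}_0\oplus\mathcal{K}_1$, apply the Borel functional calculus on the Hilbert summand and the Riesz--Dunford calculus on the bounded Pontryagin summand, and leverage compactness of $U_\varphi=\hat\varphi(A)$ on each piece. The tactical details differ slightly. On $\mathcal{K}_1$ your argument that $E_A([-R,R])=g(A)U_\varphi$ is a compact projection of finite rank is a clean alternative to the paper's route via the spectral mapping theorem for the compact self-adjoint operator $\hat\varphi(A_1)$. On $\mathcal{K}_0$ the paper sidesteps your single-$\varphi$ construction entirely: it takes a $\delta$-family $\varphi_\epsilon(x)=\epsilon^{-1}\varphi(x/\epsilon)$, so that $\hat\varphi_\epsilon\to 1$ uniformly on compacta in $\C$ and hence $\hat\varphi_\epsilon(A_0)\to I$ in operator norm; the identity on $\mathcal{K}_0$ is then a norm limit of compact operators, forcing $\dim\mathcal{K}_0<\infty$. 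This avoids any need to locate the non-real spectrum or perturb $\psi$. Your construction also works (a $\psi$ with sufficiently small support makes $\hat\psi$, and therefore $\hat\varphi$, uniformly close to a nonzero constant on any prescribed compact subset of $\C$, not just of $\R$), but the paper's limiting argument is shorter and makes your ``main technical obstacle'' disappear.
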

\begin{proof}
 As explained above $A$ can be written as a direct sum $A_0 \oplus A_1$ where $A_0$ is bounded Krein-selfadjoint operator on a Pontryagin space $\mathcal{K}_0$ and $A_1$ is a selfadjoint
 operator on a Hilbert space $\mathcal{K}_1$. For $A_0$ we have the Riesz-Dunford functional calculus and for $A_1$ the functional calculus for self-adjoint operators on Hilbert spaces. The Fourier transform $\hat \varphi$ of $\varphi$ is entire, and so we indeed have $U_\varphi = \hat \varphi (A_0) \oplus \hat \varphi (A_1)$. If $\varphi \in C^\infty_0(\R)$ with $\int_\R \varphi(x) dx =1$, let $\varphi_\epsilon:= \frac{1}{\epsilon} \varphi(\epsilon^{-1} x)$ be the corresponding $\delta$-family. Then $\hat \varphi_\epsilon(z) = \hat \varphi (\epsilon z)$ and this converges uniformly to one on compact sets of the complex plane. Hence, the identity map on  
 $\mathcal{K}_0$ is approximated in the operator norm by the compact operators $\hat \varphi_\epsilon(A)$. 
 Hence, it is compact, and so $\mathcal{K}_0$ is finite dimensional. Therefore, the spectrum of $A_0$ is discrete and $A_0$ can be brought to Jordan-normal form.
By the spectral mapping theorem we get $\mathrm{spec}(U_\varphi) = \hat \varphi(\mathrm{spec}(A)) = \hat \varphi(\mathrm{spec}(A_0)) \cup \hat \varphi(\mathrm{spec}(A_1))$.
If $\hat \varphi$ is real-valued then $U_\varphi|_{\mathcal{K}_1}$ is self-adjoint and compact. Therefore, there exists an orthonormal basis consisting of eigenvectors with eigenvalues $\mu_j(\varphi)$ and the only possible accumulation point of the spectrum of 
 $U_\varphi|_{\mathcal{K}_1}$ is zero. To show that the spectrum of $A_1$ is discrete we choose a point $\lambda$. It is easy to see that there is an even real-valued function $\varphi \in C^\infty_0(\R)$ such that $\hat \varphi(\lambda)>0$. The spectral mapping theorem then implies that the spectrum of $A_1$ near $\lambda$ is purely discrete of finite multiplicity. The statement about the trace follows from the spectral  theorem for $A_1$ and the fact that $A_0$ can be brought to Jordan-normal form.
 \end{proof}

 \begin{theo} \label{ponteigs} Suppose that $(M,g)$ is a spatially compact globally hyperbolic stationary spacetime. Then
 the spectrum of $D_Z$ on $\ker \Box$ is discrete and consists of at most finitely many non-real eigenvalues and infinitely many real eigenvalues that accumulate at $-\infty$ and $+\infty$.
 Moreover, the spectrum is invariant under complex conjugation $\lambda \to \overline{\lambda}$ and reflection $\lambda \to -\lambda$. There exist pairwise disjoint subsets 
  $B_-, B_0, B_+$  in $\ker \Box$ with spans $V_- =\mathrm{span}\; B_-, V_0 = \mathrm{span} \; B_0, V_+ =  \overline{\mathrm{span} \;B_+}$ such that
  \begin{itemize}
   \item $B_- \cup B_0 \cup B_+$ is linearly independent,
   \item $V_- ,V_0$ and $V_+$ are closed invariant subspaces for $e^{\rmi t D_Z}$,
   \item $V_- + V_0 + V_+ = \ker \Box$,
   \item $B_- \cup B_0 \cup B_+ \subset C^\infty(M)$,
   \item $\#B_-<\infty$, $\#B_0<\infty$, i.e. $\dim V_- < \infty$, $\dim V_0 < \infty$,
   \item the energy sesquilinear form $Q(\cdot,\cdot)$ is positive definite on $V_+$ and $B_+$ is an orthonormal subset. The null space $\ker Q$ is contained in $V_0$,
   \item $B_+$ consists of eigenvectors of $D_Z$ with real eigenvalues and $\overline B_+ = B_+$,
   \item $B_0$ consists of generalized eigenvectors of $D_Z$ with zero eigenvalues,
   \item $B_-$ consists of generalized eigenvectors of $D_Z$, i.e. if $u \in B_-$ then $(D_Z - \lambda)^m u =0$ for some $m \in \mathbb{N}$ and some $\lambda \in \C$.
  \end{itemize}
\end{theo}
\begin{proof}
 The spectrum is symmetric with respect to the reflection $\lambda \to -\overline{\lambda}$ because complex conjugation on $\ker \Box$ anti-commutes with $D_Z$. The symmetry with respect to complex conjugation
 follows from the fact that $D_Z$ is Krein-selfadjoint. Now recall that $\int \varphi(t) \ee^{\rmi t D_Z} \dd t$ is trace-class for any $\varphi \in C^\infty_0(\R)$. We can apply 
 Lemma \ref{ftracelemma} above to the Krein-self-adjoint operator $D_Z$ on the quotient space
  $\ker \Box / \ker Q$ to conclude that the spectrum of $D_Z$ is purely discrete. We decompose the Pontryagin space $\ker \Box / \ker Q$ as $\ker \Box / \ker Q = W_- \oplus W_+$, where $W_-$ is a finite dimensional invariant Pontryagin space and $W_+$ is an invariant Hilbert space. The operator $D_Z |_{W_-}$ admits a Jordan normal form and is therefore the span of generalised eigenvectors. The operator $D_Z |_{W_+}$ is self-adjoint with discrete spectrum and hence admits a complete diagonalisation.
 Given a generalised eigenvector $w \in \ker \Box / \ker Q$ of non-zero eigenvalue $\lambda$ with $(D_Z - \lambda)^N w=0$ and  $(D_Z - \lambda)^{N-1} w\not=0$ this means in $\ker \Box$
 that $(D_Z - \lambda)^N w = \tilde w$, where $\tilde w \in \ker Q$. By Lemma \ref{luckyzero} we have $D_Z \tilde w=0$ and hence $v = w - (-\lambda)^{-N} \tilde w$ satisfies
 $(D_Z - \lambda)^N v = 0$. Hence, any generalized eigenvector with non-zero eigenvalue in $\ker \Box / \ker Q$ has a representative that is an eigenvector with the same eigenvalue 
 in $\ker \Box$. This gives the set $B_-$ when applied to the generalised eigenvectors in $W_-$ and the set $B_+$ when applied to an orthonormal basis of eigenvectors in $W_+$. 
 By Lemma  \ref{luckyzero}, eigenvectors in $\ker \Box / \ker Q$ of zero eigenvalue simply correspond to generalised eigenvectors in $\ker \Box$.
 We choose a basis of generalised eigenvectors with zero eigenvalue in $\ker \Box / \ker Q$, lift it to a linearly independent set in  $\ker \Box$ 
 and extend it to a basis in $\ker \Box$. This way we obtain the set $B_0$ consisting of generalised eigenvectors with zero eigenvalue. 
 Since $D_Z$ anticommutes with complex conjugation the complex conjugate of the generalised eigenspaces with eigenvalue $\lambda$ is a generalised eigenspace with eigenvalue $-\lambda$. Since $B_0$ and $B_-$ are finite there are only finitely many possible eigenspaces in $V_+$ whose complex conjugate is not in $V_+$.
We can rearrange the sets $B_-$ and $B_+$ moving these finitely many eigenvectors to $B_-$, so $B_+$ can be chosen invariant under complex conjugation.
 The remaining claimed properties of this basis hold by construction.
  \end{proof}

 This theorem essentially says that $D_Z$ can be brought to Jordan normal form with only finitely many non-trivial Jordan blocks and all but finitely many eigenspaces are positive definite subspaces. We refer to this decomposition as the spectral decomposition of $D_Z$. It implies that the symplectic vector space $\ker \Box$ has a dense symplectic subspace that is a direct sum of finite dimensional symplectic vector spaces of the form $W_\lambda + W_{-\lambda}$, where
$W$ is the generalised eigenspace of $D_Z$ with eigenvalue $\lambda$. $W_0$ must therefore be even dimensional and symplectic.

This spectral theorem can now be used to more directly relate the  energy inner product $Q(u,v)$ of Section \ref{ESECT}  and symplectic 
inner product $\sigma(u,v) $ \eqref{SYMPLECTIC} on real eigenpaces of $D_Z$. 
It is convenient to introduce a subspace $W \subset \ker \Box$ as the closure of the span of all the real eigenspaces without non-trivial Jordan blocks, thus $W$ is an invariant subspace for
$D_Z$ on which $D_Z$ is completely diagonalisable.
In particular $W$ contains $V_+$ and has finite codimension in $\ker \Box$.
The form $Q$ may however not be positive definite on $V$. An important thing to note is that if $D_Z$ has only real eigenvalues and no non-trivial Jordan blocks in its spectral decomposition
then $W = \ker \Box$. 

\begin{theo}\label{JTHEO} Under the assumptions of Theorem \ref{ponteigs} let $W$ be as above. Then
there exists a complex structure $J: W \to W$ that commutes with $D_Z$ so that $\sigma(\overline v, J w)$ is a positive definite inner product on $W$. For any such complex structure 
the associated splitting $W = W^+ \oplus W^-$ of $W$ into $\pm \rmi$ eigenspaces of $J$ has the property that
$\pm D_Z$ is semi-bounded below on $W^\pm$.
\end{theo}
\begin{proof}
 If $W_\lambda$ is the eigenspace for eigenvalue $\lambda$
 then the symplectic form must be non-degenerate on $W_\lambda + W_{-\lambda}$. This follows from the non-degeneracy of $\sigma$ on $\ker \Box$ and the fact that if $v$ is in a generalised eigenspace with eigenvalue $\lambda$ then $\sigma(v,w)=0$ unless $w$ is in the generalised eigenspace with eigenvalue $-\lambda$.
  Since $\overline W_{\lambda} = W_{-\lambda}$ and by Lemma \ref{Lemma12} this implies that $Q$ is non-degenerate on $W_\lambda$ if $\lambda \not=0$. 
  It can therefore be decomposed into sign definite subspaces.
 This means we have a decomposition of $\ker \Box$ into subspaces $(W_k)_{k \in \mathbb{Z}}$ so that
 \begin{enumerate}
  \item $\oplus_{k \in \mathbb{Z}} W_k$ is dense in $\ker \Box$,
  \item each $W_k$ is spanned by eigenvectors with eigenvalues $\lambda_k$, where $\lambda_0=0$ and $\lambda_k \not=0$ if $k\not=0$,\\
  \item $\overline W_k = W_{-k}$,\\
  \item if $k \not=0$ then $Q$ is sign definite of sign $s_k \in \{-1+1\}$ on $W_k$.
   \end{enumerate}
 There are only finitely many $k$ such that $s_k = -1$. If $v$ is in $W_k$ then $\overline{v}$ is an eigenvector with eigenvalue $-\lambda_k$.
 Hence,
 $$W_0 \oplus \bigoplus _{s_k \lambda_k > 0} \left(W_k \oplus \overline{W_k} \right)$$ is dense. For a fixed $k>0$ with $s_k \lambda_k>0$ choose a basis $\{e_1,\ldots, e_m\}$ in $W_k$
 such that $\sigma(\overline{e_p},e_q) =- \rmi  \delta_{pq}$. This is possible since $\frac{\rmi}{2}\sigma(\overline{e_q}, D_Z e_q) = \lambda_k Q(e_q,e_q) >0$.
 Then $v_p=(e_p + \overline{e_p})$ and $w_p=-\rmi \;(e_p - \overline{e_p})$ are real-valued. We can define a linear map $J_k$ on
 $W_k \oplus \overline{W_k}$ by $J_k v_p = -w_p$ and $J_k w_p = v_p$. This means that $J_k e_p = \rmi \; e_p$, and $J_k \overline{e_p} = -\rmi \; \overline{e_p}$.
 Next choose any complex structure $J_0$ on the real part of $W_0$ such that $\sigma(J_0 u,J_0 v)=\sigma(u,v)$, $\sigma(u,J_0u) \geq 0$ for $u,v \in W_0$. Since this space is a finite dimensional symplectic space this is always possible.
 The map $J = \oplus_{k=0}^\infty J_k$ extends to the whole space $\ker_\R \Box$ by linearity and continuity. It also extends complex linearly to $\ker \Box$.
 We claim that
 \begin{gather*}
   \sigma(J u, J v) = \sigma(u,v),\\
   \sigma( \overline{u},J u ) \geq 0.
 \end{gather*}
 It suffices to check this in every space $W_k \oplus \overline{W_k}$ for $k\not=0$. The first equality follows from
 $\sigma(J \overline{e_p}, J e_p) = \rmi \; (-\rmi) \sigma(\overline{e_p}, e_p) = \sigma(\overline{e_p}, e_p) $. The second from
 $$
  \sigma(\overline{e_p}, J e_p) = \rmi \;  \sigma(\overline{e_p}, e_p) = 1
 $$
 and the corresponding complex conjugate $\sigma(e_p, J \overline{e_p}) = 1$. Since $\sigma$ is non-degenerate, so is the associated inner product. Hence
 $\sigma( \overline{u},J u ) >0$ if $u \not=0$.
 It remains to show that $\pm D_Z$ is semibounded below if $J$ is an invariant complex structure with these properties. 
Since $J$ commutes with the generator $D_Z$ the above decomposition into eigenspaces $W_k$ can be achieved in such a way that
 each $W_k$ is also an eigenspace of $J$ with eigenvalues $f_k \; \rmi$, where $f_k \in \{-1,+1\}$. Apart from finitely many $k$ we have $s_k=1$. In this case
 the $\sigma(e_p, J e_q) \geq 0$ leads to $f_k=1$ if $\lambda_k>0$ and $f_k=-1$ if $\lambda_k<0$. Therefore,
 $$
  \langle \overline{e_p}, D_Z e_p \rangle =\sigma(\overline{e_p},D_Z J_k e_p)= f_k \lambda_k  \geq 0.
 $$
 Therefore the spectrum of $\pm D_Z$ on $W^\pm$ is semi-bounded below. 
  \end{proof}
  
In the special case when $Q$ is positive definite on $\ker \Box$ we have $W = V_+ = \ker \Box$ and the above construction simplifies significantly. In this case 
$W^\pm$ are obtained by so-called frequency splitting. Namely, they are constructed as the positive and negative spectral subspaces of $D_Z$.
In the general case when Q is indefinite our proof shows that frequency splitting needs to be reversed on the negative definite subspaces to make sure the inner product is positive definite.
In that case $D_Z|_{W^+}$ may have finitely many negative eigenvalues.

Since $\Box$ and $Z$ are real the operator $D_Z$ has symmetric spectrum and the distribution trace is real-valued.
Classical trace formulae often deal with traces of the form $\Tr \ee^{\rmi t \Delta^{1/2}}$ where the generator of the group has positive spectrum.
The analog of this in our case would be the trace of the restriction $\Tr (\ee^{\rmi D_Z t}|_{W^+})$. The splitting of $W$ into $W^+$ and $W^-$
induces a splitting of the kernel of the operator 
$\ee^{\rmi D_Z t}$ into parts that are holomorphic in $t$ in the upper half plane and lower half plane respectively. This yields propagators with restricted wavefront sets.
Such splittings can always be achieved microlocally and are related to Hadamard states as explained in Section \ref{Hadamardsec}.

    \subsection{Formulation of the eigenvalue problem as an operator pencil}

As before assume that $(M,g)$ is a product $\R \times \Sigma$
with metric
$$
  g=-(N^2-|\eta|_h^2) \dd t^2 + \eta \otimes \dd t + \dd t \otimes \eta + h,
 $$
and inverse metric 
$$
 g^{-1} =  N^{-2}\left( \begin{matrix} -1 & \beta \\ \beta^T &  N^2 h^{-1} -\beta \otimes \beta \end{matrix} \right).
$$
Let $\tilde h$ be the metric obtained by inverting $N^2 h^{-1} - \beta \otimes \beta$. Then one has
$$
 |\tilde h|^{\frac{1}{2}} = N^{1-n} N |h|^{\frac{1}{2}}  (N^2-|\beta|_h^2)^{-\frac{1}{2}} =N^{1-n}  |g|^\frac{1}{2} (N^2-|\beta|_h^2)^{-\frac{1}{2}}.
$$
A longer computation shows that we have the decomposition
$$
 (N^2-|\beta|_h^2)^{\frac{1}{4}} N^\frac{n+1}{2} (\Box_g+V) N^{\frac{3-n}{2}} (N^2-|\beta|_h^2)^{-\frac{1}{4}} =\partial_t^2  - 2 X \partial_t - \Delta_{\tilde h} + W.
$$
Here $\Delta_{\tilde h}$ is the Laplace-type operator on the Riemannian manifold $(\Sigma,\tilde h)$, 
$W$ is the multiplication operator by the smooth potential
$$
 W = N^{-\frac{n-3}{2}}(N^2-|\beta|_h^2)^{-\frac{1}{4}} \left(\Delta_{\tilde h} ( N^{\frac{n-3}{2}} (N^2-|\beta|_h^2)^{\frac{1}{4}})\right) + N^2 V,
$$
and the skew-adjoint first order differential operator $X$ is given by
$$
  X= \frac{1}{2} \left( \mathcal{L}_\beta - \mathcal{L}_\beta^* \right).
$$

Denoting $P=- \Delta_{\tilde h} + W$ separation of variables then shows that $N^{\frac{n-3}{2}} (N^2-|\beta|_h^2)^{-\frac{1}{4}} \psi(x) \ee^{-i \lambda t}$  is an eigenfunction of $D_Z$ with eigenvalue $\lambda$
and is in $\mathrm{ker} \Box$ iff
\begin{equation} \label{pencil}
  ( P - 2 \I \lambda X - \lambda^2) \psi =0.
 \end{equation}
This is not of the form $A - \lambda^2$ and can therefore not be interpreted directly as an eigenvalue problem for an operator on $\Sigma$.
Families of operators depending as above on a parameter $\lambda$ are sometimes referred to as operator pencils. The spectrum of the pencil
is defined as the complement of the set of points $\lambda$ such that $( P - 2 \I \lambda X - \lambda^2)$ has a bounded inverse.
The eigenvalues are the values of $\lambda$ where $\mathrm{ker}(P - 2 \I \lambda X - \lambda^2)$ is non-trivial. In our case a simple application of the meromorphic Fredholm theorem
shows that $( P - 2 \I \lambda X - \lambda^2)$ is a meromorphic function with all negative Laurent coefficients of finite rank. This is another way of showing that
the eigenvalues form a discrete set.
Thus, the eigenvalues of the Killing vector field on $\ker \Box$ can be interpreted as the eigenvalues of a quadratic operator pencil.

It is well known \cite{G74, BN94, CN95, ER08} that the eigenvalue problem for a self-adjoint quadratic operator pencil of the above form \corri{can be transformed} into the self-adjoint generalised eigenvalue problem
$$
 A \psi = \mu B \psi, \quad \mu = \frac{1}{\lambda} - \lambda,
$$
where 
$$
 A = \left( \begin{matrix} 2 \rmi X & P-1 \\ P-1 & 2 \rmi X\end{matrix} \right), \quad B =\left( \begin{matrix} P & 0 \\ 0  & 1 \end{matrix} \right)
$$
are unbounded self-adjoint operators on $L^2(\Sigma) \oplus L^2(\Sigma)$. \corri{If $\lambda$ is a non-zero eigenvalue of the pencil then $\frac{1}{\lambda} - \lambda$ is a generalised eigenvalue with smooth generalised eigenvector.}
If $P$ is strictly positive then all the eigenvalues of the pencil are therefore real.
Note that there is no difficulty here with domains as such generalised eigenfunctions can be shown
to be smooth, using elliptic regularity. One can therefore consider this as a linear algebra eigenvalue problem on the space of smooth functions.

\section{Hadamard states and associated inner products}\label{Hadamardsec}

{In this section, we give another approach to inner products on $\ker \Box$
in terms of {\it Hadamard states} of quantum field theory. This discussion does not require a timelike Killing vector field and can be formulated on a general globally hyperbolic spacetime.
Hadamard states were first introduced in quantum field theory in curved spacetimes as a substitute for vacuum and positive energy states in the absence of a good definition of energy. We define such
states in classical PDE terms and only explain their relation to QFT at the
end of this section.}

\subsection{Inner products induced by Hadamard states}

Hadamard states come up very naturally by analysing the Fourier integral operator properties of $E$ on a globally hyperbolic spacetime $(M,g)$.
Note that the homogeneous canonical relation $C$ \eqref{CDEF} has two natural (closed and open) components $C_+$ and $C_-$
given by
\begin{equation} \label{Cpm}
 C_\pm = \{(x,\xi,x',\xi')  \in (T_{0,\pm}^* M)^2 \mid (x,\xi) = G^s (x',\xi') \textrm{ for some } s \in \R\}.
\end{equation}
These components are connected if $n >2$.
We can therefore write
$$
 E = S_+ - S_-,
$$
where
$$
 S_\pm \in I^{-\frac{3}{2}}(M \times M, C_\pm').
$$
Since this decomposition is non-unique and involves smooth cut-offs the integral kernels of $S_\pm$ are bi-solutions only modulo smoothing operators, i.e.
$$
 \Box S_\pm = S_\pm \Box = 0 \textrm{ mod } C^\infty.
$$
It was shown in \cite[Theorem 6.5.3]{DH} that this decomposition can always be chosen such that $\rmi S_-$ is non-negative as a bilinear form.
In fact, a stronger statement holds.

\begin{defin}
 A non-negative bi-distribution $\omega$ is called a {\it Hadamard state}  if $S_-:=-\rmi \omega$ and $S_+ = S_-^T$ define a microlocal splitting
 $E=S_+-S_-$ as described above and if in addition
 \begin{gather*}
 \Box S_\pm = S_\pm \Box = 0,\\
 S_+ = -\overline{S_-}.\\
\end{gather*} 
Here $S_\pm^T$ is the formally transpose map of $S_\pm$, i.e. we have $S_\pm(x,y)^T = S_\pm(y,x)$ for the integral kernels. 
\end{defin}
Hadamard states can be understood as arising from microlocal splittings that give rise to non-negative bisolutions.  Existence of Hadamard states was left open in \cite{DH}, and
it is a non-trivial fact that such states exist on any globally hyperbolic spacetime. The classical existence proof (\cite{FNW}, FSW78) is
based on a deformation argument  and propagation of singularities. Achieving a microlocal splitting in the analytic category, based on analytic wavefront sets, is much harder and a construction of analytic Hadamard states was given only recently \cite{GW17}. We refer to \cite{Rad96} for older definitions of Hadamard states and the relation to the microlocal splitting.

As explained in \cite[Section 6.6]{DH} the microlocal splitting  $E = S_+ - S_-$ is closely related to so-called distinguished parametrices. Let $\Delta_N$ be the conormal bundle of the diagonal in $M \times M$.
Recall that an orientation $(C_\nu^+,C_\nu^-)$ of $C \setminus \Delta_N$ is a decomposition $C \setminus \Delta_N = C_\nu^+ \cup C_\nu^-$ into two disjoint closed and open subsets $C_\nu^\pm$
such that the canonical relation defined by $C_\nu^+$ is the inverse canonical relation of $C_\nu^-$. For the operator $\Box$ on a connected globally hyperbolic spacetime 
of dimension greater equal three there are four orientations defined by
$C_\nu^+=C_\mathrm{F},C_{\mathrm{aF}},C_\mathrm{ret},C_\mathrm{adv}$, where
\begin{gather*}
 C_{\mathrm{F}} = \{ (x,\xi,x',\xi') \in C \mid (x,\xi) = G^s(x',\xi') \textrm{ for some } s \in \R_- \},\\
 C_{\mathrm{aF}} = \{ (x,\xi,x',\xi') \in C \mid (x,\xi) = G^s(x',\xi') \textrm{ for some } s \in \R_+ \},\\
 C_\mathrm{ret/adv} = \{ (x,\xi,x',\xi') \in C \mid x \in J_\pm(x') \}.
\end{gather*}
If the dimension is two there are more orientations, but even in this case we will use only the above ones.
For each orientation there exists a unique (modulo smoothing operators) parametrix $E_\nu$ such that $\mathrm{WF}'(E_\nu) \subset C_\nu^+ \cup \Delta_N$. In that case one show that
$\mathrm{WF}'(E_\nu) = C_\nu^+ \cup \Delta_N$.
Note that $E_{\mathrm{ret/adv}}$, the advanced and retarded fundamental solutions are distinguished parametrices with
$$
 \mathrm{WF}'(E_{\mathrm{ret/adv}})  \subset C_\mathrm{ret/adv} \cup \Delta_N.
$$
The distinguished parametrices $E_\mathrm{F}$ and $E_{\mathrm{aF}}$ corresponding to $C_\mathrm{F}$ and $C_{\mathrm{aF}}$ respectively are called the Feynman and anti-Feynman parametrices. 
If a Hadamard state $\omega = \rmi \, S_-$ is given, then one can define Feynman- and anti-Feynman
propagators $E_\mathrm{F}$ and $E_{\mathrm{aF}}$ that are are not only parametrices but actual fundamental solutions. They are given by
\begin{gather}
 E_{\mathrm{F}} = E_{\mathrm{adv}} - S_- = E_{\mathrm{adv}} + \rmi \;\omega  = E_{\mathrm{ret}}-S_+,\\
 E_{\mathrm{aF}} = \overline{E_\mathrm{F}} =  E_{\mathrm{adv}} + S_+ = E_{\mathrm{ret}} + S_-. 
\end{gather}
Some of these relations are given modulo smoothing operators in \cite[Section 6.6]{DH} with the notation $E_{\tilde N} = E_\mathrm{F}, E_\emptyset = E_{\mathrm{aF}}$, 
$S_{\pm} = S_{T^*_{0,\mp} M \setminus 0}$ , but they hold exactly if one constructs the objects from a Hadamard state.

\begin{defin} 
Given a Hadamard state $\omega=\rmi S_-$ one can define a hermitian form $(\cdot,\cdot)_\omega$ on the space $C^\infty_0(M)$ by
$$
 (f,g)_\omega := \rmi \left( S_-(\overline f \otimes g) + S_+(\overline f \otimes g) \right).
$$\end{defin}
Since both $S_-$ and $S_+$ are distributional bi-solutions this hermitian form descends to the quotient $C^\infty_0(M) / (\Box C^\infty_0(M))$.
By the exact sequence \eqref{DIAGRAMexact} the map $E$ defines an isomorphism from $C^\infty_0(M) / (\Box C^\infty_0(M))$ to the space of smooth solutions with compact space-like support.
We will denote this space by $\ker_{C^\infty_{sc}} \Box$. 
\begin{lem}
 For any Hadamard state $\omega$ the hermitian form $\langle \cdot,\cdot \rangle_\omega$ is a non-degenerate inner product on $\ker_{C^\infty_{sc}} \Box$.
\end{lem}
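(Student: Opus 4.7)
The strategy has three parts: well-definedness on the quotient, hermiticity, and non-degeneracy (which I plan to obtain via positive-definiteness).

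First, I would observe that the strengthened Hadamard axioms $\Box S_\pm = S_\pm \Box = 0$ (exactly, not merely modulo smoothing) imply that $(\cdot,\cdot)_\omega$ vanishes whenever either argument lies in $\Box C^\infty_0(M)$, so the form descends to $C^\infty_0(M)/\Box C^\infty_0(M)$ which, via $E$ and the exact sequence \eqref{DIAGRAMexact}, is identified with $\ker_{C^\infty_{sc}}\Box$. For hermiticity, I would combine the two defining identities $S_+ = -\overline{S_-}$ and $S_+ = S_-^T$; together they yield the standard two-point-function kernel symmetry $\overline{\omega(x,y)} = \omega(y,x)$. Conjugating $(g,f)_\omega$ and relabeling integration variables then produces $(f,g)_\omega$, giving $(f,g)_\omega = \overline{(g,f)_\omega}$.

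Next, for positivity I would use the transpose identity to rewrite
\[
(f,f)_\omega \;=\; \omega(\bar f \otimes f) + \omega(f \otimes \bar f),
\]
since $\rmi S_- = \omega$ and $\rmi S_+(\bar f \otimes f) = \rmi S_-(f \otimes \bar f) = \omega(f\otimes\bar f)$. Both summands are non-negative: the first by the defining positivity $\omega(\bar h \otimes h)\ge 0$ applied to $h=f$, the second by applying the same inequality to $h=\bar f$.

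For non-degeneracy, assume $(f,f)_\omega = 0$; then both non-negative summands must vanish. Each of the sesquilinear forms $B_1(f,g) := \omega(\bar f \otimes g)$ and $B_2(f,g) := \omega(g \otimes \bar f)$ is hermitian and positive semi-definite on $C^\infty_0(M)$ (hermiticity using $\overline{\omega(x,y)} = \omega(y,x)$), so Cauchy--Schwarz forces $B_1(f,\cdot) \equiv 0$ and $B_2(f,\cdot) \equiv 0$. Combining the defining relations gives
\[
E(\bar f \otimes g) \;=\; S_+(\bar f \otimes g) - S_-(\bar f \otimes g) \;=\; \rmi\bigl[\omega(\bar f \otimes g) - \omega(g \otimes \bar f)\bigr] \;=\; 0
\]
for all $g \in C^\infty_0(M)$. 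Therefore $E\bar f = 0$, and by exactness of \eqref{DIAGRAMexact} we have $\bar f \in \Box C^\infty_0(M)$, so $f$ represents the zero class in the quotient. This establishes positive definiteness (hence non-degeneracy) of $(\cdot,\cdot)_\omega$ on $\ker_{C^\infty_{sc}}\Box$.

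The main obstacle I anticipate is purely bookkeeping: carefully extracting the kernel symmetry $\overline{\omega(x,y)} = \omega(y,x)$ from the combined Hadamard relations, and verifying that \emph{both} $B_1$ and $B_2$ (not just the more familiar $B_1$) are hermitian and positive semi-definite so that Cauchy--Schwarz applies in each slot. Once these conjugation-tracking issues are settled, the non-degeneracy reduces cleanly to the exactness of the fundamental sequence for $E$.
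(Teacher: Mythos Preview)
Your proposal is correct and follows essentially the same route as the paper: both arguments split $(f,f)_\omega$ into the two non-negative pieces $\rmi S_\pm(\bar f\otimes f)$, apply Cauchy--Schwarz to each positive semi-definite sesquilinear form to kill both slots, and then invoke the exact sequence \eqref{DIAGRAMexact} to conclude that $f$ (or equivalently $\bar f$) lies in $\Box C^\infty_0(M)$. Your write-up is slightly more explicit about hermiticity and the kernel symmetry $\overline{\omega(x,y)}=\omega(y,x)$, but the substance is identical.
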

\begin{proof}
 We only need to show that $\langle f,f \rangle_\omega=0$ implies that $f=0$. This means we need to show that $(f,f)_\omega=0, f \in C^\infty_0(M)$ implies that $f \in \Box C^\infty_0(M)$.
 By assumption we have $ \rmi S_-(\overline f \otimes g)$ and  $\rmi S_+(\overline f \otimes g)$ are positive hermitian forms. Hence, $(f,f)_\omega$ implies that
 $\rmi S_\pm(\overline f \otimes f)=0$ and by the Cauchy-Schwarz inequality $\rmi S_\pm(\overline g \otimes f)=0$ for all $g \in C^\infty_0(M)$. This in turn implies that
 $E(\overline g \otimes f)=0$ for all $g \in C^\infty_0(M)$. This means that $f$ is in the kernel of $E$, and by the exact sequence \eqref{DIAGRAMexact}, $f \in \Box C^\infty_0(M)$.
\end{proof}

By the mapping properties of Fourier integral operators the inner product defined by a Hadamard state extends to $\ker \Box$. However, the topology induced by the inner product above is weaker than that of $\ker \Box$. In the case of a spatially compact globally hyperbolic spacetime 
one can show that it is independent of the Hadamard state and equals the topology induced by 
$\mathrm{FE}^{\frac{1}{2}}(M_T,\Box) \cap \ker \Box$ for any $T>0$. Indeed, if $(f,g) \in C^\infty(\Sigma)$ are the Cauchy data of a solution $u$, then, using the representation \eqref{SOLCP} one obtains
$$
\langle u, u \rangle_\omega = \sum_{\pm} \rmi \langle \delta'_{\Sigma,f} + \delta_{\Sigma,g}, S_\pm (\delta'_{\Sigma,f} + \delta_{\Sigma,g}) \rangle = \langle f, A f \rangle + \langle g, B g \rangle + 2 \langle f, C g \rangle,
$$
where $A,B$ are elliptic pseudodifferential operators on $\Sigma$ with positive principal symbols and orders $1$ and $-1$ respectively, and $C$ is a pseudodifferential operator of order $-1$. 
This is easily inferred from the principal symbols of $S_\pm$ as given in Theorem \ref{VOLC} and its proof. 

%Indeed, this can be seen directly from the construction of Hadamard states by the deformation argument of
%Fulling-Narkowich-Wald (\cite{FNW}, FSW78). First one notes that since Hadamard states differ only by smooth kernels they automatically induce the same topology
%on $\ker \Box$. Then, one deforms the spacetime to one that is spacetime is ultrastatic in the past. On the product spacetime one can write down an explicit Hadamard state based on the spectral decomposition of the Laplacian on the Cauchy surface (see \eqref{productform}) and propagate it to the future to induce a Hadamard state on our spacetime.
% For this Hadamard state the induced inner product on the Cauchy data space is that of
%$H^{\frac{1}{2}}(\Sigma) \oplus H^{-\frac{1}{2}}(\Sigma)$ in the past. Since Cauchy evolution preserves these spaces this 
%holds or any Cauchy surface. Hence, this (and therefore any other) Hadamard state induces the topology of
%$\mathrm{FE}^{\frac{1}{2}}(M_T,\Box) \cap \ker \Box$.

\subsection{Pure Hadamard states and complex structures}

Another way defining an inner product on the symplectic vector space $\ker_\R \Box$ is using a complex structure compatible with $\sigma$,
 i.e. a real linear map with $J^2=-\mathrm{id}$ that satisfies the additional conditions
\begin{gather}
 \sigma(J u, J v) = \sigma(u,v),\\
 \sigma(u, J u) \geq 0. 
\end{gather}
The Cauchy-Schwarz inequality together with non-degeneracy of $\sigma$ implies that $\sigma(u, J u) > 0$ if $u \not=0$.
One can endow $\ker_\R \Box$ with the structure of a complex vector space by defining $\rmi u := J u$, and 
\begin{gather}
 \langle u,v \rangle_J := \sigma(u,J v) + \rmi \sigma(u,v) 
\end{gather}
defines a positive definite inner product on $\ker_\R \Box$.
A quasifree state is then defined by the distribution $\omega_J \in \mathcal{D'(M \times M)}$  given by
\begin{gather}
 \omega_J(f_1,f_2) = \langle E(f_1),E(f_2) \rangle_J.
\end{gather}
It satisfies the following equations
\begin{gather}
 \omega_J(\Box f_1 \otimes f_2) =  \omega_J(f_1 \otimes \Box f_2) =0,\\
 \omega_J(f_1 \otimes f_2) - \omega_J(f_2 \otimes f_1) = -\rmi E(f_1,f_2),\\
 \omega_J(\overline f_1 \otimes f_1) \geq 0
\end{gather}
for any complex valued test functions $f_1,f_2 \in C^\infty_0(M)$.
The quasifree states obtained in this way are precisely the pure quasifree states on the CCR algebra (see e.g. Th 17.12 and Th 17.13, p. 427 in \cite{DG13}).
States obtained in this way are often called Fock states because their GNS representation is unitarily equivalent to a Fock representation.
We call a bi-distribution $\omega$ constructed in this way a Fock state associated with the complex structure $J$.
If in addition $\mathrm{WF}(\omega_J) \subset C_-'$ then $S_-=-\rmi \; \omega_J$ defines a splitting $E = S_+-S_-$ with $S_+=S_-^T$ and therefore $\omega_J$ is a Hadamard state.

\subsection{Relation of Hadamard state inner products and energy form
inner products} \label{Hadamard}

The following relates Theorem \ref{JTHEO} to Hadamard states.
\begin{theo}
 Suppose that $(M,g)$ is a spatially compact stationary spacetime, and let $\Box = \Box_g + V$, where $V \in C^\infty(M)$ commutes with the Killing vectorfield. Then 
 an invariant Hadamard state exists for $\Box$ if and only if the spectrum of $D_Z$ is real and there are no non-trivial Jordan blocks in the spectral decomposition of $D_Z$ on $\ker \Box$.
 In this case there exists an invariant Fock state. Such an invariant Fock state is automatically a Hadamard state.
\end{theo}
\begin{proof}
 If there exists an invariant Hadamard state this implies that the inner product $(\cdot, \cdot)_\omega$ is positive definite and invariant. Since all generalised eigenfunctions of $D_Z$
 on $\ker \Box$ are smooth they are also contained in this inner product space. Hence, all eigenvalues are real and there are no non-trivial Jordan blocks in the generalized eigenspaces.
Conversely, suppose all eigenvalues are real and there are no non-trivial Jordan blocks in the spectral decomposition of $D_Z$. Then, in the notation of Theorem \ref{JTHEO}, we have
$W = \ker \Box$. The complex structure $J$ constructed in Theorem \ref{JTHEO} gives rise to a Fock state such that the spectrum of $D_Z$ on the one particle Hilbert space $\mathcal{H}$ is semi-bounded below. This is sufficient to conclude that $\omega_J$ is a Hadamard state (Theorem 6.3 in \cite{SVW02}).
  \end{proof}

\begin{rem}
 Our definition of CCR algebra and Hadamard states is consistent with positivity of the generator of the time translation group. This means that for a quantum mechanical wave packet $\psi \in L^2(\R^{n-1})$ the time-dependent Schr\"odinger evolution is $\psi_t = \ee^{\rmi \, t H} \psi$, where $H$ is the Hamiltonian operator (the energy). With this convention positive energy solutions of the Schr\"odinger equation have their space-time Fourier transform supported in the half-space $[0,\infty) \times \R^{n-1}$. In physics it is more standard to use another sign convention. One can pass to the physics convention by taking the complex conjugate of the state and changing the commutator relation in the CCR algebra to $[\Phi(f_1),\Phi(f_2)] = \rmi E(f_1 \otimes f_2)$.
\end{rem}
\subsection{\label{QFT} Hadamard states in quantum field theory}
In this section we pause to recall the origin of Hadamard states in quantum 
field theory.
They are quasifree states on the CCR algebra of the generalised Klein-Gordon field defined by the operator $\Box$. The CCR algebra of $\Box$ can be defined as the abstract unital $*$-algebra defined by symbols $\Phi(f)$ indexed by $f \in C^\infty(M)$ and relations
\begin{gather}
 f \to \Phi(f) \textrm{ is complex linear},\\
 \Phi(f)^* = \Phi(\overline f),\\
 \Phi(\Box f) =0,\\
 [\Phi(f_1),\Phi(f_2)] = -\rmi E(f_1 \otimes f_2).
\end{gather}
A quasifree state is a state $\omega_H$  on this algebra that is completely determined by the functional 
$\omega: f_1 \otimes f_2 \mapsto \omega_H(\Phi(f_1) \Phi(f_2))$ such that the higher
components $\omega_H(\Phi(f_1) \cdot \Phi(f_n))$ can be expressed in a certain combinatorial way in terms of $\omega$. For a functional $\omega$
to define a quasifree state it needs to satisfy the conditions
\begin{gather}
 \omega(\overline f \otimes f ) \geq 0,\\
 \overline{\omega(f_1 \otimes f_2)}=\omega(\overline f_2 \otimes \overline f_1),\\
 \omega(Pf_1 \otimes f_2) = \omega(f_1 \otimes P f_2 )=0,\\
 \omega(f_1 \otimes f_2) - \omega(f_2 \otimes f_1) = -\rmi E(f_1 \otimes f_2). 
\end{gather}
If in addition $\omega$ is a bidistribution and $\mathrm{WF'}(\omega) \subset (T_{0,-}^* M)^2$ then $\omega$ is a Hadamard state. Here, as before,
$T_{0,\pm}^* M$ is the set of future/past directed nonzero null covectors in $T^*M$. 
We will not make use of the CCR algebra in this paper.

\section{Principal symbols of propagators}
Suppose $X$ and $Y$ are smooth manifolds of dimension $n_X$ and $n_Y$ respectively.
The principal symbol of a Fourier
integral distribution is a half-density taking values in the Maslov bundle. The Maslov bundle is a $\Z_4$-principal bundle and it appears because of additional factors of the form $\rmi^\sigma$ when changing phase functions
in the representation of the Fourier integral.
In particular any choice of local phase function defines a local trivialisation of the Maslov bundle. If a Fourier
integral distribution
$I^m \in \mathcal{D}'(X \times Y)$ given by
\begin{gather} 
I(x, y) = (2 \pi)^{-\frac{n_X + n_Y + 2 N}{4}} \int_{\R^N} \ee^{\rmi \phi (x, y, \theta) }
a( x, y, \theta) \dd \theta
\end{gather}
with non-degenerate homogeneous phase function $\phi$ and amplitude $a \in S^{m+\frac{n_X+n_Y-2N}{4}}_{cl}(X \times Y \times \R^N),$ the principal symbol is then
the transport to the Lagrangian $\Lambda_{\phi} =
 \iota_{\phi}(C_{\phi})$ of the half-density
 \begin{gather}
 a(\lambda) | \dd_{C_{\phi}}|^{\frac{1}{2}}: =   \frac{ a(\lambda) |\dd \lambda|^\half}{|D(\lambda,
\phi_{\theta}')/D(x,y,\theta)|^\half}
\end{gather} 
on $C_{\phi}$, where $\lambda=(\lambda_1,...,\lambda_n)$ are local coordinates on the critical manifold 
\begin{gather}
C_{\phi} =\{ (x,y,\theta); \dd_{\theta}\phi(x,y,\theta) = 0\},
\end{gather}
and where $\iota_{\phi}: C_{\phi} \to T^* (X \times Y) \setminus \{0\}$
is the map $(x, y, \theta) \to (x, \dd_x \phi, y, - \dd_y \phi).$ 
We refer to \cite[Section 25.1]{HoI-IV} for background.

In order to compute the principal symbol of $E \in I^{-\frac{3}{2}}(M \times M, C')$ note that by the decomposition
\begin{gather}
 E = S_+ - S_-, \quad S_\pm \in I^{-\frac{3}{2}}(M \times M, C'_{\pm})
\end{gather}
it is sufficient to compute the principal symbol of $S_\pm$. There is a natural half-density on the canonical relation  $C$ \eqref{CDEF}  (and hence on $C_\pm$ \eqref{Cpm}) constructed as follows.
For each point in $(x,\xi,x',\xi') \in C$ there exists a unique $s \in \R$ such that $(x,\xi) =  G^s(x',\xi')$. In that way, $C$ can be identified with an open subset 
of $(T^*_0M\setminus 0) \times \R$. We have $(T^*_0M\setminus 0) = T^*_{0,+} M\cup  T^*_{0,-}M$. The geodesic vector field, which is the Hamiltonian vector field
of $\frac{1}{2} g^{-1}$, defines a local flow on  $T^*_{0,\pm}M$ and the space of orbits is the symplectic manifold $\mathcal{N}$. The flow parameter $s$, the parameter $t$
and the symplectic volume form $\mathrm{dV}_{\ncal}=\frac{1}{d!} \omega^d$  on $\mathcal{N}$ then define a half-density 
\begin{equation} \label{dC} |\dd_C|^{\frac{1}{2}}=|\dd s'|^\frac{1}{2} \otimes |\dd s|^\frac{1}{2}  \otimes |\mathrm{dV}_{\ncal}|^\frac{1}{2}. \end{equation}
Here $d$ is half the dimension of $\mathcal{N}$ and $\omega$ is the symplectic form on $\omega$.
Note that if $\dim M = n, \dim T^* M = 2 n, \dim T^*M_{0,\pm}= 2n-1$ and 
$\dim C = 2 n$. Also, $\dim \ncal = 2 n- 2$ and $d=n-1$.

The principal symbol is a section of the half-density bundle of $C'$ times
a section of the Maslov bundle $M_{C'}$, where $C'$ is obtained from $C$
by changing the sign of $\eta$.\footnote{$M_C$ is denoted $L_C$ in  \cite{DH}.} The Maslov bundle is a flat complex line bundle associated to
a principal $\Z/4 \Z$ principal bundle. As in \cite[Section 6]{DG75}, the Maslov bundle of $C'$ is trivial and  has a global constant section. Indeed,
the subset with  $t =0$ is $N^*\Delta$ (the co-normal bundle of the diagonal)
and has a canonical constant section. It may be extended to a global
section of $M_{C'}$ by making it constant along null bicharacteristics. 

\begin{theo} \label{VOLC}
 The principal symbol of $S_\pm$ is the half-density $-\frac{\rmi}{2} \sqrt{2\pi} \cdot |\dd_C|^{1/2}$.
\end{theo}
\begin{rem}\label{VOLCREM}  In the relativistic setting, one defines the bicharacteristic flow as the Hamiltonian flow of $\half (\xi, \xi)_g$ rather than 
the Hamiltonian $|\xi'|_g$,  where $\xi'$ is the spacelike part of $\xi$,  in \cite{DG75}. The $\dd t$ in the relativistic setting corresponds to $|\xi'| \dd t$ in the non-relativistic setting. This explains
why one does not have a factor of $|\xi'|^{-1}$  in the product case corresponding
to $\Delta^{-\half}$  (see \eqref{productform}).  The principal symbol was computed by Duistermaat and H\"ormander \cite[Thm 6.6.1]{DH}. Their formula \corri{is different from ours by a factor} $\frac{1}{2}$ because their half-density was defined using the Hamiltonian
 flow generated by  $(\xi, \xi)_g$  rather than $\half (\xi, \xi)_g$. 
\end{rem}
\begin{proof}

 For the sake of completeness we give another proof here. Let $R_\Sigma$ be the restriction operator to a Cauchy surface $\Sigma$ and let $R^n_\Sigma$ be the operator that assigns to each
 function the future directed normal derivative on $\Sigma$. Then, we know that $R^n_\Sigma \circ E \circ (R_\Sigma)^* = \mathrm{id}$ and $R_\Sigma \circ E \circ (R_\Sigma)^* = 0$.
 The principal symbols of $S_\pm$ can be expressed as $a_\pm |d_C|^{1/2}$, where $a_\pm$ are functions on $C$. Since $E$ is a bi-solution of the wave equation in the sense that
 $\Box \circ E = E\circ \Box =0$ we are dealing with a product with vanishing principal symbol. \corri{Since the sub-principal symbol vanishes in our case the transport equation is $H_{p_k} a =0$ which implies that $a$ is a constant.}
 It follows that $a_\pm$ is constant along the Hamiltonian flow of the principal symbol of $\frac{1}{2} \Box$. Hence it is constant
 along the geodesic flow. Let $a^d_\pm$ be the restriction to the diagonal in $C_\pm$.
 Since the principal symbol of the future directed unit normal derivative is given by $\xi \to -\rmi \, n(\xi)$, where $n$ is the future directed normal vector-field one obtains from the decomposition
 $E = S_+ - S_-$ that $-(2 \pi)^{-\frac{1}{2}}(\rmi a^d_+ + \rmi a^d_-) = 1$ and $a_+^d-a_-^d =0$. This system has the unique solution $a^d_+=a^d_-=-\frac{\rmi}{2} \sqrt{2 \pi}$.
\end{proof}

\section{Proof of Theorem \ref{GUTZTH}}

\subsection{Translation by the Killing flow}

We first describe $E_t(x,y)$ as a Fourier integral operator.

\begin{theo}\label{COMPOKILLING} We have, 
$$E_t( x, y) \in I^{-\frac{7}{4}}(\R \times M \times M,  \ccal), $$ where
$$\ccal = \{(t, \tau, \zeta_1, \zeta_2) \in T^*(\R \times M \times M) \mid \tau + \zeta_1(Z)  = 0, (e^{t Z}(\zeta_1),  \zeta_2) \in C \}. $$

\end{theo}

The canonical relation is now parametrized by 
$$\R_t \times C \to \ccal, (t, \zeta_1,\zeta_2) \to (t, \zeta_1(Z),e^{t Z} (\zeta_1),\zeta_2).  $$
By Theorem \ref{VOLC}, the  principal symbol under the parametrization is given by,
\begin{lem} \label{SYMBOLFULL}
$$\sigma_{E_t}|_{C_\pm} = \mp \frac{\rmi}{2} (2 \pi)^{\frac{3}{4}} |\dd t|^{\half} \otimes | \dd_C|^{\half} .$$ 
\end{lem}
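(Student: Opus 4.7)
The plan is to start from the observation that $e^{\rmi t D_Z}$ acts on smooth functions of $x$ by pullback along the Killing flow $\Psi^t$, so that
\[
E_t(x, y) = E(\Psi^t x, y) = F^* E, \qquad F(t, x, y) := (\Psi^t x, y).
\]
Since $F \colon \R \times M \times M \to M \times M$ is a submersion, $F^*$ is defined on all distributions and sends Fourier integral distributions to Fourier integral distributions, pulling back wavefront sets. This reduces the computation of the symbol of $E_t$ to the symbol of $S_\pm$ already given in Theorem \ref{VOLC}.

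Choose a local oscillatory integral representation $S_\pm(x', y) = (2\pi)^{-n}\int e^{\rmi \phi_\pm(x', y, \theta)}\, a_\pm(x', y, \theta)\, d\theta$ with $N = n$ phase variables parametrizing $C_\pm$. Applying $F^*$ yields
\[
E_t(x, y) = (2\pi)^{-n}\int e^{\rmi \tilde\phi_\pm(t, x, y, \theta)}\, \tilde a_\pm(t, x, y, \theta)\, d\theta
\]
microlocally on $\mathcal{C}_\pm$, with $\tilde\phi_\pm(t, x, y, \theta) = \phi_\pm(\Psi^t x, y, \theta)$ and $\tilde a_\pm = a_\pm \circ F$. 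The phase $\tilde\phi_\pm$ is nondegenerate (since $\Psi^t$ is a diffeomorphism) and parametrizes $\mathcal{C}_\pm$ as described in Theorem \ref{COMPOKILLING}. Because the ambient dimension has grown from $2n$ to $2n+1$ while the number of phase variables is unchanged, the H\"ormander normalization $(2\pi)^{-(n_X + 2N)/4}$ for oscillatory integrals introduces a relative factor of $(2\pi)^{1/4}$ in the symbol, and the order shifts from $-\tfrac{3}{2}$ to $-\tfrac{7}{4}$, consistent with Theorem \ref{COMPOKILLING}.

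The key step is the identity
\[
|d_{C_{\tilde\phi_\pm}}|^{1/2} = |dt|^{1/2} \otimes |d_{C_{\phi_\pm}}|^{1/2}
\]
under the identification $C_{\tilde\phi_\pm} \cong \R \times C_{\phi_\pm}$ coming from $(t, \Psi^{-t} x', y, \theta) \leftrightarrow (t, x', y, \theta)$. Because $\tilde\phi_\pm = \phi_\pm \circ F$, the $\theta$-Hessian of $\tilde\phi_\pm$ equals that of $\phi_\pm$ pulled back, and the additional $t$-direction enters the Jacobian determinant defining the density on $C_{\tilde\phi_\pm}$ only as the block contributing $|dt|$. Here one uses that $\Psi^t$ is an isometry, so that its cotangent lift is a symplectomorphism, ensuring that the parametrization of $C_{\tilde\phi_\pm}$ induced by $F$ respects the canonical symplectic volume on $\ncal$ that underlies $|d_C|^{1/2}$.

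Putting the pieces together, Theorem \ref{VOLC} gives the symbol of $S_\pm$ as $-\tfrac{\rmi}{2}\sqrt{2\pi}\,|d_C|^{1/2}$, the decomposition $E = S_+ - S_-$ produces the sign $\mp$ on $C_\pm$, and combining $\sqrt{2\pi} = (2\pi)^{1/2}$ with the shift $(2\pi)^{1/4}$ yields the prefactor $(2\pi)^{3/4}$. The Maslov line bundle $M_{\mathcal{C}'}$ inherits a canonical trivial section extending that of $M_{C'}$ along the $t$-direction (since $t=0$ recovers the diagonal and the section is constant along the flow), so no additional phase appears. The main obstacle is establishing the half-density identity above: everything else is routine bookkeeping of Jacobians and $(2\pi)$ normalizations, but care is required with the chain rule for $F$ and the symplectic preservation properties of the cotangent lift of $\Psi^t$.
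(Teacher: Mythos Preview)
Your argument is correct and follows essentially the same route as the paper: the paper simply invokes the parametrization $\R_t \times C \to \ccal$ together with Theorem~\ref{VOLC} and notes one can alternatively use the restriction-map argument, whereas you spell out the details of the submersion pullback, the $(2\pi)^{1/4}$ normalization shift from the ambient dimension change, the half-density splitting $|dt|^{1/2}\otimes|d_C|^{1/2}$, and the Maslov trivialization. Your more explicit bookkeeping fills in exactly what the paper leaves implicit.
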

One can also derive this by using the properties of the restriction map to codimension one hypersurfaces as explained below.

\subsection{Completion of the proof of Theorem \ref{GUTZTH}}

The trace has now been defined in several equivalent ways, via. Lemma \ref{UphiFORM} or \eqref{TRUtDEF}-\eqref{TRUtDEF2}-\eqref{TRUtDEF3}.
It is now evident that
%\begin{equation} \label{TRCOMP} {\rm Tr} (U(t)) = \pi_* R_{\Sigma} \circ \Delta^* \dd_x (E_t(x,y) + E_{-t}(y, x))
%\end{equation}
\begin{equation} \label{TRUtDEFpullback}
 \Tr(U(t)) = \pi_* \left(R_{\Sigma} \circ \Delta^* \left(  \nu_x E_t(x,y) -  \nu_y E_t(x,y)   \right)\right),
\end{equation}

where $\Delta(x) = (x,x)$ is the diagonal empedding and $\pi: \Sigma \times \R \to \R$ is the natural projection. 

The principal symbol of $d_x (E_t(x,y) + E_{-t}(y,x) )$ is computed using Lemma
\ref{SYMBOLFULL}.
The operators $R_{\Sigma}$ and $\Delta^*$ commute in the sense
that one can first restrict and then pull back to the diagonal or one can
first pull back and then restrict. The pull-backs to the diagonal have slightly different meanings depending on the order. 

We next need to work out the canonical relation and principal symbol of $R_\Sigma$. To do so, we review the canonical relation and
symbol of  restriction operators $\gamma_Y$  to 
a general codimension one hypersurface $Y$ in a general manifold $X$. The relevant symbol computations  can be 
found in \cite[Section 5.2]{TZ13}, and we only briefly summarize the results.

We define suitable Fermi normal coordinates  $(y, y_n)$ in a tubular neighborhood of $Y = \{y_n = 0\}$ and let $\sigma, \xi_n$ be the symplectic dual
coordinates.   The kernel of the restriction operator is
  \begin{equation} \label{OPAHpre}     \gamma_Y (y; y_n, y') = (2\pi)^{-n} \int \ee^{\rmi \langle y-y', \sigma \rangle  - i y_n \xi_n}  \, \dd \xi_n \dd\sigma.
  \end{equation}
  The phase $\phi(y, y_n, y', \xi_n, \sigma) =  \langle y - y', \sigma
   \rangle - y_n \xi_n  $   is linear and non-degenerate, the number  of phase variables is $N = n_X=n$ and  the dimension of $X \times Y$ is $2n-1$. Then $C_{\phi} =\{(y, x_n, y', \sigma,
   \xi_n): y = y', y_n = 0 \} $ and
   $\iota_{\phi}(y, 0, y, \sigma, \xi_n) \to (y, \sigma, y, \sigma,
0,\xi_n). $

The complication arises that elements of the form $(y, \xi, y, 0)$ appear when $\xi \in N^*Y$ in the canonical relation of $\gamma_Y$
and similarly $(y, 0, y, \xi)$ arises in that of $\gamma_Y^*$. Hence they are not homogeneous canonical relations in the sense
of \cite{HoI-IV}, i.e. conic  canonical relations $C \subset (T^*X \setminus 0) \times (T^* Y \setminus 0)$.  As in \cite{TZ13}, we  temporarily  introduce a cutoff  $(1-\chi)$ 
and let
 \begin{equation} \label{OPAH}     \gamma_{Y,\chi} (y; y_n, y') = (2\pi)^{-n} \int \ee^{\rmi \langle y-y', \sigma \rangle  - i y_n \xi_n}  (1-\chi(y',y_n,\sigma',\xi_n)) \, \dd \xi_n \dd\sigma.
  \end{equation}
so that no such elements occur in the support of the cutoff and then
\begin{equation} \label{gammaY} \gamma_{Y,\chi}  \in I^{ \frac{1}{4}}(X \times Y,
\Lambda_Y),  \end{equation}  where $\Lambda_Y =   \{(y, \xi, y, \sigma) \in T^*_Y X
\times T^* Y: \xi|_{TY} = \sigma\}.$ 

The phase is $\phi(y, y_n, y') = (y -y', \sigma) - y_n \xi_n$ with integration 
variables $\sigma, \xi_n$. 
The critical point equation is thus as before,

$$C_{\phi} = \{(y, y', \sigma, y_n, \xi_n): \dd_{\xi_n} \phi = y_n =  \dd_{\sigma} \phi = y-y' =0\}.$$
Since $(n_X + n_Y +2N)/4 = n- \frac{1}{4}$ the principal symbol near points where $\chi=0$ is $(2 \pi)^{-1/4} | \dd_{C_{\phi}} |^{1/2}$, where $\dd_{C_{\phi}}$ is the Leray measure on  $\{y_n = 0, y=y'\}$, that is   $ d_{C_{\phi}}: =   |\dd \xi_n \dd y' \dd \sigma|$.

\begin{lem} \label{RESLEM} Suppose that $A$ is a pseudodifferential operator microlocally supported away from $N^*\Sigma$. Then
$R_{\Sigma} \circ A: C_0^{\infty}(M) \to C^{\infty}(\Sigma)$ is a homogeneous Fourier integral operator with canonical relation $\Lambda_{\Sigma}$ and principal symbol given by the Liouville volume half-density 
$ (2 \pi)^{-\frac{1}{4}} \sigma_A(\mathrm{pr}_2(\xi)) |\dd \xi_n \dd y' \dd \sigma|^{\half}$.
Here $\mathrm{pr}_2$ is the projection $T^*M \times T^*M \to T^*M, (\xi_1,\xi_2) \mapsto \xi_2$.
\end{lem}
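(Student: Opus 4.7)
The plan is to reduce the claim to the composition of Fourier integral operators by handling the non-conic issue via the microlocal support condition on $A$. First I would recall the oscillatory integral representation \eqref{OPAHpre} of the restriction kernel, whose phase $\phi(s, x_n, s', \sigma, \xi_n) = \langle s-s', \sigma\rangle - x_n \xi_n$ is linear and non-degenerate with critical manifold $C_\phi = \{(s, 0, s, \sigma, \xi_n)\}$ and image $\iota_\phi(C_\phi) = \Lambda_\Sigma$. The difficulty with $\gamma_\Sigma$ itself as a Fourier integral operator is that $\Lambda_\Sigma$ is not a conic canonical relation in $(T^*\Sigma \setminus 0) \times (T^*M \setminus 0)$: it contains elements $(s, 0, s, \xi_n \, dx_n) \in \Lambda_\Sigma$ with $\sigma = 0$ and $\xi_n \neq 0$, i.e.\ precisely the locus where the cotangent on $M$ lies in $N^*\Sigma$.

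The microlocal hypothesis on $A$ is there to bypass exactly this obstruction. Choose a conic cutoff $\chi$ supported in a small conic neighbourhood of $\{\sigma=0\}$ with $\chi = 1$ on a smaller such neighbourhood, and large enough that $1-\chi$ is identically $1$ on $\mathrm{WF}'(A)$ restricted to $\Sigma$. Then $\gamma_{\Sigma,\chi}$ in \eqref{OPAH} is an honest homogeneous Fourier integral operator in $I^{1/4}(M \times \Sigma, \Lambda_\Sigma)$, and $R_\Sigma \circ A$ differs from $\gamma_{\Sigma,\chi} \circ A$ by a smoothing operator because $(1 - (1-\chi)) A = \chi A$ is a pseudodifferential operator whose symbol vanishes on a conic neighbourhood of $\mathrm{WF}'(A)$ and is therefore smoothing. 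Reading off the amplitude convention from the excerpt with $n_X = n$, $n_Y = n-1$, $N = n$, the prefactor $(2\pi)^{-n}$ in \eqref{OPAHpre} corresponds to an amplitude $a = (2\pi)^{-1/4}$ in H\"ormander's normalisation, so by the formula for the principal symbol of $\gamma_{\Sigma,\chi}$ one obtains $\sigma_{\gamma_{\Sigma,\chi}} = (2\pi)^{-1/4} | dx_n \, ds' \, d\sigma |^{1/2}$ on $C_\phi$, as stated in the excerpt.

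It remains to compose. The canonical relation of $A$ as a Fourier integral operator is the diagonal $\Delta' \subset (T^*M \setminus 0)^2$, and the composition $\Lambda_\Sigma \circ \Delta' = \Lambda_\Sigma$ is clean and transversal away from $N^*\Sigma$ (which is now excluded by the cutoff). The general composition theorem for pseudodifferential operators with Fourier integral operators (see \cite[Thm.~25.2.3]{HoI-IV}) yields $R_\Sigma \circ A \in I^{1/4 + \mathrm{ord}(A)}(M \times \Sigma, \Lambda_\Sigma)$, and the principal symbol is the pointwise product of the two principal symbols with $\sigma_A$ evaluated on the $T^*M$-factor of $\Lambda_\Sigma$, namely at $\mathrm{pr}_2(\xi)$. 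Combining with the computation of $\sigma_{\gamma_{\Sigma,\chi}}$ above yields the claimed formula
\[
\sigma_{R_\Sigma \circ A} \;=\; (2\pi)^{-1/4}\, \sigma_A(\mathrm{pr}_2(\xi))\, | dx_n \, ds' \, d\sigma |^{1/2}.
\]
The only real obstacle is the first paragraph's non-conic issue, and it is resolved cleanly by the microlocal support hypothesis on $A$; the symbol computation is then a routine application of the FIO calculus.
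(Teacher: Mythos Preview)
Your approach is correct and matches the paper's own treatment. In fact the paper does not give a separate proof of this lemma: all the ingredients (the cutoff $\gamma_{\Sigma,\chi}$, its order $\tfrac14$, and the principal symbol $(2\pi)^{-1/4}|dx_n\,ds'\,d\sigma|^{1/2}$) are computed in the paragraphs immediately preceding the statement, and the sentence after the lemma simply remarks that ``the pseudodifferential operator is used to localise away from the problematic set $N^*\Sigma$'' --- exactly the mechanism you spell out.

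One small point of phrasing: writing ``$\chi A$ is a pseudodifferential operator whose symbol vanishes \ldots'' is slightly imprecise, since $\chi$ is an amplitude cutoff in the oscillatory integral for $\gamma_\Sigma$ rather than a symbol composed with $A$. The clean statement is that $R_\Sigma - \gamma_{\Sigma,\chi}$ has wavefront relation contained in a conic neighbourhood of $N^*\Sigma$ on the $T^*M$ side, which is disjoint from $\mathrm{WF}'(A)$ by hypothesis, so $(R_\Sigma - \gamma_{\Sigma,\chi})\circ A$ is smoothing by the wavefront calculus. Your underlying reasoning is correct; only the shorthand is a bit loose.
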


\corri{
The pullback operator $\rcal_{\Sigma}$ is not quite a homogeneous Fourier integral operator in the sense of \cite{HoI-IV} because 
the elements $(s, \xi, s, 0)$ occur in the wave front relation of $\rcal_{\Sigma}$  when $\xi \in N^*\Sigma$. Recall that a Fourier integral
operator is assumed to have a   homogeneous canonical relation
$C$, i.e. a wave front   $C \subset (T^*X \backslash 0) \times (T^* Y \backslash 0)$  disjoint from the zero section of the cotangent bundle. 
The purpose of this assumption is to ensure that the operator (and its dual) map smooth functions to smooth functions (rather than distributions); see \cite[(25.2.1)]{HoI-IV}.
By composing $\rcal_{\Sigma}$ with the cut-off operator $A$ as above,
we remove such `zeros in the canonical relation' and obtain a homogeneous Fourier integral operator of order $\frac{1}{4}$ with  principal symbol of $\rcal_{\Sigma}$  given by $(2 \pi)^{-\frac{1}{4}} |\dd \xi_n \dd y' \dd \sigma |^{\half}$ over $WF'(A)$ (the micro-support of $A$). This construction is also used to show that the restriction map is well defined on distributions whose wavefront set does not intersect the conormal.
More generally,  if  $B$ is a Fourier integral operator whose canonical relation
does not contain covectors of the form $(\eta,\xi)$ with $\eta \in N^*\Sigma$,  then  products of the form $R_{\Sigma} \circ B$ are   homogeneous Fourier integral operators. We will now use this fact without further mention.}

We now compute the canonical relation and principal symbol of $ \pi_* R_{\Sigma} \circ \Delta^*   \nu_x E_t(x,y) =
\pi_*  \Delta^*  \circ (R_{\Sigma} \times R_{\Sigma})  \nu_x E_t(x,y) $, where $R_{\Sigma}\times R_{\Sigma}$ is the restriction
$M \times M \to \Sigma \times \Sigma$. We recall the canonical relation $\ccal$ from  \ref{COMPOKILLING}, and 
$$\Lambda_{\Sigma} =  \{(\zeta |_{T \Sigma}, \zeta) \in T^* \Sigma \times T^*_{\Sigma} M\}. $$
Note that $ (R_{\Sigma} \times R_{\Sigma})  \nu_x E_t(x,y) $ is the restriction of the distribution  $\nu_x E_t(x,y)$, rather
than a composition of operator kernels. Below, we denote by $T^{* }_{0, \Sigma} M$ the lightlike covectors in $T^*_{\Sigma} M$.
%As in \eqref{TRUtDEF}, 
%$$R_{\Sigma} \Delta^* *d_x E(x, y) =   R_{\Sigma} \nu_x E(x,y)  |_{y=x} \;\mathrm{dV}_\Sigma(x).$$

\begin{lem} \label{CR1} Under the assumption of clean composition, $$(R_{\Sigma} \times R_{\Sigma})  \nu_x E_t(x,y) \in I^{-\frac{1}{4}}(\R \times \Sigma \times \Sigma, (\Lambda_{\Sigma}
\times \Lambda_{\Sigma}) \circ \ccal), $$where %and
$$\begin{array}{l}   (\Lambda_{\Sigma}
\times \Lambda_{\Sigma}) \circ \ccal 
%\{(\zeta |_{T \Sigma}, \zeta) \in T^* \Sigma \times T^*_{\Sigma} M
%\}
%\circ  \{(t, \tau, \zeta_1, \zeta_2) \in T^*(\R \times M \times M) \mid \tau = \zeta_1(Z) , (e^{t Z}(\zeta_1),  \zeta_2) \in C\} \\ \\
:= \{(t, \tau, (e^{t Z}\zeta_2) |_{T \Sigma}, \zeta_1 |_{T \Sigma}) \mid \tau = \zeta_1(Z), (\zeta_2, \zeta_1) \in C  \}.  \end{array}$$
Then, there exists a unique $s = s(\zeta_1, \zeta_2)$ such that $\zeta_2 = G^s(\zeta_1)$ and $(e^{t Z}\zeta_2)  \in T^*_{0,\Sigma} M$.
The  principal symbol of $(R_{\Sigma} \times R_{\Sigma})  \nu_x E_t(x,y)$  in the parametrization,
$\iota: \R \times T^{* } _{0,\Sigma} M \to (\Lambda_{\Sigma}
\times \Lambda_{\Sigma}) \circ \ccal $ defined by,
$$\iota(t, \zeta_1) = (t, \tau, e^{t Z} G^s (\zeta_1) |_{T \Sigma}, \zeta_1 |_{T \Sigma}), \;\;  (e^{t Z} G^s (\zeta_1) |_{T \Sigma}, \zeta_1 |_{T \Sigma}) \in T^*_{0, \Sigma} M\times T^*_{0, \Sigma}M. $$
With $\zeta_1 \in T_x^* M$ and $e^{t Z}G^s  \zeta_1 \in T^*_y M$ with $x, y \in \Sigma$, we have
 $$ (2 \pi)^{\frac{1}{4}} 
 \frac{|\langle \zeta_1, \nu_x \rangle|}{|\langle \zeta_1, \nu_x \rangle|^{\half} |\langle (e^{t Z}G^s \zeta_1), \nu_y \rangle|^{\half}} 
 |\dd t|^{\half} \otimes |\mathrm{d} \mu_{T^* _{0,\Sigma} M}|^{\half}.$$
 
\end{lem}
Above, $\mathrm{d} \mu_{T^* _{0,\Sigma} M}$ is the symplectic volume measure on $T^* _{0,\Sigma} M$, the lightcone bundle with
footpoint on $\Sigma$. The absolute value  $|\langle \zeta_1, \nu_x \rangle|$ is due to the change in sign of the symbols in Lemma \ref{SYMBOLFULL}. The assumption of clean composition is not needed for this statement to hold for $t$ in a neighborhood of zero.

 \begin{proof} 
The composition of canonical relations is immediate from Theorem \ref{COMPOKILLING} and \eqref{gammaY}. To compute the symbol we use
 Lemma \ref{SYMBOLFULL} and \eqref{dC},  keeping in mind Remark \ref{VOLCREM}.
  the operator with kernel $\nu_x E_t(x,y)$ has order $-\frac{3}{4}$ and principal symbol equal to $\frac{1}{2} (2 \pi)^{\frac{3}{4}} 
| \langle \nu_x, \xi \rangle| |\dd t|^{\half} \otimes |\dd_C|^{\half}$ on each component $C_+$ and $C_-$.  We note that if we identify
$\ncal$ with the cross-section $T^*_{0, \Sigma}M$, then   \eqref{dC}
is equivalent to  $$|\dd_C|^{\frac{1}{2}}=|\dd s'|^\frac{1}{2} \otimes |\dd s|^\frac{1}{2}  \otimes |\mathrm{dV}_{\ncal}|^\frac{1}{2} =
|\dd s'|^\frac{1}{2} \otimes |\dd s|^\frac{1}{2}  \otimes  |\mathrm{d} \mu_{T^* _{0,\Sigma} M}|^{\half}.$$

   We then compose the  symbol $ (2 \pi)^{-\frac{1}{4}}|\dd \xi_n \dd y \dd \sigma|^{\half} \otimes  (2 \pi)^{-\frac{1}{4}}|\dd \xi_n' \dd y' \dd \sigma'|^{\half}  $ of the  restriction (Lemma \ref{RESLEM})  to  $\R \times \Sigma \times \Sigma$ with the half-density $|d_C|^{\half}$. In a point of the composition, $(x, z) \circ (z, y) = (x,y)$,
   we refer to $(x,y)$ as output variables and $z$ as the input variable. Here, the points $(y, \sigma), (y', \sigma')$ are output variables 
   and $(s,s')$ and $(\xi_n, \xi_n')$ are the input variables. In symbol composition we obtain a density in the input variable and integrate over
   it to get a half density in the output variables.  The flowtime coordinate $s$ is related to the Fermi normal
   coordinates $(y_n, \xi_n)$ by $ds = \frac{dy_n}{|\xi_n|}$. Making this substitution in the half-densities, the symplectic pairing
   between $dy_n$ and $d \xi_n$ eliminates this pair in the composition, leaving $$\frac{1}{|\xi_n|} \frac{1}{|\xi_n'|} 
    |\mathrm{d} \mu_{T^* _{0,\Sigma} M}|^{\half}
   = \frac{1}{|\langle \zeta_1, \nu_x \rangle|^{\half} |\langle (e^{t Z}G^s \zeta_1), \nu_y \rangle|^{\half}} 
 |\mathrm{d} \mu_{T^* _{0,\Sigma} M}|^{\half}, $$
 where we evaluate $\xi_n, \xi_n'$ at the relevant points.
   Multiplying by $|\langle \zeta_1, \nu_x \rangle|$, the universal constant, and tensoring with $|dt|^{\half}$ completes the calculation of the
   symbol.

\end{proof}

Next, we pullback under the diagonal embedding $\Delta$ and pushforward under $\pi$  to obtain the canonical relation and symbol
or \eqref{TRUtDEF}. 
 Recall that by Proposition \ref{symplmap}
we have a well defined symplectic diffeomorphism $\phi: \mathcal{N} \to T^* \Sigma \setminus 0$.

\begin{lem} \label{CR2} The canonical relation of \eqref{TRUtDEF} is given by, \begin{equation} \label{PF1}\begin{array}{lll}WF (\pi_* (R_{\Sigma} \Delta^* \nu_x E_t(x,y) )) & \subseteq &
 \{(t, \tau): \exists \gamma, \ee^{t Z} \gamma = \gamma \in \ncal, \tau =  H(\gamma)\}.
 \end{array} \end{equation}
 The principal symbol  of the distribution $\Tr U(t)$ at $t=0$ is given by 
$$
 2 (n-1) \mathrm{Vol}(\mathcal{N}_{H \leq 1}) (2 \pi)^{-n +1}\mu_{n-1}(t),
 $$
 where $\mu_{n-1}(t)$ is the distribution defined by the oscillatory integral
$$
 \mu_{n-1}(t) = \frac{1}{2}\int_{-\infty}^\infty \mathrm{e}^{-\rmi \tau t} |\tau|^{n-2} \mathrm{d}\tau.
$$

\end{lem}

\begin{proof}
We  pull back the canonical relation  $\Lambda_{\Sigma} \circ \ccal $ of Lemma \ref{CR1}  to the diagonal to get,
\begin{equation} \label{CRRDELTA} \begin{array}{lll}WF (R_{\Sigma} \Delta^* d_x E_t(x,y) & \subseteq &
\{(t, \tau,  (\phi \ee^{t Z} \phi^{-1} \eta)  - \eta))  \mid  \tau = \eta(Z) , \eta \in T^* \Sigma \setminus 0 \}. 
\end{array}\end{equation}
The  pushforward of this Lagrangian manifold under $\pi$ is then
\begin{equation} \label{PF}\begin{array}{lll}WF (\pi_* (R_{\Sigma} \Delta^* d_x E_t(x,y) )) & \subseteq &
 \{(t, \tau): \exists \gamma, \ee^{t Z} \gamma = \gamma \in \ncal, \tau =  H(\gamma)\}.
 \end{array} \end{equation}

 This proves the first statement of Theorem \ref{GUTZTH}, namely that the singular times are a subset of the periods of
 $e^{t Z}$ acting on $\ncal.$ The next and main step is to compute the principal symbol of the trace at each period.
 When we pull back to the diagonal, $e^{t Z}G^s( \zeta_1) =\zeta_1$, and then 
   $$ \frac{\langle \zeta_1, \nu_x \rangle}{|\langle \zeta_1, \nu_x \rangle|^{\half} |\langle e^{t Z}G^s (\zeta_1), \nu_y \rangle|^{\half}} 
= 1. $$

 We begin with the symbol at $t=0$.
 The principal symbol is a homogeneous half-density on $T^*_t \R$ and therefore may be represented as a constant
 multiple of $|\dd\tau|^{\half}$ on each half-line $T_+^* \R,$ resp. $T^*_- \R$.

  Restriction to the diagonal and integration over $\Sigma$
 gives an element in $I^{(n-1)-3/4}(\R)$ with principal symbol at $T^*_0 \R$ given by $\mathrm{res}(H^{-n+1}) (2 \pi)^{3/4-(n-1)} |\tau|^{n-2} |\dd \tau|^{1/2}$. Here homogeneity of the principal symbol was used and additional factors appear due to an excess $e=n-2$ (see for example
\cite[Prop. 25.1.5']{HoI-IV}) of the phase function in the representation of the distribution.  

\end{proof}
 
\subsubsection{Principal symbol at $T \in \pcal$} To complete the proof of  Theorem \ref{GUTZTH},  we need to compute the principal symbol
at periods $T \in \pcal$.

\begin{lem} \label{SYMBOLTINP} If the closed orbits are non-degenerate, then the symbol of \eqref{TRUtDEF}  at $t = T \in \pcal$ is
the half-density on  $T_{T}^*\R$  given by,
$$\frac{\ee^{-\frac{i\pi}{2} m_{\beta}}}{ \sqrt{|\det(I - P)|}} |\dd \tau|^{\half}$$
where $\tau$ is the linear coordinate on $T_T^*\R$.
\end{lem}

\begin{proof} 
The principal symbol at a period $t \in \pcal$ is computed by the Duistermaat-Guillemin formula \cite[Lemma 4.3-Lemma 4.4]{DG75}. The calculation
can be (and has been) formalized as a composition law for `weighted canonical relations', i.e. canonical relations equipped with half-densities.
Here, one ignores the Maslov index for expository simplicity.  

The first of the  weighted canonical relations one is composing is the spacetime graph \eqref{CRRDELTA} of the Killing flow on $\ncal$, together
with the half density $ R_{\Sigma}^*\Delta^* \sigma_{E_s}$ where $\sigma_{E_s}$ is given in Lemma \ref{SYMBOLFULL}. The second is
the canonical relation of $\pi_*$ together with its half-density symbol.  The composition of the canonical relations is given by \eqref{PF}. 
To compose the half-density symbols, let us review the definition.
Suppose that $X, Y$ are compact manifolds and $\Gamma \subset T^*(X \times Y) \setminus 0$, $\Lambda \subset T^* Y \setminus 0$
are Lagrangian submanifolds. Let $\Gamma' = \{(x, \xi, y, \eta) \colon (x, \xi, y, - \eta) \in \Gamma\}$. The composition of $\Gamma $ and $\Lambda$ is
the Lagrangian submanifold 
defined by
$$\Gamma' \circ \Lambda = \{(x, \xi) \colon \text{there exists $(x, \xi, y, \eta) \in \Gamma'$ where
$(y, \eta) \in \Lambda$}\}. $$
 Moreover if $F \subset \Gamma' \times \Lambda$ is
the fiber product, i.e.  set of points $((x, \xi, y, \eta), (y, \eta))$, and if the
fibers are compact, the half-densities
compose on each tangent space to give a density on the fiber with
values in half-densities on the composition: Let $q \in \Gamma \circ \Lambda$. Let $F_q$ be the fiber
over $q$ and let $m = (m_1, m_2, m_2) \in F_q$. Then,
\begin{equation}\label{ISO}  |T_{(m_1, m_2)} \Gamma|^{\half}
\otimes |T_{m_2} \Lambda|^{\half} \simeq |T_m F_q| \otimes |T_{m_1, m_2} \Gamma \circ
T_{m_2} \Lambda|^{\half} . \end{equation}

Following \cite{HoI-IV} Theorem 25.2.3, let us  denote the half-density on $\Gamma$ by $\sigma_2$ and the
half-density on $\Lambda$ by $\sigma_1$ and let $\sigma_1 \times \sigma_2$ denote the density on $T_f F$ with values in half-densities on
on $ T_{m_1, m_2} \Gamma \circ
T_{m_2} \Lambda $. We review the proof in Appendix \ref{SYMBOLAPP}. Then integration over $F$ 
gives a half-density on the composite. At a point $q \in \Gamma \circ \Lambda, $
\begin{equation} \label{COMPO} \sigma_1 \circ \sigma_2 |_{q}= 
\int_{F_{q}} \sigma_1 \times \sigma_2. \end{equation}

For the compositon in \eqref{PF}, the fiber over $(t, \tau) \in T^*_t \R$ is the set of points $$F_t = \{\gamma \in \ncal:
e^{t Z} \gamma = \gamma\}.$$
The fixed point sets of $e^{t Z}$ on $\ncal$  consist of the union of periodic orbits $\beta$ of the Killing flow on $\ncal$. They  vary considerably as one varies $(M,g)$,  and we restrict to the case where
the periodic orbits $\beta$ are non-degenerate. Let us define the term.  Since $e^{t Z}$ is a Hamiltonian flow on $\ncal$ with Hamiltonian $H(\gamma) = \langle Z, \zeta)$
for $\zeta \in \gamma$, the flow preserves the level sets $\{H(\gamma) =  \tau\}$. Let
$\gamma \in \beta$ and let $\acal:= D_{\gamma} \ee^{t Z}:  T_{\gamma} \{H =  \tau\} \to T_{\gamma} \{H =  \tau\}$.   Let $V =  T_{\gamma} \{H(\gamma)= E\}$.
The restriction of the symplectic form  $\Omega$ to $V: = T_{\gamma} \{H = E\}$ (for any $E$) satisfies $\Omega(\xi_H, \cdot) = 0$,  and  non-degeneracy of $\beta$ means that  the  nullspace of $\Omega |_V$  is spanned by $\xi_H$. Also,
$(I - \acal) \xi_H = 0$ and non-degeneracy implies that $\ker (I - \acal) = \R \xi_H$, where $\ker (1 - \acal)$ is the kernel of $(I - \acal)$ on $V$. Hence $\Omega $ is a symplectic form
on $V/\ker(I - \acal)$.
Moreover,        $v \in \ker (I -\acal)$ is symplectically orthogonal to the range
$\rm{Im}(I - \acal) $  of
$(I - \acal)$ on $V$, i.e. $\ker (1- \acal) \subset  \rm{Im} (I - \acal)^{\perp}$. Non-degeneracy implies that ${\rm Im}(I - \acal) = V/\R \xi_H$,
so that $(I - \acal)V$ is a symplectic subspace of $(V, \Omega)$. Moreover, $V/\rm{Im}(I - \acal) = \R \xi_H$.
We define $(I - P): V/\ker  \to V/\ker $ to be the linear symplectic  quotient map induced by $I - \acal$.\footnote{Our notation differs from that of \cite{DG75}, where $V$ is the symplectic vector space $T_{\gamma} \ncal$.}

Returning to symbol composition, we claim that the integrand of \eqref{COMPO} is given on each component $\beta$ of
the set of periodic orbits of period $t$  by
\begin{equation} \label{DET(I-P)} \sigma_1 \times \sigma_2 = \frac{\ee^{-\frac{i\pi}{2} m_{\beta}}}{ \sqrt{|\det(I - P)|}} \dd t \otimes |\dd \tau|^{\half}, \end{equation}
where $\sigma_2$ is given by Lemma \ref{SYMBOLFULL},where $\sigma_1$ is the principal symbol of $\pi_* \Delta^*$ and where $\dd t$
is the natural 1-form on $\beta$ (given its parametrization). This follows by tracing through the isomorphism of \eqref{ISO} and of course
is done in generality in \cite{DG75}. One of the key points is that the principal symbol of $\pi_* \Delta_*$ is the symplectic volume $\half$-density 
on the diagonal (\cite[Lemma 6.3]{DG75}). Hence both of the half-densities being composed in the trace $\rm{Tr}(U(t))$ are volume
half-densities, one on the diagonal and one on the graph of $P$. In this case the exact sequence \eqref{5.2} 
\cite[(5.2)]{DG75}  is equivalent to the exact sequence \eqref{4.1} \cite[(4.1)]{DG75}. The composite half-density is then calculated
by tracing through the isomorphisms in \eqref{4.1} and as in \cite{DG75} it gives \eqref{DET(I-P)},at least up to the Maslov factor. The latter is calculated as in \cite[Section 6]{DG75} or \cite[Section 6]{DH}.

\end{proof}

\begin{rem} As pointed out in \cite{R91,M94}, the Maslov index can be identified with the Conley-Zehnder index of the periodic orbit, which is
manifestly independent of the choice of coordinates and is an intrinsic
invariant of the orbit.

\end{rem}
 
\section{Novelty of the method}

In this section, we use the operator pencil formulation to show that
our trace formula is not a simple consequence of separating variables and
employing the Duistermaat-Guillemin trace formula.

If $X$ commutes with $P$ then the pencil \eqref{pencil} can be factorised into linear scalar factors using spectral calculus. If they do not commute one might hope that one can achieve a factorization modulo smoothing operators
using two self-adjoint pseudodifferential operators. The following theorem shows that this is possible only in very special cases.

\begin{lem}
 Suppose there exist self-adjoint classical pseudodifferential operators $Q_1,Q_2$ on $\Sigma$ such that
 $$
  ( P - 2 \I \lambda X - \lambda^2) + (Q_1 -\lambda)(Q_2 -\lambda) = K(\lambda),
 $$
 where $K$ is a polynomial family of smoothing operators.
 Then $\mathcal{L}_\beta \tilde h =0$, i.e. the shift vector field is a Killing field for the metric $\tilde h$.
\end{lem}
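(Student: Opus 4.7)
The plan is to compare coefficients of $\lambda$ on both sides of the postulated identity, use the formal self-adjointness of $P$ to force $[Q_1,Q_2]$ to be smoothing, and then translate the resulting vanishing Poisson bracket of principal symbols into the Killing condition for $\beta$.

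Expanding $(Q_1-\lambda)(Q_2-\lambda)=Q_1Q_2-\lambda(Q_1+Q_2)+\lambda^2$ and collecting like powers of $\lambda$, the polynomiality of $K(\lambda)$ with smoothing coefficients immediately gives
\begin{gather*}
 Q_1+Q_2+2\rmi X\in\Psi^{-\infty}(\Sigma),\qquad Q_1Q_2+P\in\Psi^{-\infty}(\Sigma).
\end{gather*}
Since $X$ has order $1$ and $P$ has order $2$, both $Q_1,Q_2$ must be classical pseudodifferential operators of order exactly $1$, with real principal symbols $q_1,q_2$ (by self-adjointness). Using that the principal symbol of $X=\tfrac{1}{2}(\mathcal{L}_\beta-\mathcal{L}_\beta^*)$ equals $\rmi\beta(\xi)$, one reads off
\begin{gather*}
 q_1+q_2=2\beta(\xi),\qquad q_1 q_2=-|\xi|_{\tilde h}^2,
\end{gather*}
so $q_{1,2}(\xi)=\beta(\xi)\pm r(\xi)$ with $r(\xi):=\sqrt{\beta(\xi)^2+|\xi|_{\tilde h}^2}$, which is smooth and positively homogeneous of degree $1$ on $T^*\Sigma\setminus 0$ since the discriminant is strictly positive off the zero section.

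The key observation is that $-P$ is formally self-adjoint, so $Q_1Q_2$ is self-adjoint modulo smoothing; combined with the individual self-adjointness of $Q_1,Q_2$, this forces $[Q_1,Q_2]\in\Psi^{-\infty}$, and hence at the principal-symbol level
$$\{q_1,q_2\}=0 \quad\text{on } T^*\Sigma\setminus 0.$$
A direct expansion, together with $\{\beta(\xi),\beta(\xi)^2\}=0$, yields
$$\{q_1,q_2\}=\{\beta+r,\beta-r\}=-2\{\beta,r\}=-\tfrac{1}{r}\{\beta(\xi),|\xi|_{\tilde h}^2\},$$
so $\{\beta(\xi),|\xi|_{\tilde h}^2\}=0$ everywhere on $T^*\Sigma\setminus 0$.

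Finally, the Hamiltonian vector field of $\beta(\xi)=\beta^i\xi_i$ on $T^*\Sigma$ is the cotangent lift of the vector field $\beta$ on $\Sigma$, and a short computation in local coordinates gives
$$\{\beta(\xi),|\xi|_{\tilde h}^2\}=(\mathcal{L}_\beta \tilde h^{-1})(\xi,\xi).$$
Vanishing of the right-hand side for every $\xi$ is equivalent to $\mathcal{L}_\beta \tilde h^{-1}=0$, hence to $\mathcal{L}_\beta \tilde h=0$, which is exactly the claim. The main obstacle I anticipate is the commutator step: it is not the hypothesized factorization per se but its combination with the self-adjointness of $P$ that produces $[Q_1,Q_2]\in\Psi^{-\infty}$. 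Once this identity is in hand, the Poisson-bracket manipulation and its reading as the Killing property of $\beta$ are routine.
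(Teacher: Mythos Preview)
Your proof is correct and follows essentially the same route as the paper's: extract the principal symbols $q_{1,2}=\beta\pm\sqrt{\beta^2+|\xi|_{\tilde h}^2}$ from the coefficients, use self-adjointness to force $[Q_1,Q_2]\in\Psi^{-\infty}$, and read the vanishing Poisson bracket as $\mathcal{L}_\beta\tilde h=0$. The only cosmetic difference is in the commutator step: the paper takes the adjoint of the entire pencil to obtain $(Q_2-\lambda)(Q_1-\lambda)=(Q_1-\lambda)(Q_2-\lambda)\bmod C^\infty$, whereas you use directly that $Q_1Q_2\equiv -P$ is self-adjoint modulo smoothing, so $(Q_1Q_2)^*=Q_2Q_1$ gives the same conclusion more quickly.
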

\begin{proof}
 Considering the equation on the level of principal symbols one obtains that $Q_1, Q_2$ must be first order operators with
 \begin{gather*}
  \sigma_{Q_1} \sigma_{Q_1}  = -\sigma_P = -\tilde h, \quad \sigma_{Q_1} + \sigma_{Q_2} = - 2\rmi \sigma_X=2 \beta. 
 \end{gather*}
 This has the solution (unique up to interchanging the symbols of $Q_1$ and $Q_2$)
 $$
  \sigma_{Q_\pm} =  \beta \pm \sqrt{ h + \beta^2}.
 $$
 Now note that the pencil is self-adjoint in the sense that  $( P - 2 \I \lambda X - \lambda^2)^*= P - 2 \I \overline \lambda X - \overline\lambda^2$.
 This gives
 $$
  (Q_2 -\lambda)(Q_1 -\lambda) =  (Q_1 -\lambda)(Q_2 -\lambda) \mod C^\infty
 $$
 and therefore
 $$
  Q_2 Q_1 = Q_1 Q_2  \mod C^\infty.
 $$
 The commutator $[Q_1,Q_2]$ equals $[Q_1+Q_2,Q_2]$ and its principal symbol is therefore equal to the Poisson bracket $-\rmi \{2 \beta,  \beta - \sqrt{ h + \beta^2}\}$.
 Therefore we obtain
 $$
  \mathcal{L}_\beta \left( \beta - \sqrt{ h + \beta^2} \right) =0
 $$
 which is equivalent to $\mathcal{L}_\beta h =0$.
 \end{proof}

This means that the eigenvalues of $D_Z$ do not in general reduce in a simple way to eigenvalues of a self-adjoint scalar operator. For the purposes of a spectral asymptotics it might be possible to treat the operator pencil
using classical pseudodifferential methods, for example by analyzing the pencil as a parameter dependent pseudodifferential operator, such an approach would be non-covariant and somewhat unnatural.

\section{Examples} \label{examples}

The simplest examples are product space-times in which case our formula reduces to the Duistermaat-Guillemin trace formula.

\subsection{Ultrastatic Spacetimes} \label{DG-Product}

Assume that $M = \R \times \Sigma$ has metric $g = -dt^2 + h$, where $(\Sigma,h)$ is a connected $d$-dimensional closed Riemannian manifold. Then $(M,g)$ is a globally hyperbolic stationary spatially compact spacetime with Killing vector field $\partial_t$. The dimension $n$ of $M$ is $n=d+1$.

Separation of variables shows that the eigenvectors of eigenvalue $\pm \lambda_j $ of $-\rmi \partial_t$  on $\ker \Box$ are the functions of the form $\psi_j(t,x) = \ee^{\pm \rmi \lambda_j t} \phi_j$, where $\phi_j$ is an eigenfunctions of the Laplace operator $-\Delta$ on $\Sigma$ with eigenvalue $\lambda_j^2$. The function $t$ is also in $\ker \Box$ and it is a generalised eigenvector of $D_Z$ with eigenvalue $0$.
The spectrum of $D_Z$ on $\ker \Box$ consists therefore of the eigenvalues $\pm \lambda_j$ and the subspace $\mathrm{span} \{1,t \}$ gives the only non-trivial Jordan-block in the spectral decomposition of $D_Z$.

In this case the retarded and advanced fundamental solutions can be written explicitly in terms of functions of the Laplace operator and convolution in the time variable.
We have
\begin{gather}
 E_\mathrm{ret} (t,t') = \Theta(t-t') \Delta^{-\frac{1}{2}} \sin{ (t-t') \Delta^{\frac{1}{2}}},\\
 E_\mathrm{adv} (t,t') = -\Theta(t'-t) \Delta^{-\frac{1}{2}} \sin{ (t-t') \Delta^{\frac{1}{2}}},\\
 E (t,t') = \Delta^{-\frac{1}{2}} \sin{ (t-t') \Delta^{\frac{1}{2}}},
\end{gather}
where $\Theta$ is the Heaviside function. A decomposition $E= S_+ - S_-$ would be given by
\begin{gather} \label{productform}
 S_- = -\rmi \left( \frac{1}{2} \Delta^{-\frac{1}{2}} \exp( - \rmi  (t-t') \Delta^{\frac{1}{2}}) (1-P_0) + \frac{1}{2}(1 - \rmi t)(1+\rmi t')  P_0\right), \\ 
 S_+ =  -\rmi \left( \frac{1}{2} \Delta^{-\frac{1}{2}} \exp( + \rmi  (t-t') \Delta^{\frac{1}{2}}) (1-P_0) + \frac{1}{2}(1 + \rmi t)(1-\rmi t') P_0\right),
\end{gather}
where $P_0$ is the orthogonal projection onto $\ker \Delta$. There is no decomposition that is invariant under the Killing flow because of the non-trivial Jordan block.

The half-wave group on $(\Sigma,h)$ is the unitary group $V(t) = \ee^{\rmi t \sqrt{-\Delta}}$. The trace of the half-wave group of a compact Riemannian manifold is the
distribution trace,
$$\Tr V(t) = \sum_{ \lambda_j \in Sp(\sqrt{-\Delta})} \ee^{\rmi t \lambda_j}.$$ By the above we have $\Tr U(t) = \Tr V(t) + \Tr V(-t)$.
The singular points of $\Tr V(t)$ occur when $t$ lies in the length spectrum $\mathrm{Lsp}(\Sigma,h)$,
i.e. the set of lengths of closed geodesics. We denote the length of a closed geodesic
$\gamma$ by $L_{\gamma}.$  For each $L = L_{\gamma} \in \mathrm{Lsp}(\Sigma,h)$ there are at
least two closed geodesics of that length, namely $\gamma$ and $\gamma^{-1}$ (its
time reversal).  The singularities due to these lengths are identical so one often
considers the even part of $\Tr V(t)$ i.e. $\Tr E(t) =\frac{1}{2} \Tr U(t)$ where $E(t)= \cos (t \sqrt{-\Delta})$.

A Riemannian manifold is said to be `bumpy' if closed geodesics if  all closed geodesics are isolated and non-degenerate.
The trace of the wave group on a compact, bumpy Riemannian manifold $(\Sigma,h)$ has the
singularity expansion
\begin{equation} \Tr V(t) = e_0(t) + \sum_{L \in \mathrm{Lsp}(\Sigma,h)} e_L(t)
\end{equation}
where the sum runs over 
with
$$e_0(t) = a_{0,-d}(t+ \rmi 0)^{-d} + a_{0, -d+1}(t+ \rmi 0)^{-d+1}+\ldots $$
\begin{equation}  \begin{array}{lll}
e_L(t) &=& a_{L,-1} (t-L+ \rmi 0)^{-1} + a_{L,0}\log (t-(L+\rmi 0))\\[10pt]
&+&a_{L,1} (t-L+\rmi 0)\log (t-(L+\rmi 0)) +\ldots\;\;,\end{array} \end{equation}
where $\ldots$ refers to terms of ever higher degrees \cite{DG75}.
   The principal wave invariant at $t = L$ in the case of a
non-degenerate closed geodesic is given by \eqref{TrUt},
 $$a_{L,-1} = \sum_{\gamma:L_{\gamma}=L} \frac{1}{2 \pi \rmi}
\frac{\ee^{-\frac {\rmi \pi }{2} m_\gamma}L_\gamma^{\#}}{|\det(I-P_{\gamma})|^{\half}},$$
where $\{\gamma\}$ runs over the set of closed geodesics, and where $L_\gamma$,
$L_\gamma^{\#}$, $m_\gamma$, resp.\ $P_\gamma$ are the length, primitive length, Maslov index and linear
Poincar\'e map of $\gamma$.

\subsection{Static spacetimes}
Let $(\Sigma,h)$ be a $(n-1)$-dimensional closed Riemannian manifold and let $N \in C^\infty(\Sigma)$ be a positive smooth function. Then, the spacetime $\R \times \Sigma$ with metric
$$
  g=N^2 (-\dd t^2 + h)
$$
is stationary and globally hyperbolic. One computes
$$
N^{1+\frac{n}{2}} \Box_g N^{1-\frac{n}{2}} =  \partial_t^2 - \Delta_{h} + V, 
$$
where the potential $W$ is given by
$$
 V = N^{\frac{n}{2}-1} (\Delta_h N^{1-\frac{n}{2}}).
$$
Hence, the eigenvalues are the positive and negative square roots of the eigenvalues of the self-adjoint operator  $-\Delta_{h} + V$ on $\Sigma$.
This operator always has zero as an eigenvalue with eigenfunction $\phi_0=N^{1-\frac{n}{2}}$. As in the ultrastatic case this gives rise to the two dimensional generalised eigenspace
for $D_Z$ with eigenvalue zero that is spanned by the functions $1$ and $t$. Hence in this case there is a non-trivial Jordan block in the decomposition of $D_Z$. Since the energy quadratic form is non-negative
it follows that even though $V$ may fail to be non-negative the operator $-\Delta_{h} + V$ never has negative eigenvalues.

\subsection{Stationary pp-wave spacetimes}

Let $H: \R^{n-1} \to \R$ be a smooth function. Then the metric
$$
 - H(t,y) \dd t^2 + 2 \dd t \dd x + dy^2
$$
on $\R_t\times \R_x \times \R^{n-2}_y$ is called pp-wave metric in Brinkmann form. If $H$ is harmonic in $y$ this metric is a vacuum solution of the Einstein equations. 
It admits a lightlike Killing vectorfield $\partial_x$. If $H$ does not depend on $t$ then also $\partial_t$ is a Killing vectorfield. In case $H$ is positive this Killing vectorfield is timelike.

Here we will consider a modified situation. Namely, let  $(S,h)$ be a $(n-2)$-dimensional Riemannian manifold and let $H \in C^\infty(\R \times S)$ be a positive smooth function.
Let us consider the space $\R^2_{t,x} \times S_y$ with metric
$$
 g = - H(x,y) \dd t^2 + 2 \dd t \dd x + h.
$$

The orthogonal complement of the timelike Killing vectorfield $\partial_t$ is spanned by the vectors 
$\partial_t + H \partial_x, \partial_{y_1},\ldots,\partial_{y_{n-2}}$. The distribution defined by these vectors is not in general integrable
if $H$ depends on $y$. Hence, generically this spacetime is not static.
Note that $\partial_x$ is lightlike but not a Killing field unless $H$ is independent of $x$. If $S$ is flat and $H$ independent of $x$ then this is locally a stationary pp-wave spacetime.
We will restrict ourselves to the case when $(S,h)$ is closed and $H$ is periodic in $x$ with period $L$. Suppose that $\alpha \in \R$ is fixed such that $\alpha > \frac{1}{2} H$ everywhere.
In this case the vector $\partial_t + \alpha \partial_x$ is spacelike and the 
map $(t,x,y) \to (t+L,x+\alpha L,y)$ generates a group $\Gamma$ of isometries. We
can consider the quotient $M=(\R^2 \times S)/\Gamma$. This quotient is a spatially compact stationary globally hyperbolic spacetimes with Killing vectorfield $\partial_t$. 
For example, $(t,\alpha t + c,y)_{c \in \R}$ provides a foliation by Cauchy surfaces.

One computes the wave operator in local coordinates
$$
 \Box_g  = -2 \partial_t \partial_x - (\partial_x H(x,y)) \partial_x - H(x,y) \partial^2_x - \Delta_h.
$$
Separation of variables shows that $\lambda$ is an eigenvalue of $D_Z$ if there exists a non-zero function $\psi(x,y)$ on $\R \times S$ such that
\begin{gather*}
 (-2 \rmi \lambda \partial_x -  (\partial_x H) \partial_x - H \partial^2_x - \Delta_h) \psi =0,\\
 \psi(x+ \alpha L,y) = \ee^{-\I \lambda L} \psi(x,y).
\end{gather*}

There is family of eigenvalues $\lambda = \frac{2 \pi m}{L}$ indexed by $m \in \Z$ with corresponding eigenfunctions $e^{\frac{2 \pi \rmi m}{L} t}$
independent of $x$ and $y$. These eigenvalues give rise to a singularity in the wave trace at integer multiples of $L$. Note that 
the coordinate curve $(0,s,0,0)$ is a lightlike geodesic and the assignmet $(t,x,y) \mapsto (t+L,x,y)$ maps this geodesic to itself. Hence, $L$ is the length of a primitive
periodic orbit of $\mathcal{N}$.

It is instructive to see what happens in the case of a stationary pp-wave, i.e. when $H$ is independent of $x$ and $(S,h)$ is a flat torus
$\R^{n-2}/\Lambda$, where $\Lambda$ is a cocompact lattice in $\R^{n-2}$. Then
$\lambda$ is an eigenvalue of $D_Z$ if and only if there exists an integer $m \in \Z$ such that zero is an eigenvalue of the operator
$-\Delta_y + V(\lambda,y,m)$, where $$V(\lambda,y,m)=\frac{(2\pi m + L \lambda)^2}{\alpha ^2 L^2} H(y) - \frac{2 \lambda}{\alpha L}(2 \pi m + L \lambda).$$
The lightlike geodesics that are not of the form $(0,s,0)$ can be parametrised as $(s,x(s),y(s))$ and then satisfy the equations of motion
$$
 \left( \begin{matrix} \ddot x \\ \ddot y \end{matrix} \right) = \left( \begin{matrix}  \dot y \cdot H'(y) \ \\ -\frac{1}{2} H'(y) \end{matrix} \right).
$$
The unique solution with future directed lightlike initial data $(1,\dot x_0, \dot y_0)$ at the point $(t,x_0,y_0)$ is $$(t+s, x_0 + \int_0^s (H(y(u)) -E  ) d u, y(s)),$$ where 
$(y(s),\dot y(s))$ is the flow along the Hamiltonian vector field of $\frac{\xi^2}{2} + W$ starting at $(y_0,\dot y_0)$ with energy $E= \frac{\dot y_0^2}{2} + W(y_0)$, and $W$ is the potential
$W(y)=\frac{1}{2} H(y)$. Therefore, for a trajectory to be periodic of period $T$ we must have that $y(s)$ is periodic classical trajectory on $T^*S$ for the Hamiltonian $\frac{1}{2} p^2 + W$ of energy $E$ and period $\ell$ such 
that there exists an integer $k \in \Z$ with
\begin{gather*}
 T = -\ell + k L,\\
 \int_0^T (H(y(s)) -E) ds = \alpha k L. 
\end{gather*}
For example, in case $k=0$ this singles out periodic orbits of length $\ell$ for which
$$
 \int_0^\ell (H(y(s)) -E) ds = 0.
$$
These are orbits for which the energy is twice the average of the potential along the trajectory. This means the initial velocity
is the same as the average of the potential along the orbit. Unlike semi-classical analysis that singles out an energy shell this is a non-local condition.

\section{\label{APPENDIX} Appendix on trace formula and symbol composition }

\subsection{\label{SYMBOLAPP}Symbol composition}
Let $V, W$ be symplectic vector spaces and let $\Gamma$ be a Lagrangian
subspace of $V \times W$. Let $\Lambda$ be a Lagrangian subspace of
$W$. Let 
\begin{equation}
\Gamma \circ \Lambda = \{v \in V \colon \text{there exists $(v,w) \in \Gamma$ with $w \in  \Lambda$} \}.
\end{equation}
Let $\pi \colon \Gamma \to W$ and $\rho \colon \Gamma \to V$ be the coordinate
projections. 
Let  $F = \{(a = (v,w) ,b = w) \in \Gamma \times \Lambda, \pi(a) =w  =\iota (b) \in W\}$ be
the fiber product.
Let $\alpha $ be the composite map $$\alpha: F \to \Gamma \stackrel{\rho}{\rightarrow} V, \qquad \alpha(v,w, b) = \rho(v,w)=v. $$

\begin{prop} \label{LINALG_c07} 
$\Gamma \circ \Lambda$ is a symplectic subspace of $W$,
and  there is a canonical isomorphism,
\begin{equation} \label{HALFDCLAIM_c07} 
 |\Lambda|^{\half} \otimes |\Gamma|^{\half} \simeq |\ker
\alpha| \otimes |\Gamma \circ \Lambda|^{\half}.
 \end{equation}
\end{prop}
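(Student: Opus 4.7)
The plan is to build the canonical isomorphism from two short exact sequences combined with a symplectic duality argument. First, identify the fiber product $F$ with the subspace $\Gamma \cap (V \times \Lambda) \subset \Gamma$ via $(a,b) \mapsto a$; then $\alpha$ is just the restriction to $F$ of the projection $\rho\colon \Gamma \to V$, and its image is $\Gamma \circ \Lambda$. A standard isotropy check, using that $\Gamma$ is Lagrangian in $V \times W$ and $\Lambda$ is Lagrangian in $W$, together with a dimension count, shows that $\Gamma \circ \Lambda$ is a Lagrangian subspace of $V$ (which I take to be what the proposition intends, the reference to ``symplectic subspace of $W$'' being a misprint).

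The first exact sequence
\begin{equation*}
0 \longrightarrow \ker\alpha \longrightarrow F \stackrel{\alpha}{\longrightarrow} \Gamma \circ \Lambda \longrightarrow 0
\end{equation*}
yields $|F|^{\half} \simeq |\ker\alpha|^{\half} \otimes |\Gamma \circ \Lambda|^{\half}$. The second sequence, built from $\beta(\gamma,\lambda) = \pi(\gamma) - \lambda$,
\begin{equation*}
0 \longrightarrow F \longrightarrow \Gamma \oplus \Lambda \stackrel{\beta}{\longrightarrow} \pi(\Gamma) + \Lambda \longrightarrow 0,
\end{equation*}
yields $|\Gamma|^{\half} \otimes |\Lambda|^{\half} \simeq |F|^{\half} \otimes |\pi(\Gamma)+\Lambda|^{\half}$. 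Combining these gives
\begin{equation*}
|\Lambda|^{\half} \otimes |\Gamma|^{\half} \simeq |\ker\alpha|^{\half} \otimes |\Gamma \circ \Lambda|^{\half} \otimes |\pi(\Gamma) + \Lambda|^{\half}.
\end{equation*}

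The step I expect to be the crux is converting $|\ker\alpha|^{\half} \otimes |\pi(\Gamma) + \Lambda|^{\half}$ into the \emph{full} density $|\ker\alpha|$ that appears on the right of (\ref{HALFDCLAIM_c07}). For this I invoke symplectic duality: since $\Gamma$ is Lagrangian, $\pi(\Gamma)^{\perp_W} = \{w \in W : (0,w) \in \Gamma\}$, so $\ker\alpha$ is canonically identified with $\Lambda \cap \pi(\Gamma)^{\perp_W}$. The identity $(A \cap B)^{\perp} = A^{\perp} + B^{\perp}$ together with $\Lambda^{\perp_W} = \Lambda$ gives $(\ker\alpha)^{\perp_W} = \pi(\Gamma) + \Lambda$, so $\omega_W$ descends to a perfect pairing $\ker\alpha \otimes W/(\pi(\Gamma) + \Lambda) \to \R$. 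Combining this with the symplectic-volume trivialization of $|W|$ applied to $0 \to \pi(\Gamma) + \Lambda \to W \to W/(\pi(\Gamma) + \Lambda) \to 0$ produces a canonical isomorphism $|\ker\alpha| \simeq |\pi(\Gamma) + \Lambda|$. Substituting into the preceding formula yields exactly (\ref{HALFDCLAIM_c07}).

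The hard part is not any one of the three canonical isomorphisms in isolation but rather ensuring that all of them are genuinely canonical (not merely abstract identifications of one-dimensional lines), and in particular that the appearance of the full density $|\ker\alpha|$ (rather than its square root) is forced precisely by the symplectic-duality step and is compatible with the sign and normalization conventions for symplectic volumes used elsewhere in the paper when composing symbols of Fourier integral operators.
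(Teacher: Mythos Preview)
Your proof is correct and follows essentially the same route as the paper's: the same first exact sequence, the same map $\tau=\beta$ for the second, and the same symplectic duality $(\ker\alpha)^{\perp_W}=\operatorname{Im}\tau$ combined with the canonical trivialization $|W|^{\half}\simeq\R$. The only cosmetic difference is that the paper writes the second sequence as the four-term sequence $0\to F\to\Gamma\times\Lambda\stackrel{\tau}{\to}W\to\operatorname{coker}\tau\to 0$ and then uses the duality to get $|\operatorname{coker}\tau|^{-\half}\simeq|\ker\alpha|^{\half}$ directly, whereas you truncate at $\operatorname{Im}\tau=\pi(\Gamma)+\Lambda$ and recover the cokernel contribution afterward via $0\to\pi(\Gamma)+\Lambda\to W\to W/(\pi(\Gamma)+\Lambda)\to 0$; the content is identical. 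Your observation that ``symplectic subspace of $W$'' should read ``Lagrangian subspace of $V$'' is also correct.
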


\begin{proof}
First we have the exact sequence
\begin{equation}\label{5.2}
0 \to \ker \alpha \to F \stackrel{\alpha}{\rightarrow} \Gamma \circ \Lambda \to 0,
\end{equation}
which implies
\begin{equation} \label{EX1_c07} 
|F|^{\half} \simeq |\Gamma \circ \Lambda|^{\half} \otimes |\ker \alpha|^{\half}.
\end{equation}

Define 
\begin{equation}
\tau \colon \Gamma \times \Lambda \to W, \qquad \tau( (v,w), b ) = \pi(v,w) - \iota (b) = w -b.
\end{equation}
 Then the following is an  exact sequence
\begin{equation}
0 \to F \to \Gamma \times \Lambda \stackrel{\tau}{\rightarrow} W \to \operatorname{coker} \tau \to 0, 
\end{equation} 
which implies
\begin{equation}
|F|^{-\half} \otimes |\Gamma|^{\half} \otimes |\Lambda|^{\half}
\otimes |W|^{-\half} \otimes |\operatorname{coker} \tau|^{\half} \simeq 1,
\end{equation}
hence
\begin{equation} \label{EX2_c07}
|F|^{\half} \otimes |W|^{\half} \otimes  |\operatorname{coker} \tau|^{-\half} 
 \simeq |\Gamma|^{\half} \otimes |\Lambda|^{\half}.
\end{equation}
 Combining \eqref{EX1_c07} and \eqref{EX2_c07} gives
 \begin{equation} \label{EX3_c07}
|\Gamma|^{\half} \otimes |\Lambda|^{\half}
 \simeq  |\Gamma \circ \Lambda|^{\half} \otimes |\ker \alpha|^{\half}  \otimes |W|^{\half} \otimes  |\operatorname{coker} \tau|^{-\half} .
\end{equation}

 To complete the proof we need to show that
 \begin{equation} \label{EX4_c07} 
|\operatorname{coker}\tau|^{-\half}  \simeq  |\ker \alpha|^{\half}.
\end{equation}
This follows from the fact that 
$\ker \alpha$ and $\operatorname{coker}\tau$ are dually paired by the symplectic
form on $W$, so that $(\ker \alpha)^{\perp} = \Im \tau$. Indeed, $\ker \alpha = \{(a = (v,w'),w) \in F \colon  \rho(a)  = v = 0\}$ and  $(a,w) \in F$ if and only if $w' = w$. Hence
$\ker \alpha \simeq  \{w \in  \Lambda \colon (0,w) \in \Gamma\}$.
On the other hand, if
$u \in \Im \tau$, then $u = w_2 - w_1$ with $(v_2, w_2) \in \Gamma$ and $w_1 \in \Lambda$. Now suppose that in the identification above $w \in \ker \alpha$,
and $u \in \Im \tau$. 
Then $ \Omega_W(w_1, w) = 0$ since $w_1, w \in \Lambda$ and $\Lambda$ is Lagrangian. Moreover,  $\Omega_W(w_2 ,w) = 0 $ since $\Gamma $ is Lagrangian in $V \times W$ and so $\{w \colon (0, w) \in \Gamma\}$ is isotropic
in $W$.  Hence,
$\Omega_W(w, u) = 0$. Since $\Gamma$ and $\Lambda$ are Lagrangian,
it follows that $(\ker \alpha)^{\perp} = \Im \tau$ in $W$. This implies
\eqref{EX4_c07}. Since $|W|^{\half} \simeq 1$ (i.e., there is a canonical choice of half-density),
 this proves \eqref{HALFDCLAIM_c07}.
\end{proof}

In the case where $\Gamma$ is the diagonal and $\Lambda $ is the graph of $P$, the exact
sequence \eqref{5.2} boils down to the exact sequence 
\begin{equation} \label{4.1} 0 \rightarrow \ker (I - \acal) \to  V \stackrel{I - \acal}{\rightarrow} V \to V/{\rm Im}(I - \acal) \to 0. \end{equation}


\begin{thebibliography}{HHHH}

\bibitem[A00]{A00} M. Anderson, On Stationary Vacuum Solutions to the Einstein Equations, Ann. Henri \Poincare (2000), 977-994.

%\bibitem[AM]{AM} Ashtekar, A.; Magnon, Anne Quantum fields in curved %space-times. Proc. Roy. Soc. London Ser. A  {\bf 346} (1975), no. 1646, %375--394.

%\bibitem[AIS]{AIS} S.J. Avis and C.J. Isham and D. Storey, Quantum field theory in anti-de Sitter space-time. Phys. Rev. D (3)  {\bf 18} (1978), no. 10, 3565--3576


%\bibitem[BS]{BS}  J. C. Baez,  I. E. Segal, and Z-F.  Zhou, 
%%The global Goursat problem and scattering for nonlinear wave equations. 
%J. Funct. Anal.  {\bf 93} (1990), no. 2, 239--269.

%\bibitem[SBZ]{SBZ} J.C. Baez and I.E.  Segal and Z.F. Zhou, Introduction to algebraic and constructive quantum field theory. Princeton Series in Physics. Princeton University Press, Princeton, NJ, 1992.

\bibitem[BGP]{BGP} C. B\"ar, N.  Ginoux and F. Pf\"affle,
Wave equations on Lorentzian manifolds and quantization. ESI
Lectures in Mathematics and Physics. European Mathematical Society
(EMS), Z\"urich, 2007.

\bibitem[BW]{BW} C. B\"ar, and R.T. Wafo, Math Phys. Anal. Geom. (2015) 18, 7.

\bibitem[B]{B} R. Bartolo, Periodic Trajectories on stationary
Lortentzian manifolds, Nonlinear Analysis  {\bf 43} (2001), 883-903.

%\bibitem[Bes32]{Bes32} A.S. Besicovitch, "Almost periodic functions", Cambridge Univ. Press (1932)


\bibitem[B00]{B00} H. Beyer, On the Stability of the Kerr Metric, Communications in Mathematical Physics. {\bf 221}, no. 3, 659--676. 

\bibitem[Be]{Be} P. B\'erard, On the wave equation without conjugate points, Math.\ Zeit.\ {\bf 155} (1977), 249--276.

%\bibitem[Bohr47]{Bohr47} H. Bohr , {\it Almost Periodic Functions}. Chelsea Publishing Company, New York, N.Y., 1947


\bibitem[BN94]{BN94} P. Binding and B. Najman, A Variational Principle in Krein Space, Transactions of the AMS {\bf 342} (1994), no. 2, 489--499.

\bibitem[BS]{BS}  A. N. Bernal and M. S\'anchez, Smoothness of time functions and the metric splitting of globally hyperbolic spacetimes,
Comm. Math. Phys. {\bf 257} (2005), no. 1, 43--50.

\bibitem[BS2]{BS2} A. Bernal and M.  S\'anchez, 
On smooth Cauchy hypersurfaces and Geroch's splitting theorem. Comm. Math. Phys. {\bf 243} (2003), no. 3, 461--470. 

%\bibitem[BW]{BW}  B. Booss-Bavnbek and K. P.  Wojciechowski, Elliptic %boundary problems for Dirac operators. Mathematics: Theory $\&$ %Applications. Birkhauser Boston, Inc., Boston, MA, 1993.

\bibitem[BDFK]{BDFK}
F. Bussola and C. Dappiaggi, Claudio and H. R.C. Ferreira and I. Khavkine,
 {Ground state for a massive scalar field in the BTZ spacetime with Robin boundary conditions}, {Phys. Rev. D}, {\bf 96}
 (2017), no. 10, 105016.


%\bibitem[C]{C} A.M. Candela: Lightlike periodic trajectories in space-times, Ann. Mat. Pura Appl., CLXXI (1996) 131-158.

%\bibitem[CGT]{CGT} Y. Canzani, J.  Galkowski and J.A.  Toth, Averages of eigenfunctions over hypersurfaces. Comm. Math. Phys. 360 (2018), no. 2, 619-637.


\bibitem[CN10]{CN10}
V. Chernov and S. Nemirovski: Legendrian links, causality, and the Low conjecture, {Geometric and Functional Analysis},{\bf 19}, no. 5 (2010), 1320--1333.



\bibitem[CN95]{CN95} B. Curgus and B. Najman, Quadratic Eigenvalue Problems, Math. Nachr. {\bf 174} (1995), 55-64.

\bibitem[Ch74]{Ch74} J. Chazarain, 
Formule de Poisson pour les vari\'et\'es riemanniennes.
Invent. Math. {\bf 24} (1974), 65-82. 

\bibitem[CM16]{CM16} J. Cortier and V. Minerbe, On complete stationary vacuum initial data, Journal of Geometry and Physics {\bf 99} (2016), 20-27.


\bibitem[DG13]{DG13} J. Derezinski and C. G\'erard, Mathematics of Quantization and Quantum Fields, Cambridge Monographs on Mathematical Physics, 2013.

\bibitem[DeW]{DeW} B. DeWitt,  Quantum Field theory on a curved spacetime, Phys. Reps.  {\bf 19} (6) (1975), 295-357. 

\bibitem[DR96]{DR96} M. Dritschel and J. Rovnyak, Operators on indefinite inner product spaces,
Lectures on operator theory and its applications 3, 141--232.

\bibitem[DG75]{DG75} J.J.Duistermaat and V.Guillemin, The spectrum of positive
elliptic operators and periodic bicharacteristics, Inv.Math.  {\bf  24}
(1975), 39-80.

\bibitem[DH]{DH} J. J. Duistermaat and L.  H\"ormander, Fourier integral operators. II.
Acta Math.  {\bf 128} (1972), no. 3-4, 183-269.

\bibitem[ER08]{ER08} C. Engstrom and M. Richter, On the Spectrum of an Operator Pencil with Applications to Wave Propagation in Periodic and Frequency Dependent Materials, SIAM Journal on Applied Mathematics {\bf 70} (2009), no. 1, 231--247.

%\bibitem[FGM]{FGM} D. Fortunato, F.  Giannoni and A. Masiello,
%A Fermat principle for stationary space-times and applications to
%light rays. J. Geom. Phys.  {\bf 15} (1995), no. 2, 159--188

\bibitem[F]{F} F. G. Friedlander, The wave equation on a curved space-time,
Cambridge Univ. Press, Cambridge, 1975.

%\bibitem[F2]{F2} F. G.  Friedlander, 
%Radiation fields and hyperbolic scattering theory. 
%Math. Proc. Cambridge Philos. Soc.  {\bf 88} (1980), no. 3, 483–515.

\bibitem[FrG91]{FrG91} J.P. Francoise and V.  Guillemin, 
On the period spectrum of a symplectic mapping.
J. Funct. Anal.  {\bf 100} (1991), no. 2, 317-358.

\bibitem[FNW81]{FNW} S.A. Fulling and F.J. Narcowich and R. M. Wald, {\it Singularity structure of the two-point function in quantum field theory in curved spacetime, II}, Annals of Physics, {\bf 136} (1981), no.2, 243--272.

\bibitem[FSW78]{FSW78} S.A. Fulling and M. Sweeny and R. M. Wald,
Singularity structure of the two-point function quantum field theory in curved spacetime.
Comm. Math. Phys.  {\bf 63} (1978), no. 3, 257--264.

\bibitem[F08]{F08} D. Fursaev, Spectral Asymptotics of Eigenvalue Problems with Nonlinear Dependence on the Spectral Parameter,  Classical and Quantum Gravity,
{\bf 19} (2002), no. 14, 3635. 

%\bibitem[GW14]{GW14} C. Gerard and M. Wrochna, Construction of Hadamard states by pseudo-differential calculus. Comm. Math. Phys.  {\bf 325} (2014), no. 2, 713--755.

%\bibitem[GW17]{GW17} C. Gerard, Christian and M. Wrochna, Construction of Hadamard states by characteristic Cauchy problem. Anal. PDE  {\bf 9} (2016), no. 1, 111--149

\bibitem[GGH17]{GGH17} V. Georgescu, C. G\'erard, D. H\"afner, Resolvent and Propagation Estimates for Klein-Gordon Equations with Non-positive Energy,  IF
PREPUB. 2013.

\bibitem[GW17]{GW17} C. G\'erard and M. Wrochna, {\it Analytic Hadamard states, Calderon projectors and Wick rotation near analytic Cauchy surfaces},
arXiv:1609.00192, 2017.

\bibitem[G]{G} R. Geroch,
Domain of dependence. J. Mathematical Phys.  {\bf 11} (1970), 437--449.

%\bibitem[GM]{GM} F. Giannoni and A.  Masiello,
%On the existence of geodesics on stationary Lorentz manifolds with
%convex boundary. J. Funct. Anal.  {\bf 101} (1991), no. 2, 340--369.

\bibitem[G74]{G74} W.M. Greenlee, Double Unconditional Bases Associated with a Quadratic Characteristic Parameter Problem, Journal of Functional Analysis {\bf 15} (1974), 306--339.

\bibitem[Gu85]{Gu85} V. Guillemin,
A New Proof of Weyl's Formula
on the Asymptotic Distribution of Eigenvalues. Advances in Mathematics {\bf 55} (1985), 131--160.

%\bibitem[Gu]{Gu} V. Guillemin,
%Cosmology in $(2 + 1)$(2+1)-dimensions, cyclic models, and
%deformations of $M_{2,1}$. Annals of Mathematics Studies, 121.
%Princeton University Press, Princeton, NJ, 1989.

%\bibitem[GS]{GS} V. Guillemin and S. Sternberg, Some problems in
%integral geometry and some related problems in microlocal
%analysis, Amer. J. Math.  {\bf 101} (1979).

%\bibitem[GU]{GU} V. Guillemin and A. Uribe, Reduction and the
%trace formula, J. Diff. Geom.  {\bf 32} (1990), 315-347.

\bibitem[Gun]{Gun} P. G\"unther,
Huygens' principle and hyperbolic equations. With appendices by V.
Wunsch. Perspectives in Mathematics, 5. Academic Press, Inc.,
Boston, MA, 1988.

\bibitem[Gutz71]{Gutz71} M. C. Gutzwiller, Periodic Orbits and Classical Quantization Conditions, J. Math. Phys.  {\bf 12} (1971), 343–358]

\bibitem[H1]{H1} J. Hadamard, {\it Lectures on Cauchy's problem in linear partial differential equations}. Dover Publications, New York, 1953.

\bibitem[H2]{H2} J. Hadamard,  Th\'eorie des Equations aux d\'eriv\'ees partielles lin\'eaires hyperboliques et du probl\`eme de Cauchy.
Acta Math.  {\bf 31} (1908), no. 1, 333--380.

% \bibitem[Ha92]{Ha92} S. Harris, Conformally stationary spacetimes,
% Class. Quantum Grav.  {\bf 9} (1992), 1823-1827.

\bibitem[HE]{HE} S. Hawking and G.F. R.  Ellis,
{\it The Large Scale Structure of Space-Time}. Cambridge:
Cambridge University Press (1973).

\bibitem[Ho1]{Ho1} L. H\"ormander,
A remark on the characteristic Cauchy problem. J. Funct. Anal.  {\bf 93}
(1990), no. 2, 270--277.

\bibitem[Ho2]{Ho2} L. H\"ormander,
The spectral function of an elliptic operator. Acta Math. Volume  {\bf 121} (1968), 193-218.

\bibitem[HoI-IV]{HoI-IV}  L. H\"ormander, {\it Theory of
Linear Partial Differential Operators I-IV}, Springer-Verlag, New
York (1985).

%\bibitem[HT]{HT} Huggett, S. A.; Tod, K. P. {\it An introduction to twistor %theory.}
% Second edition. London Mathematical Society Student Texts, 4. %Cambridge University Press, Cambridge, 1994.

\bibitem[JS]{JS} M. A. Javaloyes and M. Sanchez, 
A note on the existence of standard splittings for conformally stationary spacetimes. 
Classical Quantum Gravity  {\bf 25} (2008), no. 16, 168001, 7 pp

\bibitem[Kl]{Kl}  W. Klingenberg, {\it Lectures on closed geodesics.} Grundlehren der Mathematischen Wissenschaften, Vol. 230. Springer-Verlag, Berlin-New York, 1978.


\bibitem[KT07]{KT07} B. Khesin and S. Tabachnikov, Pseudo-Riemannian geodesics and billiards, Advances in Mathematics {\bf 221} (2009), no. 4, 1364--1396.

\bibitem[L82]{L82} H. Langer, Spectral functions of definitizable operators in Krein spaces, Springer Lecture Notes in Math. {\bf 148}(1982), 1--46.

\bibitem[LNT06]{LNT06} H. Langer and B. Najman and C. Tretter, Spectral Theory of the Klein-Gordon Equation
in Pontryagin Spaces, Commun. Math. Phys. {\bf 267} (2006), 159.

\bibitem[LNT08]{LNT08}
H. Langer and B. Najman, B. and C. Tretter. Spectral Theory of the Klein-Gordon Equation in Krein Spaces. Proceedings of the Edinburgh Mathematical Society, {\bf 51} (2008), no. 3, 711--750.

\bibitem[L98]{L98} R. Low,  Stable singularities of wave-fronts in general relativity, Journal of Mathematical Physics, {\bf 39} (1998) 3332.

\bibitem[L01]{L01} R. Low, The space of null geodesics. Nonlinear Analysis: Theory, Methods \& Applications, {\bf 47}, no. 5, (2001), 3005--3017.

\bibitem[M92]{M92}  E. Meinrenken, 
Semiclassical principal symbols and Gutzwiller's trace formula. 
Rep. Math. Phys.  {\bf 31} (1992), no. 3, 279-295. 

\bibitem[M94]{M94}  E. Meinrenken, 
Trace formulas and the Conley-Zehnder index.
J. Geom. Phys. {\bf 13} (1994), no. 1, 1-15. 

\bibitem[N66]{N66} M. A. Naimark, Analog of Stone's theorem for a space
with an indefinite metric, Dokl. Akad. Nauk SSSR,
{\bf 170} (1966), no 6. , 1259-1261

% \bibitem[ON]{ON} B. O'Neill, {\it Semi-Riemannian Geometry with
% Applications to Relativity}, Ac. Press. New York-London (1983).

%\bibitem[P]{P} Paneitz, Stephen M.
%Essential unitarization of symplectics and applications to field quantization. 
%J. Funct. Anal.  {\bf 48} (1982), no. 3, 310–359

\bibitem[PS]{PS} S.M. Paneitz and E.E.Segal, Quantization of wave equations and Hermitian structures in partial differential varieties. Proc. Nat. Acad. Sci. U.S.A.  {\bf 77} (1980), no. 12, part 1, 6943--6947

% \bibitem[P]{P} Penrose, R.
%Twistor algebra. J. Mathematical Phys.  {\bf 8} (1967), 345--366.

\bibitem[Rad96]{Rad96}  M. J. Radzikowski, Micro-local approach to the Hadamard condition in quantum field theory on curved space-time. Comm. Math. Phys.  {\bf 179} (1996), no. 3, 529--553. 

%\bibitem[RSII]{RSII} M. Reed and B. Simon, Methods of modern mathematical physics. II. Fourier analysis, self-adjointness. Academic Press [Harcourt Brace Jovanovich, Publishers], New York-London, 1975.


\bibitem[R1]{R1} M. Riesz, L'int\'egrale de Riemann-Liouville et le probleme de Cauchy.  Acta Math.  {\bf 81} (1949). 1--223.

\bibitem[R2]{R2} M. Riesz,
A geometric solution of the wave equation in space-time of even
dimension. Comm. Pure Appl. Math.  {\bf 13} (1960), 329--351.

\bibitem[R91]{R91} J.M. Robbins, Maslov indices in the Gutzwiller trace
formula, Nonlinearity   4 (1991) 343-363.


%\bibitem[S]{S} M. S\'anchez,  Timelike periodic trajectories in spatially %compact Lorentz manifolds. Proc. Amer. Math. Soc.  {\bf 127} (1999), no. %10, 3057--3066.

\bibitem[SVW02]{SVW02} A. Strohmaier and R. Verch and M. Wollenberg, Microlocal analysis of quantum fields on curved space-times: Analytic wave front sets and {Reeh-Schlieder} theorems, Journal of Mathematical Physics, {\bf 43} (2002), no. 11, 5514-5530 2002

\bibitem[SZ19]{SZ19} A. Strohmaier and S. Zelditch, Trace  formula and scattering for stationary small perturbations of Minkowski spacetime, in preparation.

\bibitem[SZ20]{SZ20} A. Strohmaier and S. Zelditch, Semi-classical mass asymptotics on stationary spacetimes, Indagationes Mathematicae, 2020, in print, doi = https://doi.org/10.1016/j.indag.2020.08.010.


\bibitem[T]{T2} M. E. Taylor,
 Pseudodifferential Operators. Princeton Mathematical Series, 34, Princeton University Press, 1981.

%\bibitem[T]{T} M. E. Taylor,
% Partial differential equations. III. Nonlinear equations. Corrected reprint of the 1996 original. Applied Mathematical Sciences, 117. Springer-Verlag, New York, 1997

\bibitem[TZ13]{TZ13}  J. A. Toth and S.  Zelditch,  Quantum ergodic restriction theorems: manifolds without boundary. Geom. Funct. Anal. 23 (2013), no. 2, 715-775.

%\bibitem[U]{U} K. Uhlenbeck,
%A Morse theory for geodesics on a Lorentz manifold. Topology  {\bf 14}
%(1975), 69--90.
%
%\bibitem[W]{W} R. M. Wald,  General Relativity. Chicago: University
%of Chicago Press (1984).

%\bibitem[Z]{Z} S.  Zelditch,
%On a "quantum chaos'' theorem of R. Schrader and M. Taylor, J.
%Funct. Anal.  {\bf 109} (1992), no. 1, 1--21.

%\bibitem[Z2]{Z2} S. Zelditch, A relativistic quantum ergodic
%theorem for stationary spacetimes (in preparation).

\end{thebibliography}
\end{document}